\newcommand{\nc}{\newcommand}
\nc{\one}{\mbox{\bf 1}}
\nc{\invtensor}{\underset{\leftarrow}{\otimes}}
\nc{\const}{\operatorname{const}}
\nc{\re}{\operatorname{re}}
\nc{\ad}{\operatorname{ad}}
\nc{\ev}{\operatorname{ev}}
\nc{\tr}{\operatorname{tr}}
\nc{\Gr}{\mathscr{K}}
\nc{\rGr}{\operatorname{rGr}}
\nc{\atyp}{\operatorname{atyp}}
\nc{\tp}{\operatorname{top}}
\nc{\rank}{\operatorname{rank}}
\nc{\corank}{\operatorname{corank}}
\nc{\codim}{\operatorname{codim}}
\nc{\sdim}{\operatorname{sdim}}
\nc{\mult}{\operatorname{mult}}
\nc{\ds}{\operatorname{ds}}
\nc{\tail}{\operatorname{tail}}
\nc{\howl}{\operatorname{howl}}
\nc{\triv}{\operatorname{triv}}
\nc{\spn}{\operatorname{span}}
\nc{\Sym}{\operatorname{Sym}}
\nc{\Core}{\operatorname{Core}}
\nc{\id}{\operatorname{id}}
\nc{\Id}{\operatorname{Id}}
\nc{\Ree}{\operatorname{Re}}
\nc{\hi}{\operatorname{hi}}
\nc{\htt}{\operatorname{ht}}
\nc{\at}{\operatorname{at}}
\nc{\str}{\operatorname{str}}
\nc{\Iso}{\operatorname{Iso}}
\nc{\Ker}{\operatorname{Ker}}
\nc{\rker}{\operatorname{rKer}}
\nc{\im}{\operatorname{Im}}
\nc{\osp}{\mathfrak{osp}}
\nc{\sgn}{\operatorname{sgn}}
\nc{\F}{\operatorname{F}}
\nc{\Mod}{\operatorname{Mod}}
\nc{\DS}{\operatorname{DS}}
\nc{\Soc}{\operatorname{Soc}}
\nc{\Inj}{\operatorname{Inj}}
\nc{\Hom}{\operatorname{Hom}}
\nc{\End}{\operatorname{End}}
\nc{\supp}{\operatorname{supp}}
\nc{\smult}{\operatorname{smult}}
\nc{\Dyn}{\operatorname{Dyn}}
\nc{\Card}{\operatorname{Card}}
\nc{\Ann}{\operatorname{Ann}}
\nc{\Arc}{\operatorname{Arc}}
\nc{\arc}{\operatorname{arc}}
\nc{\Ind}{\operatorname{Ind}}
\nc{\Coind}{\operatorname{Coind}}
\nc{\wt}{\operatorname{hwt}}
\nc{\hwt}{\operatorname{wt}}
\nc{\rk}{\operatorname{rank}}
\nc{\ch}{\operatorname{ch}}
\nc{\sch}{\operatorname{sch}}
\nc{\mdim}{\operatorname{mdim}}
\nc{\Stab}{\operatorname{Stab}}
\nc{\hull}{\operatorname{hull}}
\nc{\Irr}{\operatorname{Irr}}
\nc{\Spec}{\operatorname{Spec}}
\nc{\Res}{\operatorname{Res}}
\nc{\res}{\operatorname{res}}
\nc{\Aut}{\operatorname{Aut}}
\nc{\Ext}{\operatorname{Ext}}
\nc{\Prec}{\operatorname{Prec}}
\nc{\Fract}{\operatorname{Fract}}
\nc{\gr}{\operatorname{gr}}
\nc{\vol}{\operatorname{vol}}
\nc{\diag}{\operatorname{diag}}
\nc{\deff}{\operatorname{def}}
\nc{\core}{\operatorname{core}}
\nc{\HC}{\operatorname{HC}}
\nc{\Ch}{\operatorname{Ch}}
\nc{\Sch}{\operatorname{Sch}}
\nc{\dpth}{\operatorname{dpth}}
\nc{\Pol}{\operatorname{Pol}}
\nc{\pari}{\operatorname{par}}
\nc{\pos}{\operatorname{pos}}
\nc{\Howl}{\operatorname{Subq}}
\nc{\Cl}{\mathcal{C}\ell}
\nc{\wdchi}{\widetilde{\chi}}
\nc{\wdH}{\widetilde{H}}
\nc{\wdN}{\widetilde{N}}
\nc{\wdM}{\widetilde{M}}
\nc{\wdO}{\widetilde{O}}
\nc{\wdR}{\widetilde{R}}
\nc{\wdV}{\widetilde{V}}
\nc{\wdC}{\widetilde{C}}
\nc{\zero}{\operatorname{zero}}
\nc{\nonzero}{\operatorname{nonzero}}
\nc{\Nonzero}{\operatorname{Nonzero}}
\nc{\diam}{\operatorname{diam}}
\nc{\Obj}{\operatorname{Obj}}
\nc{\Dglie}{\operatorname{{\mathcal D}glie}}
\nc{\Fin}{\operatorname{{\mathcal F}in}}
\nc{\pr}{\operatorname{pr}}
\nc{\Adm}{\operatorname{\mathcal{A}dm}}
\nc{\fg}{\mathfrak g}
\nc{\Sg}{{\cS(\fg)}}
\nc{\Shg}{{\cS(\fhg)}}
\nc{\Ug}{{\cU(\fg)}}
\nc{\Uhg}{{\cU(\fhg)}}
\nc{\Sh}{{\cS(\fh)}}
\nc{\Uh}{{\cU(\fh)}}
\nc{\Uhh}{{\cU(\fhh)}}
\nc{\Zg}{{{\mathcal{Z}}(\fg)}}
\nc{\Vir}{{\mathcal{V}ir}}
\nc{\NS}{{\mathcal{N}S}}
\nc{\tZg}{{\widetilde{\mathcal Z}({\mathfrak g})}}
\nc{\Zk}{{\mathcal Z}({\mathfrak k})}
\nc{\Up}{{\mathcal U}({\mathfrak p})}
\nc{\Ah}{{\mathcal A}({\mathfrak h})}
\nc{\Ag}{{\mathcal A}({\mathfrak g})}
\nc{\Ap}{{\mathcal A}({\mathfrak p})}
\nc{\Zp}{{\mathcal Z}({\mathfrak p})}
\nc{\cR}{\mathcal R}
\nc{\cS}{\mathcal S}
\nc{\cP}{\mathcal P}
\nc{\cT}{\mathcal{T}}
\nc{\cC}{\mathcal C}
\nc{\cJ}{\mathcal J}
\nc{\cA}{\mathcal A}
\nc{\cV}{\mathcal V}
\nc{\cU}{\mathcal U}
\nc{\cZ}{\mathcal Z}
\nc{\cM}{\mathcal M}
\nc{\cL}{\mathcal L}
\nc{\cF}{\mathcal F}
\nc{\cB}{\mathcal{B}}
\nc{\fo}{\mathfrak o}
\nc{\fa}{\mathfrak a}
\nc{\fz}{\mathfrak z}
\nc{\CO}{\mathcal O}
\nc{\CR}{\mathcal R}
\nc{\cK}{\mathcal{K}}
\nc{\cW}{\mathcal{W}}
\nc{\bM}{\mathbf{M}}
\nc{\bL}{\mathbf{L}}
\nc{\bN}{\mathbf{N}}
\nc{\zq}{\mathpzc q}
\nc{\fl}{\mathfrak l}
\nc{\fn}{\mathfrak n}
\nc{\fm}{\mathfrak m}
\nc{\fp}{\mathfrak p}
\nc{\fh}{\mathfrak h}
\nc{\ft}{\mathfrak t}
\nc{\fk}{\mathfrak k}
\nc{\fb}{\mathfrak b}
\nc{\fs}{\mathfrak s}
\nc{\fB}{\mathfrak B}
\nc{\vareps}{\varepsilon}
\nc{\varesp}{\varepsilon}
\nc{\veps}{\varepsilon}
\nc{\fsl}{\mathfrak{sl}}
\nc{\fgl}{\mathfrak{gl}}
\nc{\fso}{\mathfrak{so}}
\nc{\fosp}{\mathfrak{osp}}
\nc{\fsp}{\mathfrak{sp}}
\nc{\fq}{\mathfrak q}
\nc{\fsq}{\mathfrak{sq}}
\nc{\fpsq}{\mathfrak{psq}}
\nc{\fpq}{\mathfrak{pq}}
\nc{\fhg}{\hat{\fg}}
\nc{\fhn}{\hat{\fn}}
\nc{\fhh}{\hat{\fh}}
\nc{\fhb}{\hat{\fb}}
\nc{\hrho}{\hat{\rho}}
\nc{\hsl}{\hat{\fsl}}
\nc{\fpo}{\mathfrak{po}}
\nc{\dirlim}{\underset{\rightarrow}{\lim}\,}
\nc{\nen}{\newenvironment}
\nc{\ol}{\overline}
\nc{\ul}{\underline}
\nc{\ra}{\rightarrow}
\nc{\lra}{\longrightarrow}
\nc{\Lra}{\Longrightarrow}
\nc{\bo}{\bar{1}}
\nc{\Lla}{\Longleftarrow}
\nc{\Llra}{\Longleftrightarrow}
\nc{\thla}{\twoheadleftarrow}
\nc{\lang}{(}
\nc{\rang}{)}
\nc{\hra}{\hookrightarrow}
\nc{\iso}{\overset{\sim}{\lra}}
\nc{\ssubset}{\underset{\not=}{\subset}}
\nc{\vac}{|0\rangle}
\nc{\simka}{{\ \scriptscriptstyle _{{\sim}}^\text{\tiny{k}}\ }}
\nc{\Thm}[1]{Theorem~\ref{#1}}
\nc{\Prop}[1]{Proposition~\ref{#1}}
\nc{\Lem}[1]{Lemma~\ref{#1}}
\nc{\Cor}[1]{Corollary~\ref{#1}}
\nc{\Conj}[1]{Conjecture~\ref{#1}}
\nc{\Claim}[1]{Claim~\ref{#1}}
\nc{\Defn}[1]{Definition~\ref{#1}}
\nc{\Exa}[1]{Example~\ref{#1}}
\nc{\Rem}[1]{Remark~\ref{#1}}
\nc{\Note}[1]{Note~\ref{#1}}
\nc{\Quest}[1]{Question~\ref{#1}}
\nc{\Hyp}[1]{Hypoth\`ese~\ref{#1}}
\begin{document}
\setcounter{section}{0}
\title{On simplicity of universal minimal $W$-algebras}
\author{Maria Gorelik}
\address{M.G.: Department of Mathematics, Weizmann Institute of Science, Rehovot 761001, Israel; 
maria.gorelik@weizmann.ac.il}
\author{Victor G.~Kac}
\address{V. K.:  Department of Mathematics, MIT, 77 Mass. Ave, Cambridge, MA 02139;
kac@mit.edu}

\begin{abstract} 
	Simplicity of universal minimal quantum affine W-algebras is
studied. As an application, we find the values of the center, for which the
vacuum module over a
superconformal algebra is irreducible.
\end{abstract}


\medskip

\keywords{Affine Lie superalgebra, W-algebra,superconformal algebra, Jantzen filtration.}

%
%
%

%
%

\maketitle

\section{Introduction}
In the present paper we study the simplicity of universal minimal quantum affine $W$-algebras $W^k_{\min}(\dot{\fg})$ that we
began in~\cite{GKvac}. Here $\dot{\fg}$ is a finite-dimensional simple Lie superalgebra  over $\mathbb{C}$
with a reductive even part $\dot{\fg}_{\ol{0}}$, admitting a non-degenerate invariant bilinear form $(.\, ,.\, )$
whose restriction to $\dot{\fg}_{\ol{0}}$ is non-degenerate. 
Recall that $W^k_{\min}(\dot{\fg})$ is  a vertex algebra obtained by the quantum Hamiltonian reduction, associated to a minimal
$\fsl_2$-subalgebra of $\dot{\fg}_{\ol{0}}$, of the universal affine vertex algebra 
$V^k=V^k(\dot{\fg})$~\cite{KW}, where $V^k$ is the vacuum module of level $k\in\mathbb{C}$
over the affinization $\fg$  of  $\dot{\fg}$. Recall that a minimal
$\fsl_2$-subalgebra $\fs=span\{f,x,e\}$, where $[x,f]=-f$, $[x,e]=e$, $[e,f]=x$ of $\dot{\fg}$, is defined
by the property that the $\ad x$-eigenspaces decomposition of $\dot{\fg}$ has the form 
\begin{equation}\label{5.1}
\dot{\fg}=\dot{\fg}_{-1}\oplus \dot{\fg}_{-1/2}\oplus \dot{\fg} _0\oplus \dot{\fg}_{1/2}\oplus \dot{\fg}_{1},\ \ \text{ where }\ \
\dot{\fg}_{1}=\mathbb{C}e,\ \ \dot{\fg}_{-1}=\mathbb{C}f.\end{equation}
In this case the adjoint orbit of $e$ in $\dot{\fg}_{\ol{0}}$ has minimal non-zero dimension in its simple component,
hence the name ``minimal''.
As in~\cite{KW}, we normalize the bilinear form $(.\, ,.\, )$ by the condition $(\theta,\theta)=2$, 
where $\theta$ is  the root of $e$. Let $h^{\vee}$ be the dual Coxeter number of $\dot{\fg}$
(=$\frac{1}{2}$ eigenvalue of the Casimir element in the adjoint representation). Its values can be found in [KW2, Tables 1-3].

Let $\dot{\fh}$ be a Cartan subalgebra of the even part of $\dot{\fg}_0$ (=Cartan
subalgebra of $\dot{\fg}$), let $\dot{\Delta}\subset \dot{\fh}^*$ be the set of roots of $\dot{\fg}$, and
$\dot{\Delta}_{\ol{0}}$ be the subset of even roots. Set
$$\dot{\Delta}^{\#}:=\{\alpha\in \dot{\Delta}_{\ol{0}}|\ (\alpha,\alpha)\in\mathbb{R}_{>0}\}.$$
We call
the minimal $\fsl_2$-subalgebra $\fs$ of $\dot{\fg}$, the corresponding normalization of $(.\,,.\,)$, and the 
corresponding minimal $W$-algebra $W^k_{\min}(\dot{\fg})$ of  {\em unitary type} if
$\dot{\Delta}^{\#}=\{\pm\theta\}$, and of {\em  non-unitary type} otherwise.
The minimal $W$-algebra depends on the choice of $\fs$ (so, a more adequate notation for it would be
$W^k_{\min}(\dot{\fg},\fs)$), but the type determines it uniquely.

The gradings~(\ref{5.1}) for all minimal $\fs$ in $\dot{\fg}$ are listed in ~[KW2, Tables 1-3], and those of unitary type are contained in Table 2.
Recall that Table 2 is characterized by the property that the decompostion~(\ref{5.1}) 
is consistent with parity of $\dot{\fg}$, i.e. 
$p(\dot{\fg}_j)\equiv 2j\ \mod 2$. However there are two cases in Table 2, which are not of unitary type:
they are $\dot{\fg}=\fosp(4|m)$ with even $m>2$ and $D(2|1,a)$  with $a\not\in\mathbb{R}_{>0}$.
All $\dot{\fg}$ with minimal $\fs$ of unitary type are as follows ($m\in\mathbb{Z}_{\geq 0}$):
\begin{equation}\label{list1}
\fsl(2|m)\ \text{ with } m\not=2, \ \mathfrak{spo}(2|m),\ \mathfrak{psl}(2|2),   \ D(2|1,a)\ \text{ with } a\in\mathbb{R}_{>0}, \ G(3),\ F(4).
\end{equation}
According to~\cite{KMFP},  the vertex algebra $W^k_{\min}(\dot{\fg})$ for some $k\not=-h^{\vee}$ is unitary  if and only if
the minimal $\fs$ is of unitary type (except that $a\in\mathbb{Q}$). Throughout the paper, as in~\cite{KW},
we use notation $\mathfrak{spo}(m|n)$ (resp., $\fosp(m|n)$) if the bilinear form 
$( .\, , .\,)$ is normalized in such a way that the square length of even roots of $\mathfrak{sp}_m$
(resp., of $\mathfrak{so}_m$) are positive; for $\fsl(m|n)$ they are positive for $\fsl_m$.
For $\dot{\fg}=D(2|1,a)$, $a\not=0,-1$,  the bilinear form 
$( .\, , .\,)$ is normalized such that the
square lengths
of even roots are $2, -2/(1+a)$, and $-2a/(1+a)$.

By~\cite{GKvac}, Thm. 9.1.2, 
the minimal $W$-algebra $W^k_{\min}(\dot{\fg})$ with $k\not=-h^{\vee}$  is simple if and only if either $V^k$ 
is an irreducible $\fg$-module (equivalently, $V^k$ is a simple vertex algebra), or $k\in\mathbb{Z}_{\geq 0}$
and the $\fg$-module $V^k$ has length two (equivalently, $V^k$ has a unique non-zero proper submodule).

The criterion of irreducibility in terms of $k$ of the $\fg$-module $V^k$ was established in~\cite{GKvac}, \cite{HRvac}: 
\begin{itemize}
\item if  $\dot{\fg}$ is a Lie algebra, then
$V^k$ is irreducible if and only if 
$l(k+h^{\vee})\not\in \mathbb{Q}_{\geq 0}\setminus\{\frac{1}{m}\}_{m=1}^{\infty}$
where $l$ is the lacety of $\dot{\fg}$ ($l=3$ for $G_2$, $l=2$ for $B_n,C_n, F_4$, $l=1$ otherwise);
\item if $\dot{\fg}=\fosp(1|2n)$, then
$V^k$ is irreducible if and only if 
$2(k+h^{\vee})\not\in \mathbb{Q}_{\geq 0}\setminus\{\frac{1}{2m-1}\}_{m=1}^{\infty}$;
\item in all other cases (i.e., when $\dot{\fg}$ has non-zero defect), 
$V^k$ is irreducible if and only if $\frac{k+h^{\vee}}{(\alpha,\alpha)}\not\in\mathbb{Q}_{\geq 0}$  for all
roots $\alpha$ of $\dot{\fg}_{\ol{0}}$.
\end{itemize}

In the present paper we study the case $k\in\mathbb{Z}_{\geq 0}$ (then $V^k$ is not irreducible).
 In the first part (Section~\ref{smallones}) we study the most interesting from physics 
viewpoint unitary type cases.

We prove that for $\dot{\fg}$ as in~(\ref{list1}), with the normalization of $( .\, ,. \,)$ of unitary type, 
and $k\in\mathbb{Z}_{\geq 0}$, the $\fg$-module $V^k$ has length $2$, provided that the following inequality holds
\begin{equation}\label{3}
k\geq \frac{5}{2}-2h^{\vee}.\end{equation}
It follows that for $k\in\mathbb{Z}_{\geq 0}$ the vertex algebra  $W^k_{\min}(\dot{\fg})$ is simple  in the following cases
$$
\begin{array}{ |c||c|c| c|c|c|c|} 
 \hline
 \dot{\fg} &\fsl_2,\mathfrak{spo}(2|1)  & \fsl(2|m), m\not=2 &   \mathfrak{spo}(2|m), m\geq 3 & 
 \mathfrak{psl}(2|2), D(2|1;a) &  G(3) & F(4) \\
\hline
h^{\vee} & 2,\frac{3}{2} & 2-m & 2-\frac{m}{2} & 0, 0& -\frac{3}{2} & -2\\
\hline
 k  &  k\in\mathbb{Z}_{\geq 0} & k\in\mathbb{Z}_{\geq 2m-1}&  k\in\mathbb{Z}_{\geq m-1}& k\in\mathbb{Z}_{\geq 3}&  k\in\mathbb{Z}_{\geq 6}&  k\in\mathbb{Z}_{\geq 7}\\
 \hline
\end{array}
$$

%


Recall that the central charge $c$ of $W^k_{\min}(\dot{\fg})$ is given by the following formula~\cite{KW}
$$c=c(k)=\frac{k\sdim\dot{\fg}}{k+h^{\vee}}-6k+h^{\vee}-4.$$

As a corollary we obtain the following results on irreducibility of vacuum modules
over superconformal algebras with central charge $c$, which is an improvement and correction of~\cite{GKvac}, 
Corollary 9.1.5.

(Recall that a superconforml algebra $\fg$ is a central extension of a simple Lie superalgebra, generated
by a Virasoro subalgebra and coefficients of odd fields of conformal weight $\frac{1}{2}$, of 
 even fields of conformal weight $1$, and $N$ odd fields
of confomral weight $\frac{3}{2}$;  the vacuum module over $\fg$ is the module, induced from the one-dimensional module $\mathbb{C}_c$ of its subalegbra, generated by coefficients with non-negative indices and the center, which acts by $c$.)

\subsection{}
\begin{cor}{}
\begin{enumerate}
\item For $N=0$ and $1$ this module is irreducible if and only if $c$ is not the central charge  of minimal models
of the corresponding simple vertex algebra.
\item For $N=2$ this module is irreducible if and only if $c$ is not of the form $3-\frac{6p}{q}$, where
$p$ and $q$ are coprime positive inetegers and $q\geq 2$ (the subset with $p=1$ coincides with the well-known set of central charges of $N=2$ minimal models; the subset with $p\geq 1$ is the set of
``admissible'' central charges for which the modified characters and  supercharacters form a modular invariant family~\cite{KW3}), except possibly, if $c=-3$ or $3$.

\item For $N=3$ this module is irreducible if and only if $c$ is not a rational number, except, possibly , if
$c=-3$ or $-9$.
\item For $N=4$ this module is irreducible if and only if $c$ is not a rational number, except, possibly , if
$c=-6,-12$ or $-18$.
\item For big $N=4$ this module is irreducible if and only if $c$ does not lie in one of the following sets:
$\mathbb{Q}_{\geq 0}$, $a\mathbb{Q}_{>0}$, $ -(a+1)\mathbb{Q}_{>0}$, $\frac{1}{a}\mathbb{Q}_{>0}$,
 $-\frac{1}{a+1}\mathbb{Q}_{>0}$,  $-\frac{a+1}{a}\mathbb{Q}_{>0}$,  $-\frac{a}{a+1}\mathbb{Q}_{>0}$, except, possibly,
 if $c=-6$ or $-12$.
\end{enumerate}
\end{cor}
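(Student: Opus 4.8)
The plan is to reduce each assertion to simplicity of the corresponding universal minimal $W$-algebra and then read off the answer from \cite{GKvac}, Thm.~9.1.2 together with the length-two result recorded above. First I would fix, for each value of $N$, the superalgebra $\dot\fg$ whose minimal $W$-algebra is the superconformal algebra in question: $\fsl_2$ for $N=0$, $\mathfrak{spo}(2|1)$ for $N=1$, $\mathfrak{spo}(2|2)=\fsl(2|1)$ for $N=2$, $\mathfrak{spo}(2|3)$ for $N=3$, $\fpsl(2|2)$ for small $N=4$, and $D(2|1,a)$ for big $N=4$; for each I record $h^{\vee}$ and $\sdim\dot\fg$, and hence the explicit central charge $c=c(k)$. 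The vacuum module over the superconformal algebra is, by construction, the universal vertex algebra $W^k_{\min}(\dot\fg)$, so it is irreducible precisely when $W^k_{\min}(\dot\fg)$ is simple. Thus for $k\neq -h^{\vee}$ I may invoke \cite{GKvac}, Thm.~9.1.2, reducing the problem to: either $V^k$ is irreducible, or $k\in\ZZ_{\geq 0}$ and $V^k$ has length two.

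The second ingredient is the irreducibility criterion for $V^k$ stated in the three bullets, whose output is governed in each case by the even roots of $\dot\fg_{\ol 0}$. For $N=0,1$ the first two bullets produce exactly the complement of the minimal-model set (the excluded fractions $\tfrac1m$, $\tfrac1{2m-1}$ matching the admissible levels), which gives part~(i). For $N=2$ there is a single even root of positive square length, so the criterion is one-sided and the reducibility locus is a single rational ray; under $c=c(k)=3-6(k+1)$ this becomes the set $\{3-\tfrac{6p}{q}\}$ of part~(ii). For $N=3$ and small $N=4$ the two even roots have opposite signs, so the criterion becomes two-sided and $V^k$ is reducible for every rational $k$, i.e.\ at every rational $c$, giving parts~(iii)--(iv). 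For big $N=4$ the three even roots have square lengths $2,-2/(1+a),-2a/(1+a)$, and the criterion excludes the union of the corresponding rational rays, which after passing to $c$ is the seven-ray set of part~(v).

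To pass from ``$V^k$ reducible'' to ``vacuum module reducible'' I would apply the length-two result: for $\dot\fg$ as in \eqref{list1} and $k\in\ZZ_{\geq 0}$ satisfying \eqref{3}, i.e.\ $k\geq \tfrac52-2h^{\vee}$, the module $V^k$ has length two and so $W^k_{\min}(\dot\fg)$ is nevertheless simple. Hence every integer level with $k\geq \tfrac52-2h^{\vee}$ must be deleted from the reducibility locus, while only the finitely many integer levels $0\leq k<\tfrac52-2h^{\vee}$ stay undecided. Translating these through $c(k)$ produces exactly the ``except possibly'' charges: a single level $k=0$ for $N=2$ (yielding $c=-3$), two levels $k=0,1$ for $N=3$, three levels $k=0,1,2$ for small $N=4$ (where $c(k)=-6k-6$ gives $c=-6,-12,-18$), and the corresponding boundary levels for big $N=4$. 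The critical value $c(-h^{\vee})$ is excluded throughout, since $k=-h^{\vee}$ lies outside the hypothesis of \cite{GKvac}, Thm.~9.1.2.

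The main obstacle is the bookkeeping of this last step: verifying that every integer level with $k\geq\tfrac52-2h^{\vee}$ lands outside the stated exceptional set, and that the finitely many undecided levels reproduce exactly the quoted charges. For $N=0,1$ this is complicated by the fact that $k\mapsto c(k)$ is two-to-one (the minimal-model symmetry $t\leftrightarrow t^{-1}$ on $t=k+h^{\vee}$), so one must check that the two preimages of a given $c$ carry the same verdict; this is automatic, as they define the same vertex algebra, but it must be tracked when matching discrete series to central charges. For big $N=4$ the difficulty is instead the continuous parameter $a$: I must confirm that the seven rays are precisely the image of the three root-length rays under the reparametrization, that they are stable under the $S_3$-symmetry $a\mapsto \tfrac1a,\ a\mapsto -1-a$ of $D(2|1,a)$, and that the low-level boundary cases collapse to the two exceptional charges $c=-6,-12$.
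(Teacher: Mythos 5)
Your overall route coincides with the paper's: identify $\dot\fg$ for each $N$, use the equivalence from \cite{KW}, Section 8 between irreducibility of the superconformal vacuum module and simplicity of $W^k_{\min}(\dot\fg)$ for $k\neq -h^{\vee}$, and then combine \cite{GKvac}, Thm.~9.1.2 with the irreducibility criterion for $V^k$ and the length-two Theorem/Table. However, there are two genuine gaps. The first is that you translate every case through $c(k)$, whereas the identification in \cite{KW}, Section 8 (stated explicitly in the paper's proof) says the $N=3$ superconformal algebra at central charge $c(k)+\frac12$ corresponds to $W^k_{\min}(\mathfrak{spo}(2|3))$, and the big $N=4$ algebra at central charge $c(k)+3$ corresponds to $W^k_{\min}(D(2|1,a))$. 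This shift is not cosmetic: since $\sdim\mathfrak{spo}(2|3)=0$ one has $c(k)=-6k-\frac72$, so the undecided levels $k=0,1$ give $c(k)=-\frac72,-\frac{19}{2}$, and only after adding $\frac12$ do you obtain the stated exceptions $-3,-9$; likewise for $D(2|1,a)$ one has $c(k)=-6k-3$, so $k=1,2$ give $-9,-15$, and only $c(k)+3$ yields $-6,-12$. Your assertion that the big $N=4$ boundary levels ``collapse to $c=-6,-12$'' is therefore inconsistent with the translation rule you actually state.

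The second gap is the critical level. Declaring that ``the critical value $c(-h^{\vee})$ is excluded throughout'' works for $N=2$, where it produces exactly the extra exception $c=3$, but fails for $N=3$ and big $N=4$: there the critical level corresponds to superconformal central charge $0$, and $0$ is rational (resp.\ lies in $\mathbb{Q}_{\geq 0}$), so statements (iii) and (v) claim non-irreducibility at $c=0$ without listing it as an exception. Merely excluding that value proves a strictly weaker statement with $c=0$ added to the exceptional set; the paper closes this by the separate assertion that for $c=0$ the vacuum module is not irreducible. (Your handling of the two-to-one issue for $N=0,1$ is essentially correct --- the superconformal vacuum module depends only on $c$ --- and matches the paper's remark that for these $N$ the function $c(k)$ takes all values in $\mathbb{C}$ as $k$ ranges over non-critical levels.)
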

\begin{proof}
When $k\not=-h^{\vee}$, the irreducibility of the vacuum module over $N=0,1,2$ superconformal algebras with central charge $c=c(k)$ is equivalent to simplicity of the vertex algebra $W^k_{\min}(\mathfrak{spo}(2|N)$;
over $N=4$ superconformal algebra  to simplicity of  $W^k_{\min}(\mathfrak{psl}(2|2)$; over $N=3$ superconformal algebra  with central charge $c(k)+\frac{1}{2}$ to simplicity of  $W^k_{\min}(\mathfrak{spo}(2|3)$; and over big $N=4$ superconformal algebra with central charge $c(k)+3$   to simplicity of  $W^k_{\min}(D(2|1;a))$ (see~\cite{KW}, Section 8). 

For $N=0,1$, $c(k)$ with $k\not=-h^{\vee}$ takes all values in $\mathbb{C}$. For $N=3$ and big $N=4$ superconformal algebras, $c(-h^{\vee})=0$, but for $c=0$ the vacuum module is not irreducible. 
Now the Corollary follows from~(\ref{3}) and the above Table.
\end{proof}

Our main theorem in unitary type cases is the following.

\subsection{}
\begin{thm}{thm2}
If the normalization of the bilinear form $(.\, , .\, )$ is of unitary type, then the $\fg$-module  $V^k$ with $k\in\mathbb{Z}_{\geq 0}$
has length $2$ if~(\ref{3}) holds.
\end{thm}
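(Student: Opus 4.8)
The plan is to reduce the length-two assertion to the simplicity of the maximal proper submodule of $V^k$, and then to establish the latter by the Jantzen sum formula together with the Kac--Kazhdan determinant. First I would record that for $k\in\mathbb{Z}_{\geq 0}$ the weight $k\Lambda_0$ is dominant integral for the affine $\widehat{\fsl}_2$ attached to the node $\alpha_0=\delta-\theta$, so that $V^k$ carries the singular vector $(e_\theta\otimes t^{-1})^{k+1}\one$, of weight $\mu:=s_{\alpha_0}\cdot(k\Lambda_0)=k\Lambda_0-(k+1)\alpha_0$; the quotient by the submodule it generates is the irreducible module $L(k\Lambda_0)$. Hence it suffices to prove that the maximal submodule $M$ (the radical of the contravariant form, equivalently the first Jantzen layer $V^k(1)$) is simple and equal to $L(\mu)$.

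The governing feature of unitary type is that $\dot{\Delta}^{\#}=\{\pm\theta\}$: the only even roots of positive square length are $\pm\theta$, so apart from the single integrable node $\alpha_0$ every competing contribution comes from directions that are isotropic or of non-positive even square length (the odd non-isotropic directions, as in $\fosp(1|2n)$, being handled by their own known structure). I would feed this into the Jantzen sum formula $\sum_{i\geq 1}\ch V^k(i)=\sum_{\beta,m}\pm\,\ch(\cdots)$, where $\beta$ runs over positive real roots with $2(k\Lambda_0+\hrho,\beta)=m(\beta,\beta)$, supplemented by the atypicality terms $(k\Lambda_0+\hrho,\beta)=0$ for odd isotropic $\beta$. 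Writing $\beta=\gamma+n\delta$ and using $(k\Lambda_0+\hrho,\gamma+n\delta)=n(k+h^\vee)+(\rho,\gamma)$, the even condition becomes $n(k+h^\vee)=\tfrac m2(\gamma,\gamma)-(\rho,\gamma)$ and the odd one $n(k+h^\vee)=-(\rho,\gamma)$; the $\alpha_0$-contribution ($\gamma=-\theta$, $n=1$) occurs exactly at $m=k+1$ and yields $\ch\V(\mu)$ for the generalized Verma module $\V(\mu)$.

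The heart of the argument, and the place where hypothesis (\ref{3}) enters, is to show that under $k\geq\frac{5}{2}-2h^\vee$ this is the only surviving term and that $\V(\mu)$ is already simple. Since (\ref{3}) gives $k+h^\vee\geq\frac{5}{2}-h^\vee>0$, the left-hand side $n(k+h^\vee)$ of each condition is a large positive quantity, whereas for $(\gamma,\gamma)\leq 0$ the right-hand sides are bounded above by $-(\rho,\gamma)$. I would then verify, family by family through the list (\ref{list1}) using the explicit root data and the values of $h^\vee$, that every such competing solution forces $k+h^\vee$ strictly below the threshold $\frac{5}{2}-h^\vee$; the most dangerous competitors are the odd isotropic roots with $n=1$, whose singular vectors would otherwise sit at conformal weight $1$ inside $M$. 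Consequently the sum collapses to $\ch\V(\mu)$, and the same threshold makes the contravariant form on $\V(\mu)$ nondegenerate, so $\V(\mu)=L(\mu)$.

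Finally I would assemble the squeeze: the generator of $M$ spans a homomorphic image $N$ of $\V(\mu)=L(\mu)$, and $\ch L(\mu)\leq\ch M=\ch V^k(1)\leq\sum_{i\geq 1}\ch V^k(i)=\ch\V(\mu)=\ch L(\mu)$ forces $M=N=L(\mu)$, whence $V^k$ has length $2$. The main obstacle I anticipate is the super-specific bookkeeping of the odd isotropic contributions to the Jantzen sum, and the verification that $\tfrac{5}{2}-2h^\vee$ is uniformly the sharp threshold across all six families in (\ref{list1}): the even, lacety-type terms and the reduction to $\widehat{\fsl}_2$ are comparatively routine, but pinning down that no atypical odd singular vector survives inside $M$, and that the deeper generalized Verma $\V(\mu)$ is genuinely simple, is where the real work lies.
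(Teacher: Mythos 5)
Your outer logic is fine: the reduction to ``the maximal proper submodule $M$ of $V^k$ is simple,'' and the final squeeze $\ch L(\mu)\leq \ch M=\ch \cJ^1(V^k)\leq \sum_{i\geq 1}\ch\cJ^i(V^k)$, are correct, and the paper itself records the ``only if'' half of this mechanism in \Cor{cors=11}. The gap is that both pivotal claims in the middle are false, already for $\dot{\fg}=\fsl_2$, which satisfies~(\ref{3}) for every $k\in\mathbb{Z}_{\geq 0}$. (a) The Jantzen sum never collapses to the single $\alpha_0$-term: by the paper's Proposition in Section 5.5, for a Lie algebra and $k\in\mathbb{Z}_{\geq 0}$ it equals $\sum_{w\in\dot{W}}\sum_{\alpha\in{\Delta}^+_{\re}\setminus\dot{\Delta}}\sgn(w)\,e^{ws_{\alpha}(k\Lambda_0+\rho)}$, and integrability of $L(k\Lambda_0)$ makes \emph{every} real root $n\delta\pm\theta$, $n\geq 1$, active for every $k\in\mathbb{Z}_{\geq 0}$; concretely, $\phi_{r(n\delta-\theta)}(k)=0$ for $r=n(k+2)-1$, so the weights $k\Lambda_0-(n(k+2)-1)(n\delta-\theta)$ all contribute. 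These infinitely many terms do not disappear above your threshold; they are exactly the higher terms of the alternating numerator of $\ch L(s_0.k\Lambda_0)$. (b) The generalized Verma module $\V(\mu)$, $\mu=s_0.k\Lambda_0$ (induced from $L_{\dot{\fg}}((k+1)\theta)$ at level $k$), is \emph{not} simple. For $\fsl_2$, $k=0$: the weight spaces of $\V(\mu)$ at $\delta$-level $3$ have total dimension $9\cdot 3=27$, while those of $V^0$ have total dimension $22$ (coefficient of $q^3$ in $\prod_{n\geq 1}(1-q^n)^{-3}=1+3q+9q^2+22q^3+\cdots$); since $L(\mu)$ is a subquotient of $V^0$, its character is bounded by $\ch V^0$, so $\ch L(\mu)\neq\ch\V(\mu)$. (The culprit is a singular vector at $s_{2\delta-\theta}.\mu$; note $\langle\mu+\rho,(2\delta-\theta)^{\vee}\rangle=1$ for every $k$.) What length two actually asserts is that the \emph{image} of $\V(\mu)\to V^k$ is simple, not $\V(\mu)$ itself: your claims (a) and (b) together would force the Garland--Lepowsky resolution of $L(k\Lambda_0)$ to terminate after one step, whereas over an affine algebra it is infinite, indexed by the infinitely many reflections $s_{n\delta-\theta}$.

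To repair this route you would need either the character of $L(s_0.k\Lambda_0)$ --- a non-dominant highest weight, to which neither the Weyl--Kac nor the Kac--Wakimoto formulas used in the paper apply --- or an independent proof that the image of $\V(\mu)$ in $V^k$ is simple; and there is no Shapovalov-type determinant for $\V(\mu)$ in the cited literature with which to run your ``nondegenerate contravariant form'' step, so that claim is not merely unproved but, as shown above, false. This is why the paper's proof of \Thm{thm2} proceeds entirely differently: it filters $V^k$ by $\fg_{\ol{0}}$-submodules whose layers are quotients of $N_{\fg_{\ol{0}}}(k\Lambda_0-\mu_A)$, indexed by finite subsets $A$ of $\Delta^+_{\ol{1}}\setminus\dot{\Delta}$ (\Cor{corfiltr}), and then combines the Casimir with the unitary-type norm inequalities~(\ref{check!}) and \Cor{cor7} to show that the only possible $\fg$-primitive weights with the correct Casimir value are $k\Lambda_0$ and $s_0.k\Lambda_0$; hypothesis~(\ref{3}) enters only through those inequalities. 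The Jantzen machinery you invoke is used in the paper only in the opposite direction, namely to prove length greater than two in the non-unitary cases of \Thm{thm3}.
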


The results, displayed in the above Table, follow from this Theorem. 

In the second part of this paper (Sections~\ref{sect4} and~\ref{jantzen}), 
we study the cases of normalization of the bilinear form $(.\, , .\, )$  of non-unitary type.
Using Janzen's filtration and character formulas, we find lower bounds on $k\in\mathbb{Z}_{\geq 0}$, for
which the $\fg$-module $V^k$ has length greater than $2$. Namely, we prove the following
theorem.

\subsection{}
\begin{thm}{thm3}
For $k\in\mathbb{Z}_{\geq 0}$ the vacuum 
$\fg$-module $V^k$ has length greater than $2$ if 
\begin{enumerate}
\item $\dot{\fg}$ has defect $0$ (i.e., $\dot{\fg}$ is a Lie algebra or $\dot{\fg}=\mathfrak{spo}(2n|1)$), unless
$\dot{\fg}=\fsl_2$ or  $\dot{\fg}=\mathfrak{spo}(2|1)$, in which case $V^k$ has length $2$ for all 
$k\in\mathbb{Z}_{\geq 0}$;
\item
$\dot{\fg}=\fsl(m|n)$,
$\fosp(2m+1|2n)$ with $m-2\geq n>0$, or
$\dot{\fg}=\fosp(2m|2n)$ with $m-2\geq n>0$, or $\dot{\fg}=\mathfrak{spo}(2n|2m)$, $\mathfrak{spo}(2n|2m)$
with $n>m>0$, or non-unitary type cases of
 $\dot{\fg}=G(3)$,   $F(4)$;
 \item  $\dot{\fg}=\fsl(n+1|n), \fosp(2n+3|2n)$, $n>0$,  if $k\geq 1$;
 \item  $\dot{\fg}=\mathfrak{psl}(n|n)$ for $n>2$ or 
$\dot{\fg}=\mathfrak{spo}(2n|2n)$, $\mathfrak{spo}(2n|2n+1)$, $\fosp(2n+2|2n)$ 
with $n\geq 2$  if  $k\geq 2n$  for
  $\dot{\fg}=\fosp(2n+2|2n)$  and $k\geq n$ for other cases;
  \item $\dot{\fg}=\mathfrak{spo}(2n|2n+2)$ and $k>2n$;
  \item  $\dot{\fg}=\fosp(4|n)$ for even $n\geq 4$ if 
$k\in\mathbb{Z}_{\geq \frac{n}{2}}$;
  \item  $\dot{\fg}=D(2|1,a)$ of non-unitary type with $a<-1$ if $k\in(\mathbb{Z}_{>0}\cap a^{-1}\mathbb{Z})$. 
\end{enumerate}
\end{thm}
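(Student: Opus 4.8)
The plan is to reduce the statement to the non-simplicity of the maximal submodule of $V^k$ and then to detect several composition factors by means of the Jantzen filtration. Recall that $V^k$ carries a contravariant (Shapovalov) form whose radical is the unique maximal proper submodule $N$; since $V^k/N$ is the unique simple quotient, $V^k$ has length $2$ precisely when $N$ is simple, and length greater than $2$ precisely when $N$ has at least two composition factors. I will therefore work with the Jantzen filtration $V^k=V^k(0)\supseteq V^k(1)\supseteq\cdots$ attached to this form, for which $V^k(1)=N$, and with the associated Jantzen sum formula. Writing it as $\sum_{i\geq 1}\ch V^k(i)=\sum_{j}j\,\ch\bigl(V^k(j)/V^k(j+1)\bigr)$, a simple module $L(\mu)$ occurs on the left-hand side with positive coefficient if and only if it is a composition factor of $N$; hence, once the right-hand side is shown to involve at least two non-isomorphic simple modules, the length of $V^k$ exceeds $2$.

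First I would record the explicit form of the Jantzen sum for the induced (generalized Verma) module $V^k$, as set up in \cite{GKvac}. The radical of the contravariant form degenerates exactly along the hyperplanes prescribed by the Kac--Kazhdan equations for the affinization $\fg$: for a positive root $\beta$ of $\fg$ one requires $2(k\Lambda_0+\hrho,\beta)=m(\beta,\beta)$ with $m\in\ZZ_{>0}$ when $\beta$ is non-isotropic, and $(k\Lambda_0+\hrho,\beta)=0$ when $\beta$ is odd isotropic. Each such coincidence contributes, through the sum formula, a term built from the corresponding singular vector at the reflected weight $s_\beta\cdot(k\Lambda_0)$ (shifted action, interpreted inside $V^k$). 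For $k\in\ZZ_{\geq 0}$ the affine simple root $\alpha_0=\delta-\theta$ always satisfies its equation, with $m=k+1$, producing the integrability singular vector responsible for $N\neq 0$ and thus for length at least $2$; the content of the theorem is the existence of a \emph{second}, independent contribution.

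The main work is the case-by-case search for this second contributing root and the extraction of the sharp lower bound on $k$. For the higher-rank Lie algebras and for $\mathfrak{spo}(2n|1)$ with $n>1$ (case~(1)), the horizontal root system has rank at least two, so besides $\alpha_0$ the real affine roots $\beta=\delta-\gamma$, with $\gamma$ a positive root of $\dot{\fg}$, solve $2\bigl(k+h^{\vee}-(\rho,\gamma)\bigr)=m(\gamma,\gamma)$ with $m\in\ZZ_{>0}$ for \emph{every} $k\in\ZZ_{\geq 0}$, giving singular vectors at distinct weights and hence at least two distinct simple factors in the Jantzen sum; only $\fsl_2$ and $\mathfrak{spo}(2|1)$, being of rank one, admit a single contribution and so remain at length~$2$. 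In the remaining cases (2)--(7) the second root is supplied by the odd, frequently isotropic, roots of $\dot{\fg}$: writing the vacuum weight in the bilinear form normalized as in the Introduction, one solves the relevant Kac--Kazhdan equation for $\beta$, and the requirement $m\in\ZZ_{>0}$ (or the isotropic condition) together with $k\in\ZZ_{\geq 0}$ produces exactly the displayed thresholds. Thus the bound $k\geq 1$ in (3), the bounds $k\geq n$ or $k\geq 2n$ in (4)--(5), the bound $k\geq n/2$ in (6), and the arithmetic condition $k\in\ZZ_{>0}\cap a^{-1}\ZZ$ in (7) are each the smallest level at which the chosen second equation becomes solvable; the cases in (2) need no bound because the second root contributes for all $k\in\ZZ_{\geq 0}$. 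The exceptional algebras $G(3)$, $F(4)$ and $D(2|1,a)$ are handled by the same mechanism after writing out their root data explicitly.

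The step I expect to be the main obstacle is verifying that the two contributing roots yield \emph{non-isomorphic} simple composition factors rather than collapsing onto a single highest weight. For Lie algebras this is automatic from the linkage principle, but for the atypical superalgebras of cases (2)--(7) the linkage is governed by the isotropic odd roots and coincidences are genuinely possible; ruling them out is where the character formulas enter. Concretely, I would expand each Jantzen term into simple characters using the Kac--Wakimoto type character formula appropriate to the atypicality of $\dot{\fg}$, and check that the simple module attached to the second root survives with positive coefficient and is distinct from the integrability factor. The sharpness of the stated thresholds, and the clean separation of the length-$2$ exceptions $\fsl_2$ and $\mathfrak{spo}(2|1)$ from all remaining cases, should both fall out of this character bookkeeping.
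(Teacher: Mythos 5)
Your toolkit (Jantzen filtration, sum formula, Kac--Wakimoto characters) is the same one the paper uses for cases (i)--(v), but your core mechanism has a genuine gap, and it is precisely the one you flag yourself. Extra roots satisfying the Kac--Kazhdan equations produce extra terms in the Jantzen sum, but they do \emph{not} directly produce extra composition factors: if $V^k$ had length $2$, the sum would equal $s\cdot\ch L(s_0.k\Lambda_0)$, and this character has infinitely many weights with potentially large multiplicities, so the contribution of your ``second root'' can simply be absorbed into it. Deciding whether it is absorbed requires controlling weight multiplicities of an atypical simple highest weight module over an affine superalgebra, which is exactly the hard part, and your proposal does not supply a method for it. The paper circumvents this entirely: it first pins the filtration down at the single weight $s_0.k\Lambda_0$ (Lemma~\ref{s=1}, giving $\cJ^2(V^k)=0$ under the length-$2$ hypothesis), so that the Jantzen sum must equal $\ch V^k-\ch L(k\Lambda_0)$ exactly; it then rewrites the right-hand side by the Kac--Wakimoto formula and compares \emph{supports}: the Jantzen side is supported (after projection $\dot P$) in a finite union of one-dimensional affine lattice cones (Corollary~\ref{YJmu0}), while the character side contains a lattice cone of dimension $\geq 2$ (Corollaries~\ref{coreta}, \ref{corbeta2}, \ref{correg2} plus the case analysis). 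Since a higher-dimensional cone cannot lie in finitely many lower-dimensional ones, the two sides differ --- no identification of individual composition factors or multiplicities is ever needed. Note also that the thresholds on $k$ do not come from solvability of a second Kac--Kazhdan equation as you predict (for isotropic $\beta\in S$ one has $(k\Lambda_0+\rho,\beta)=0$ for \emph{every} $k$); they come from $W^{\#}$-regularity of $k\Lambda_0+\rho-\mu_0$, from conditions like $(k\Lambda_0+\rho,\alpha_0)>0$ when $\alpha_0\notin\Sigma$, and for $\mathfrak{spo}(2n|2n+2)$ from the uniqueness argument in Lemma~\ref{lemqL2n+22n}.

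A second, smaller defect: your uniform Jantzen/KW scheme cannot cover cases (vi) and (vii). For $D(2|1,a)$ (and for $\fosp(4|n)$, where $\dot W'\neq\dot W^{\#}$) the character formula~(\ref{KWformulashort}) underlying the whole Section~\ref{jantzen} machinery is not available, which is why the paper treats these cases separately in Section~\ref{sect4} by an odd-reflection argument: assuming length $2$ forces the simple modules defined by the two Borels attached to $\theta$ and $\theta'$ to coincide, and tracking highest weights through the chain of odd reflections yields a numerical contradiction (e.g.\ $2(k+1)\leq 2n$ for $\fosp(4|2n)$). You would need some replacement for this in your framework; as written, your proposal has no route to (vi)--(vii).
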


\subsection{}
For the case when $\dot{\fg}$ is a Lie algebra (i) is proved in~\cite{GKvac}, Theorem 9.1.2 (ii);
 in Section~\ref{defectzero} of the present paper we give a shorter proof of (i).

In Section~\ref{critical} we recall
 a proof that $V^{-h^{\vee}}$ is of infinite length (hence the vertex algebra $W^{-h^{\vee}}_{\min}(\dot{\fg})$ is never simple).

\subsection{}Open cases of Theorems~\ref{thm2} an~\ref{thm3}  for  $k\in\mathbb{Z}_{\geq 0}$, $k\not=-h^{\vee}$,
 are the following:
\begin{enumerate}
\item normalization of type  1 for $k<\frac{5}{2}-2h^{\vee}$, 
\item normalization of non-unitary type for $h^{\vee}<0$: $\dot{\fg}=\fsl(m|n)$ with $2<m<n$, $\dot{\fg}=\mathfrak{spo}(n|m)$ with $m>n+2\geq 4$,
$\dot{\fg}=\fosp(m|n)$ with $5\leq m<n+2$ for all $k$;  $\dot{\fg}=\fosp(4|n)$ for even $n\geq 4$ if $k<\frac{n}{2}$;
\item normalization of non-unitary type for $h^{\vee}=0$: $\dot{\fg}=\mathfrak{spo}(2n|n+2)$ for $k\leq 2n$;
$\dot{\fg}=\mathfrak{psl}(n|n)$  for $n>2$ if $k<n$;
$\dot{\fg}=\fosp(2n+2|2n)$ if  
$k<2n$;  $D(2|1,a)$ with $a<-1$ if $ak\not\in\mathbb{Z}$.
\item  normalization of non-unitary type for $h^{\vee}>0$: $\dot{\fg}=\fsl(n+1|n), \fosp(2n+3|2n)$  if  $k=0$ and 
$\dot{\fg}=\mathfrak{spo}(2n|2n)$, $\mathfrak{spo}(2n|2n+1)$    if $k<n$.
\end{enumerate}

{\em Acknowledgment.}
We are grateful to  Dra\v zen Adamovi\'c for  important suggestions and observations. V.K. was partially supported by Simons collaboration grant and by ISF Grant 1957/21.
M. G. was supported by  ISF Grant 1957/21.

\section{Preliminaries}

We will use the standard notation of~\cite{KLie} for Lie superalgebras and their root systems.

\subsection{Notation}
Let $\fg$ be the (non-twisted) affinization of $\dot{\fg}$ and $\Delta$ be the   root system
of $\fg$. The root system 
$${\Delta}^{\#}:=\{\alpha\in {\Delta}_{\ol{0}}|\ (\alpha,\alpha)\in\mathbb{R}_{>0}\}$$
is the affinization of $\dot{\Delta}^{\#}$.

We fix the triangular decomposition of $\fg_{\ol{0}}$ with the Cartan subalgebra $\fh$, containing $\dot{\fh}$.
It extends to a triangular decompostion of $\fg$ and restricts to that of $\dot{\fg}+\fh$ and $\dot{\fg}_{\ol{0}}+\fh$.
We denote by $\Sigma(\fg_{\ol{0}})$, $\Sigma:=\Sigma(\fg)$, $\Sigma(\dot{\fg}_{\ol{0}})$ and $\dot{\Sigma}:=\Sigma(\dot{\fg})$ 
the corresponding sets of simple roots. For $\lambda\in\fh^*$ we denote 
  by  $M_{\fg_{\ol{0}}}(\lambda)$,  $M(\lambda)$, $M_{\dot{\fg}_{\ol{0}}}(\lambda)$,
  and $M_{\dot{\fg}}(\lambda)$,
the corresponding Verma modules, respectively, and by
$L_{\fg_{\ol{0}}}(\lambda)$, $L(\lambda)$,
$L_{\dot\fg_{\ol{0}}}(\lambda)$,  and $L_{\dot{\fg}}(\lambda)$,their irreducible quotients.

We denote by $\rho$ a Weyl vector of ${\fg}$ and consider the usual shifted action of the Weyl group $W$  
of ${\fg}$ given by
$$w.\lambda=w(\lambda+\rho)-\rho.$$

\subsubsection{}
We set $\alpha_0:=\delta-\theta$ and let $s_0\in W$
be the reflection with respect to $\alpha_0$.

\subsubsection{}
For the fixed triangular decomposition of $\Delta$ we
let $\dot{\Delta}^+$, $\Delta^+$ be  sets of positive roots
for $\dot{\Delta}$ and $\Delta$ respectively. 
We set
$Q:=\mathbb{Z}\Delta$ and $Q^+:=\mathbb{Z}_{\geq 0}\Delta^+$; we  define $\dot{Q},\dot{Q}^+$ 
in a similar way.
We will often decompose $\mu\in Q$ as   $\mu=j\delta+\dot{\mu}$ where
$j\in\mathbb{Z}$, $\dot{\mu}\in\dot{Q}$.

\section{Proof of~\Thm{thm2}}\label{smallones}

We list the main steps of the proof of ~\Thm{thm2} in Section~\ref{plan} below.
The details occupy the rest of this section.

\subsection{Notation}
In what follows  the triangular decompositions for subalgebras of $\fg_{\ol{0}}$ are always  induced by the  triangular decomposition of $\fg_{\ol{0}}$ and  we consider each subalgebra equipped by
the invariant bilinear form induced by the bilinear form $(.\, ,.\, )$
of unitary type on $\dot{\fg}$. For subalgebras $\fl$, containing $\fh$, we
 will use notation $\Delta(\fl)$, $\Sigma(\fl)$
for the set of roots  and  the set of simple roots respectively.

\subsubsection{}\label{fl}
Recall that $\fg_{\ol{0}}$ is the affinization of $\dot\fg_{\ol{0}}$ which corresponds to the bilinear form $(.\, ,.\, )$. 
 
 The Lie algebra $\dot\fg_{\ol{0}}$ is the direct product of several  subalgebras $\dot{\fl}^{(1)},\ldots,\dot{\fl}^{(s)}$ ($s$ is $1$, $2$ or $3$) where $\dot{\fl}^{(i)}$ is  a simple Lie algebra for $i>1$ and $\dot{\fl}^{(1)}=\dot{\fg}^{\#}$ is either simple or 
 a product of simple and $\mathbb{C}$. 
 Let 
$${\fl}^{(i)}=\bigl(\dot{\fl}^{(i)}\otimes \mathbb{C}[t^{\pm 1}]\bigr)\oplus \mathbb{C}K_i\oplus \mathbb{C}d_i$$
be the affinization of $\dot{\fl}^{(i)}$ which corresponds to the restriction of   $( .\,, .\,)$ to $\dot{\fl}^{(i)}$; we view $\fl^{(i)}$ as a subalgebra of $\fg_{\ol{0}}$ (identifying $K_i$ with $K$ and $d_i$ with $d$) and call such subalgebra an {\em affine component} of $\fg_{\ol{0}}$;  we denote $\fg^{\#}:=\fl^{(1)}$.
We let $\dot{\fh}^{(i)}:=\fh\cap \dot{\fl}^{(i)}$ be the corresponding Cartan subalgebra of $\dot{\fl}^{(i)}$; then $\dot{\fh}=\oplus_{i=1}^{s} \dot{\fh}^{(i)}$.
We identify the Cartan subalgebra of $\fl^{(i)}$ with
 $\dot{\fh}^{(i)}\oplus  \mathbb{C}K\oplus \mathbb{C}d$.

%
%
%

\subsubsection{}
Let $\fl$ be an affine component of $\fg_{\ol{0}}$ and $W(\fl)$ be its Weyl group. 
Recall that the Weyl group $W$ of $\fg_{\ol{0}}$ and $\fg$ is generated by the reflections $s_{\alpha}$ for 
$\alpha\in\Sigma(\fg_{\ol{0}})$ and $W=\prod\limits_{i=1}^{s} W(\fl^{(i)})$.   Let us check that the formula
$$w\circ\lambda=w(\lambda+\rho_{\fl^{(i)}})-\rho_{\fl^{(i)}}\ \text{  for }w\in W(\fl^{(i)}),\ \ i=1,\ldots,s,$$ 
defines an action of $W$ on $\fh^*$.  Note that
$$s_{\alpha}\circ \lambda=\lambda-\alpha-\frac{(\lambda,\alpha)}{(\alpha,\alpha)}\alpha\ \ \text{
for each }\alpha\in\Sigma(\fg_{\ol{0}}).$$
Since for $w\in W(\fl^{(i)})$ the action $\circ$ is the usual
shifted action of $W(\fl^{(i)})$ on $\fh^*$, it is enough  to verify that $s_{\alpha}\circ (s_{\beta}\circ \lambda)=s_{\beta}\circ 
(s_{\alpha} \circ \lambda)$
for $\alpha\in \Sigma(\fl^{(i)})$ and $\beta\in  W(\fl^{(j)})$ with $i\not=j$. In this case $(\alpha,\beta)=0$ and the above formula 
gives
$$s_{\beta}\circ (s_{\alpha}\circ \lambda)=s_{\beta}\circ (\lambda-\alpha-\frac{(\lambda,\alpha)}{(\alpha,\alpha)}\alpha)
=\lambda-\alpha-\frac{(\lambda,\alpha)}{(\alpha,\alpha)}\alpha-\beta-\frac{(\lambda,\beta)}{(\beta,\beta)}\beta=
s_{\alpha}\circ (s_{\beta}\circ \lambda)$$
as required. For $\alpha\in \Sigma\cap \Sigma(\fl^{(i)})$ we have
$$s_{\alpha}\circ\lambda=s_{\alpha}\lambda+s_{\alpha} \rho_{\fl^{(i)}}-\rho_{\fl^{(i)}}=s_{\alpha}\lambda-\alpha=
s_{\alpha}.\lambda.$$
Thus  $s_{\alpha}\circ\lambda=s_{\alpha}.\lambda$ if $\alpha\in \Sigma\cap \Sigma(\fg_{\ol{0}})$.

We will use the following simple formula verified in~\ref{circle}:
 \begin{equation}
 \label{wcirceta}
 ||\eta+\rho||^2-||w\circ\eta+\rho||^2=2(\rho-\rho_{\fl}, \eta-w\circ\eta) \ \text{ for } 
 w\in W(\fl)\ \text{ and }\eta\in\fh^*.
 \end{equation}

 \subsubsection{}
\begin{defn}{}
A weight $\lambda\in\fh^*$ is called {\em $\dot\fg_{\ol{0}}$-dominant} if  $L_{\dot\fg_{\ol{0}}}(\lambda)<\infty$.
\end{defn}

\subsubsection{}
Take $\lambda\in\fh^*$.
We denote by $N_{\fg_{\ol{0}}}(\lambda)$ the maximal quotient  of $M_{\fg_{\ol{0}}}(\lambda)$ which is
locally finite as $\dot\fg_{\ol{0}}$-module. Clearly, $N_{\fg_{\ol{0}}}(\lambda)\not=0$
if and only if $\lambda$ is $\dot\fg_{\ol{0}}$-dominant.

Let $\fl$ be an affine component of $\fg_{\ol{0}}$. For $\lambda'\in (\fh\cap\fl)^*$ 
we denote by $N_{\fl}(\lambda')$ the maximal quotient  of 
$M_{\fl}(\lambda')$ which is
locally finite as $\dot{\fl}$-module. 

Assume that $(\lambda+\rho_{\fl},\delta)\not=0$ for
all affine components $\fl$ of $\fg_{\ol{0}}$. Note that $k\Lambda_0$ satisfies this assumption if 
$k\geq \frac{5}{2}-2h^{\vee}$ if $\dot{\fg}$ is from the list~(\ref{list1}).

It is well known that each subquotient of $M_{\fg_{\ol{0}}}(\lambda)$ (resp., 
$M_{\fl}(\lambda')$) is isomorphic to $L_{\fg_{\ol{0}}}(w\circ \lambda)$ 
(resp., $L_{\fl}(w\circ \lambda')$) for some $w\in W$
(resp., $w\in W(\fl)$). For  $\lambda\in \fh^*$ and $\lambda'\in (\fh\cap\fl)^*$  we set
$$\begin{array}{ll}
\Howl_{\fg_{\ol{0}}}(\lambda):=\{w\in W|\ [N_{\fg_{\ol{0}}}(\lambda):L_{\fg_{\ol{0}}}(w\circ \lambda)]\not=0\},\\
\Howl_{\fl}(\lambda'):=\{w\in W(\fl)|\ [N_{\fl}(\lambda'):L_{\fl}(w\circ \lambda')]\not=0\},\\
\Howl_{\fl}(\lambda):=\Howl_{\fl}(\lambda|_{\fh\cap \fl}).
\end{array}$$

We will use the following formula which will be checked in~\Lem{lemMlLl}:
\begin{equation}\label{howl}
\Howl_{\fg_{\ol{0}}}(\lambda)=\prod\limits_{i=1}^{s}  \Howl_{\fl^{i)}}(\lambda),\ \ \lambda\in\fh^*.\end{equation}

\subsection{Proof of~\Thm{thm2}}\label{plan}
One has, using the Casimir operator,
$$[V^k:L(\lambda)]\not=0\ \ \Longrightarrow\ \ [V^k:L_{\fg_{\ol{0}}}(\lambda)]\not=0\ \text{ and }
||\lambda+\rho||^2=||k\Lambda_0+\rho||^2.$$

We will check that 
\begin{equation}\label{need}
[V^k:L_{\fg_{\ol{0}}}(\lambda)]\not=0\ \text{ and }
||\lambda+\rho||^2=||k\Lambda_0+\rho||^2\ \ \Longrightarrow\ \ \lambda\in\{k.\Lambda_0,
s_0.k\Lambda_0\}.\end{equation}

Assume that  $\alpha_0=\delta-\theta$ lies in  $\Sigma$. 
Then we have $[V^k:L(s_0.k\Lambda_0]=1$, so~(\ref{need}) implies that
$V^k$ has length two.

 Let $\cA$ be the set of finite subsets of $\Delta_{\ol{1}}^+\setminus\dot{\Delta}$. 
 We set 
$$\mu_A:=\sum\limits_{\scriptstyle{\alpha\in A}}\alpha\ \text{ 
for } A\in\cA,$$
and introduce the following multiset 
$$\cA_+:=\{A\in\cA\ |\  \mu_A\not=0,\ \ -\mu_A\ \text{ is $\dot{\fg}_{\ol{0}}$-dominant}\}.$$

Note that the sets  $\cA$ and $\cA_+$  are countable  and they
do not depend on the choice of
$\dot{\Sigma}$: even though the set $\Delta_{\ol{1}}^+$ depends on the choice of 
$\dot{\Sigma}$,
 the set $\Delta_{\ol{1}}^+\setminus\dot{\Delta}=\dot{\Delta}_{\ol{1}}+\mathbb{Z}_{>0}\delta$ does not depend on this choice.

In~\Cor{corfiltr}, we show that 
each $\fg$-module $V^k$ admits an increasing  filtration by $\fg_{\ol{0}}$-submodules
$0=N_0\subset N_1\subset N_2\subset \ldots$
such that $N_{j+1}/N_{j}$ is a 
quotient of $N_{\fg_{\ol{0}}}(k\Lambda_0-\mu_{A_j})$ and $\{A_j\}_{j=1}^{\infty}=\cA_+$.
This gives
$$[V^k:L_{\fg_{\ol{0}}}(\lambda)]\not=0\ \ \Longrightarrow\ \  \lambda=w\circ(k\Lambda_0-\mu_A)\ 
\ \text{ for } \ \ A\in\cA_+,\  w\in \Howl_{\fg_{\ol{0}}}(k\Lambda_0-\mu_A).$$

Therefore for the implication~(\ref{need}) it is enough to verify that
for all $A\in\cA_+$ one has
\begin{equation}\label{tempo}
||w\circ (k\Lambda_0-\mu_A)+\rho||^2\leq ||k\Lambda_0+\rho||^2 \text{ for }w\in \Howl_{\fg_{\ol{0}}}(k\Lambda_0-\mu_A)
\end{equation}
and the above inequality is strict except when $\mu_A=0$ and $w\circ(k\Lambda_0-\mu_A)\in\{k\Lambda_0, s_0.k\Lambda_0\}$.

We have $W=W^{\#}\times W_-$
where $W^{\#}:=W(\fg^{\#})$ is the subgroup generated by $s_{\alpha}$ with
$(\alpha,\alpha)>0$ and $W_-$ is the subgroup generated by $s_{\alpha}$ with
$(\alpha,\alpha)<0$. 
Take any $\eta\in\fh^*$ and $w\in \Howl_{\fg_{\ol{0}}}(\eta)$. Write $w=w_+w_-$ where $w_+\in W(\fg^{\#})$
and $w\in W_-$. From now on we assume that 
$\dot{\Sigma}$ contains all negative square length simple roots for $\dot\fg_{\ol{0}}$
(see Section~\ref{Step4} for the list of $\dot{\Sigma}$).
In Lemmas~\ref{lemcirc}, \ref{lemassmstar} 
we show that
\begin{equation}\label{w+<w_-}
||w\circ \eta+\rho||^2<||w_+\circ \eta+\rho||^2\ \text{ if } w_-\circ \eta\not=\eta.
\end{equation}
Hence it is enough to check~(\ref{tempo}) for 
$w\in \Howl_{\fg^{\#}}(k\Lambda_0-\mu_A)$.

Fix $k\in\mathbb{Z}_+$ with $k+h^{\vee}\not=0$.
Since $k\Lambda_0$ is $\fg^{\#}$-dominant we have
$$\Howl_{\fg^{\#}}(k\Lambda_0-j\delta)=\Howl_{\fg^{\#}}(k\Lambda_0)=\{\Id, s_0\}.$$
Since $\alpha_0\in\Sigma$, we have $w\circ\eta=w.\eta$ for any $\eta\in\fh^*$
and $w\in \Howl_{\fg^{\#}}(k\Lambda_0-j\delta)$.
Therefore
$$||w\circ (k\Lambda_0-j\delta)+\rho||^2=||w.(k\Lambda_0-j\delta)+\rho||^2
=||k\Lambda_0-j\delta+\rho||^2\leq ||k\Lambda_0+\rho||^2$$
and  the above inequality is strict  for $j>0$ (since $k+h^{\vee}>0$).
Hence~(\ref{tempo}) holds for $\mu_A\in\mathbb{Z}_{\geq 0}\delta$.

The above argument reduces (\ref{tempo})  to the following assertion:
for all  $A\in\cA_+$ such that $\mu_A\not\in \mathbb{Z}_{\geq 0}\delta$
 we have
\begin{equation}\label{eqstar}
0<||k\Lambda_0+\rho||^2-||w\circ (k\Lambda_0-\mu_A)+\rho||^2 \ \text{ for all }\ \ 
w\in \Howl_{\fg^{\#}}(k\Lambda_0-\mu_A).
\end{equation}
%

\subsubsection{}
\begin{lem}{lemcheck!}
Set
$\ u_A:=2(k+h^{\vee})(\Lambda_0,\mu_A)-(1-h^{\vee})(\mu_A,\theta)
-||\mu_A||^2$. Then for each $A\in\cA_+$ the inequalities 
\begin{equation}\label{check!}
 \begin{array}{lcl}
   0<u_A, & & 
 2(2-h^{\vee}) (-\mu_A,\theta) \leq u_A\end{array}\end{equation} 
imply~(\ref{eqstar}).
\end{lem}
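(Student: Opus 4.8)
The plan is to use that unitary type means $\dot\Delta^{\#}=\{\pm\theta\}$, so $\dot{\fg}^{\#}=\fsl_2$ and $\fg^{\#}$ is the affine algebra $\widehat{\fsl}_2$ with simple roots $\theta$ and $\alpha_0=\delta-\theta$. Write $\eta:=k\Lambda_0-\mu_A$ and $D_w:=||k\Lambda_0+\rho||^2-||w\circ\eta+\rho||^2$, so that~(\ref{eqstar}) asks for $D_w>0$ for all $w\in\Howl_{\fg^{\#}}(\eta)$. First I would linearize $D_w$: every such $w$ is a subquotient weight of a highest weight $\fg^{\#}$-module, so $\eta-w\circ\eta=p\alpha_0+q\theta$ with $p,q\in\mathbb{Z}_{\geq0}$. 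Applying~(\ref{wcirceta}) with $\fl=\fg^{\#}$ gives $D_w=D_{\Id}+2(\rho-\rho_{\fg^{\#}},\eta-w\circ\eta)$. From $(\rho,\delta)=h^{\vee}$, $(\dot{\rho},\theta)=h^{\vee}-1$, $(\rho_{\fg^{\#}},\delta)=2$, $(\rho_{\fg^{\#}},\theta)=1$ one computes $(\rho-\rho_{\fg^{\#}},\alpha_0)=0$ and $(\rho-\rho_{\fg^{\#}},\theta)=h^{\vee}-2$, hence $D_w=D_{\Id}+2(h^{\vee}-2)q$. Since $h^{\vee}\leq2$ for all $\dot{\fg}$ in~(\ref{list1}), this correction is $\leq0$, and the statement reduces to (a) $D_{\Id}>0$ and (b) an upper bound on the largest $q$ that occurs.

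For (a) I would show $D_{\Id}\geq u_A$, so that the first inequality in~(\ref{check!}) suffices. Expanding $D_{\Id}=2(k\Lambda_0+\rho,\mu_A)-||\mu_A||^2$ and using $(\rho,\mu_A)=h^{\vee}(\Lambda_0,\mu_A)+(\dot{\rho},\dot{\mu}_A)$ gives $D_{\Id}-u_A=2(\dot{\rho},\dot{\mu}_A)-(h^{\vee}-1)(\dot{\mu}_A,\theta)=2(\pr_{\theta^{\perp}}\dot{\rho},\dot{\mu}_A)$. Here $\pr_{\theta^{\perp}}\dot{\rho}=\rho^{\natural}$, the Weyl vector of the centralizer $\dot{\fg}^{\natural}$: indeed $\dot{\fg}_{1/2}$ is a self-dual $\dot{\fg}^{\natural}$-module, so the sum of its roots lies in $\mathbb{C}\theta$, forcing $\dot{\rho}-\rho^{\natural}\in\mathbb{C}\theta$. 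For unitary type $\dot{\fg}^{\natural}$ is purely even with negative-definite form, and $-\dot{\mu}_A$ is $\dot{\fg}^{\natural}$-dominant, so $(\rho^{\natural},\dot{\mu}_A)\geq0$; thus $D_{\Id}\geq u_A>0$.

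For (b) I would determine $\Howl_{\fg^{\#}}(\eta)$ explicitly. Since $m:=(-\mu_A,\theta)=\eta(\theta^{\vee})\geq0$ for $A\in\cA_+$, the module $N_{\fg^{\#}}(\eta)$ is the generalized Verma module $M_{\fg^{\#}}(\eta)/M_{\fg^{\#}}(s_{\theta}\circ\eta)$ for $\widehat{\fsl}_2$, where $s_{\theta}\circ\eta=\eta-(m+1)\theta$ is the weight of the $\dot{\fsl}_2$-singular vector. Comparing the $\dot{\fsl}_2$-dominant weights $\leq\eta$ in the dot-orbit with those lying below $s_{\theta}\circ\eta$, exactly one survives besides $\eta$, so $N_{\fg^{\#}}(\eta)$ has length at most two. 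Writing the second factor as $w'\circ\eta$ with $\eta-w'\circ\eta=p\alpha_0+q_{\max}\theta$, one finds $q_{\max}=r(n_0-1)$, where $n_0=\lceil(m+1)/(k+2)\rceil$ and $r=(k+2)n_0-(m+1)\in\{0,\ldots,k+1\}$. The identity $m-q_{\max}=n_0(k+2-r)-1\geq n_0-1\geq0$ then yields $q_{\max}\leq m$, strictly whenever $q_{\max}>0$.

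Finally I would combine the pieces. For $w$ with $q=0$ we get $D_w=D_{\Id}>0$ by (a). For the second factor $w'$ with $q_{\max}>0$ (so $q_{\max}<m$, and necessarily $h^{\vee}<2$) we get $D_{w'}=D_{\Id}-2(2-h^{\vee})q_{\max}\geq u_A-2(2-h^{\vee})q_{\max}>u_A-2(2-h^{\vee})m\geq0$, the last step being the second inequality in~(\ref{check!}) with $(-\mu_A,\theta)=m$; and for $h^{\vee}=2$ one simply has $D_{w'}=D_{\Id}>0$. This establishes~(\ref{eqstar}). I expect the main obstacle to be the third step: pinning down $\Howl_{\fg^{\#}}(\eta)$ and the exact value of $q_{\max}$ from the affine $\fsl_2$ linkage precisely enough that the crude bound $q_{\max}\leq m$ is available, since it is exactly this bound that allows the single scalar inequality~(\ref{check!}) to control all the reflections at once.
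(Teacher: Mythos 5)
Your proposal is correct and takes essentially the same route as the paper: the same linearization via~(\ref{wcirceta}), the same bound $D_{\Id}\geq u_A$ (which the paper obtains from~\Lem{lemrho-} and~(\ref{lemeta}), you from self-duality of $\dot{\fg}_{1/2}$ — the identical fact that $\dot{\rho}$ minus the Weyl vector of the centralizer lies in $\mathbb{C}\theta$), and the same explicit $\hat{\fsl}_2$ analysis, where your $n_0$, $r$, $q_{\max}=r(n_0-1)$ and the bound $q_{\max}<m$ reproduce exactly the paper's Appendix (\Lem{lemsl2} and \Cor{corsl2need}). The only cosmetic differences are that you track the coefficient $q$ of $\theta$ directly where the paper pairs with $\Lambda_0+\frac{\theta}{2}$ (the same number), and your case split on $q$ treats the degenerate situation $q_{\max}=0$, $w\neq\Id$ slightly more explicitly than the paper's citation of the strict inequality.
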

\begin{proof}
Fix $A\in\cA$ and set 
$$\lambda_A:=k\Lambda_0-\mu_A,\ \  \ \ 
u:=||k\Lambda_0+\rho||^2-||\lambda_A+\rho||^2.$$

By~(\ref{wcirceta}) we have
$$
||k\Lambda_0+\rho||^2-||w\circ \lambda_A+\rho||^2=u+2(\rho-\rho^{\#}, \lambda_A-w\circ\lambda_A)$$
so~(\ref{eqstar}) can be rewritten as
$$2(\rho^{\#}-\rho, \lambda_A-w\circ\lambda_A)<u.$$

One has $u=2(k\Lambda_0+\rho,\mu_A)-||\mu_A||^2$.
Recall that $\rho=h^{\vee}\Lambda_0+\dot{\rho}$. 
 By~(\ref{lemeta}) for our choice of $\dot{\Sigma}$ 
 we have $(2\dot{\rho},-\mu_A)\geq (1-h^{\vee})(-\mu_A,\theta)$ 
 (since $-\mu_A$ is $\dot\fg_{\ol{0}}$-dominant for $A\in\cA_+$). Thus
$$2(k\Lambda_0+\rho,\mu_A)-||\mu_A||^2\geq 
2(k+h^{\vee})(\Lambda_0,\mu_A)-(1-h^{\vee})(\mu_A,\theta)
-||\mu_A||^2=u_A.$$
Hence $u\geq u_A$, so  it is enough to check that
\begin{equation}\label{rhoru}
2(\rho^{\#}-\rho, \lambda_A-w\circ\lambda_A)<u_A.
\end{equation}

We have $(\rho^{\#}-\rho,\alpha_0)=0$ and $(\rho^{\#}-\rho,\delta)=
2-h^{\vee}$. Since $(\Lambda_0+\frac{\theta}{2},\alpha_0)=0$,  
the vector $(\rho^{\#}-\rho)-
(2-h^{\vee})(\Lambda_0+\frac{\theta}{2})$ is orthogonal to $\alpha_0$ and to $\delta$.
Since  $w\in W(\fg^{\#})=W(\hat{\fsl}_2)$,  the element
 $\lambda_A-w\circ\lambda_A$ is a linear combination of $\delta$ and 
$\alpha_0$, so 
$$(\rho^{\#}-\rho, \lambda_A-w\circ\lambda_A)=(2-h^{\vee})(\Lambda_0+\frac{\theta}{2},  \lambda_A-w\circ\lambda_A).$$

Now assume that~(\ref{check!}) holds. The inequality $u_A>0$ gives~(\ref{rhoru})  for 
 $\lambda_A=w\circ\lambda_A$. 
 Consider the case when  $\lambda_A\not=w\circ\lambda_A$.
Then~\Cor{corsl2need}  from the Appendix gives
$$(\Lambda_0+\frac{\theta}{2}, \lambda_A-w\circ\lambda_A)<(\lambda_A,\theta).$$
Since  $h^{\vee}<2$ we obtain
$$(\rho^{\#}-\rho, \lambda_A-w\circ\lambda_A)<(2-h^{\vee})(\lambda_A,\theta)=
(2-h^{\vee})(-\mu_A,\theta).$$
Using the second inequality of~(\ref{check!}) we get
$$2(\rho^{\#}-\rho, \lambda_A-w\circ\lambda_A)<
2(2-h^{\vee})(-\mu_A,\theta)\leq u_A.$$
Thus~(\ref{check!}) forces~(\ref{rhoru}).
\end{proof}

In~\Cor{cor7}  we will show that~(\ref{check!}) holds if  $k+2h^{\vee}\geq \frac{5}{2}$.
This will complete the proof  of~\Thm{thm2}.

 \subsection{Remark}

The proof does not work for  $k+2h^{\vee}=2$
and we do not know whether $V^{2-2h^{\vee}}$ is of length two.
Indeed, take
$$A:=\{\delta+\beta|\ \beta\in \dot{\Delta}_1\ \text{ s.t. }(\beta,\theta)<0\}.$$

Since $(\beta,\theta)=\pm 1$ for all
 $\beta\in \dot{\Delta}_1$ we have
 $$\mu_A=l\delta-\frac{l}{2}\theta \ \ \text{ where }\ \ \ 
l:=\frac{\dim{\dot{\fg}}_1}{2}=4-2h^{\vee}$$
so $k+2=l=(-\mu_A,\theta)$. 
By~\Lem{lemsl2} from the Appendix, $\Howl_{\fg^{\#}}(k\Lambda_0-\mu_A)$ contains
$s_{2\delta-\theta}$. One has
$s_{2\delta-\theta}\circ (k\Lambda_0-\mu_A)=s_{2\delta-\theta}.k\Lambda_0$, 
in particular,
 $\ ||s_{2\delta-\theta}\circ (k\Lambda_0-\mu_A)+\rho||^2=||k\Lambda_0+\rho||^2$, so~(\ref{tempo})
 is not strict though $\mu_A\not=0$ and $w\circ (k\Lambda_0-\mu_A)\not\in\{k\Lambda_0, s_0.k\Lambda_0\}$.

\subsection{Filtration on $V^k$}\label{filter}
%
%
Set
$$\htt(\sum\limits_{\alpha\in\Sigma} n_{\alpha}\alpha):=\sum\limits_{\alpha\in\Sigma} n_{\alpha},
\ \ \cA^{(n)}:=\{A\in \cA|\ \htt \mu_A=n\}.$$ 

Note that 
$\mu_A\in\mathbb{Z}_{\geq 0}\Sigma$ for each $A\in\cA$. In particular, $\cA^{(n)}$ is empty
for $n\not\in\mathbb{Z}_{\geq 0}$ and $\cA^{(0)}=\{\emptyset\}$ and each  $\cA^{(n)}$ is finite. 
This allows to 
enumerate  $\cA$ in the following  way:
 we set $A_1:=\emptyset$, then 
we enumerate arbitrarily all elements in $\cA^{(1)}$, then all elements in $\cA^{(2)}$
and so on. For this enumeration
$\htt\mu_{A_j}>\htt\mu_{A_i}$ forces $ j>i$.
 Since $\nu-\nu'  \in \mathbb{Z}_{\geq 0}\Delta_{\ol{0}}^+$ implies
$\htt\nu>\htt\nu'$ or $\nu=\nu'$ we have 
\begin{equation}\label{enu}
\mu_{A_j}-\mu_{A_i}  \in \mathbb{Z}_{\geq 0}\Delta_{\ol{0}}^+\  \ \Longrightarrow\ \ j\geq i.\end{equation}

\subsubsection{}
For each $\beta\in\Delta_1$ we fix a non-zero element $f_{-\beta}\in\fg_{-\beta}$
and for each $A\in\cA$ we fix $f_A:=\prod\limits_{\beta\in A} f_{-\beta}$ where the product is taken
in any order. By~(\ref{enu})  for $e\in\fn^+_0$ we have
$$[e,f_{A_j}]\in \sum\limits_{i<j}  \cU(\fn^-_0) f_{A_i}+\cU(\fh+\fn^+).$$

Let $v_0$ be the highest weight vector of $V^k$. For $j=1,2\ldots$ let
$N_j$ be the $\fg_{\ol{0}}$-submodule generated by the vectors $f_{A_1}v_0,\ldots,f_{A_j}v_0$.
By above,
$$[e,f_{A_j}]v_0\in \sum\limits_{i<j}  \cU(\fn^-_0) f_{A_i}v_0$$
so the image of $f_{A_j}v_0$ is the primitive vector in 
$N_{j+1}/N_j$.
Thus $N_{j+1}/N_j$ is a quotient of 
$M_{\fg_{\ol{0}}}(k\Lambda_0-\mu_{A_j})$.
Since $V^k$ is $\dot\fg_{\ol{0}}$-integrable,  this quotient is
$\dot\fg_{\ol{0}}$-integrable, so $N_{j+1}/N_j$ is a quotient of $N_{\fg_{\ol{0}}}(k\Lambda_0-\mu_{A_j})$.

\subsubsection{}
\begin{cor}{corfiltr}
The module $V^k$ admits an increasing  filtration by $\fg_{\ol{0}}$-submodules
$$0=N_0\subset N_1\subset N_2\subset \ldots$$
where $N_{j+1}/N_j$ is a 
quotient of $N_{\fg_{\ol{0}}}(k\Lambda_0-\mu_{A_j})$. In particular, $N_{j+1}=N_j$ for $A_j\not\in\cA_+$.
\end{cor}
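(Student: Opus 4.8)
The construction of the submodules $N_j$ and the identification of the successive subquotients has essentially been carried out in the preceding paragraph of Section~\ref{filter}; the Corollary records this together with two additional points that must be verified: that the chain $N_0\subset N_1\subset\cdots$ exhausts $V^k$, and that the subquotient vanishes whenever $A_j\notin\cA_+$. The plan is therefore to (a) prove exhaustion by a PBW argument, (b) recall the subquotient identification, and (c) deduce the vanishing from the dominance criterion for $N_{\fg_{\ol{0}}}$.

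\textbf{Exhaustion.} Since the generator $v_0$ of the vacuum module $V^k$ is annihilated by $\dot{\fg}$ (the mode-$0$ part) and by all positive loop modes, the PBW theorem gives $V^k=\cU(\fg_{<0})v_0$, where $\fg_{<0}:=t^{-1}\dot{\fg}[t^{-1}]$ is the negative loop part. Its odd part $(\fg_{<0})_{\ol{1}}$ has root set $-(\Delta_{\ol{1}}^+\setminus\dot{\Delta})=-(\dot{\Delta}_{\ol{1}}+\mathbb{Z}_{>0}\delta)$, so the monomials $f_A$ with $A\in\cA$ form a PBW basis of $\cU\bigl((\fg_{<0})_{\ol{1}}\bigr)$. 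Choosing a PBW ordering with the even generators of $\fg_{<0}$ to the left, every element of $V^k$ is a $\cU\bigl((\fg_{<0})_{\ol{0}}\bigr)$-linear combination of the vectors $f_Av_0$, $A\in\cA$. As $(\fg_{<0})_{\ol{0}}\subset\fg_{\ol{0}}$, this shows $V^k=\sum_{A\in\cA}\cU(\fg_{\ol{0}})f_Av_0$, i.e.\ $\bigcup_j N_j=V^k$.

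\textbf{Subquotients and vanishing.} By the preceding paragraph, $N_{j+1}/N_j$ is generated by the image of $f_{A_j}v_0$, a primitive vector of weight $k\Lambda_0-\mu_{A_j}$ (primitivity being exactly the content of~(\ref{enu}), which forces $[e,f_{A_j}]v_0\in N_j$). Since $V^k$ is $\dot{\fg}_{\ol{0}}$-integrable, so is this cyclic subquotient, whence it is a quotient of $N_{\fg_{\ol{0}}}(k\Lambda_0-\mu_{A_j})$. Finally, $N_{\fg_{\ol{0}}}(\lambda)\neq 0$ iff $\lambda$ is $\dot{\fg}_{\ol{0}}$-dominant; because $\Lambda_0$ vanishes on $\dot{\fh}$, the weight $k\Lambda_0-\mu_{A_j}$ is $\dot{\fg}_{\ol{0}}$-dominant iff $-\mu_{A_j}$ is. For $A_j\notin\cA_+$ — necessarily with $\mu_{A_j}\neq 0$, since $\mu_A=0$ only for $A=\emptyset$ — the weight $-\mu_{A_j}$ fails to be $\dot{\fg}_{\ol{0}}$-dominant, so $N_{\fg_{\ol{0}}}(k\Lambda_0-\mu_{A_j})=0$ and hence $N_{j+1}=N_j$.

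\textbf{Main obstacle.} The substantive point is the exhaustion step: one must justify that the even loop generators can be commuted to the left in the PBW ordering and that the remaining odd monomials are precisely the $f_A$ indexed by $\cA$, so that $V^k$ is generated over $\fg_{\ol{0}}$ by $\{f_Av_0\}$. The remaining steps are bookkeeping, modulo the (standard) primitivity input from~(\ref{enu}) and the dominance criterion for $N_{\fg_{\ol{0}}}$ recalled earlier.
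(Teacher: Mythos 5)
Your argument coincides with the paper's: the Corollary is stated there as a summary of the construction immediately preceding it (the height-compatible enumeration of $\cA$, the vectors $f_{A_j}v_0$, primitivity via~(\ref{enu}), and $\dot\fg_{\ol{0}}$-integrability), and your subquotient step reproduces exactly that. Your exhaustion step is a genuine addition: the paper never checks $\bigcup_j N_j=V^k$, which is needed for the word ``filtration'' and for the multiplicity bound drawn from the Corollary in Section~\ref{plan}, and your PBW argument settles it correctly, since $V^k=\cU(\fg_{<0})v_0$ and the $f_A$, $A\in\cA$, form a basis of the exterior algebra on $(\fg_{<0})_{\ol{1}}$.

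There is, however, one slip in your vanishing step: you assert that $A_j\notin\cA_+$ forces $\mu_{A_j}\neq 0$. This is false, and for the reason you yourself give: $\mu_A=0$ exactly for $A=\emptyset$, and $\emptyset$ lies in $\cA\setminus\cA_+$ precisely because $\mu_\emptyset=0$. For this term the subquotient is a quotient of $N_{\fg_{\ol{0}}}(k\Lambda_0)\neq 0$ (it is $N_1=\cU(\fg_{\ol{0}})v_0$), so it does not vanish. The correct assertion is that $N_{j+1}=N_j$ for all $A_j\notin\cA_+$ with $A_j\neq\emptyset$; equivalently, the nonzero steps of the filtration are indexed by $\cA_+\cup\{\emptyset\}$, which is also how the Corollary is used in Section~\ref{plan}, where the case $\mu_A=0$ is retained. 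To be fair, this imprecision is already present in the Corollary's own wording, which in addition carries an innocuous index shift that your write-up inherits: with $N_j$ generated by $f_{A_1}v_0,\ldots,f_{A_j}v_0$, the subquotient attached to $A_j$ is $N_j/N_{j-1}$ rather than $N_{j+1}/N_j$, so ``$N_{j+1}/N_j$ is generated by the image of $f_{A_j}v_0$'' should be read with that shift. Apart from this boundary case, your proof is correct and follows the same route as the paper.
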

%
%
%
%
%

\subsection{Structure of $N_{\fg_{\ol{0}}}(\lambda)$}
One has  $N_{\fg_{\ol{0}}}(\lambda)=0$ if $\lambda$ is not $\dot\fg_{\ol{0}}$-dominant; if $\lambda$ is  
$\dot\fg_{\ol{0}}$-dominant, then
$$N_{\fg_{\ol{0}}}(\lambda)=M_{\fg_{\ol{0}}}(\lambda)/\sum\limits_{\alpha\in \Sigma(\dot\fg_{\ol{0}})} 
M(s_{\alpha}\circ \lambda).$$

\subsubsection{}
\begin{lem}{lemnu=0}
If $[N_{\fg_{\ol{0}}}(\lambda):L_{\fg_{\ol{0}}}(\lambda-\nu)]\not=0$ and $\nu\in \mathbb{Z}\Delta(\dot\fg_{\ol{0}})$,
then  $\nu=0$.
\end{lem}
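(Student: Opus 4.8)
The plan is to read off the claim from the $d$-grading (equivalently, the grading by powers of $t$) of $N_{\fg_{\ol{0}}}(\lambda)$, restricting attention to its top graded component. If $\lambda$ is not $\dot\fg_{\ol{0}}$-dominant then $N_{\fg_{\ol{0}}}(\lambda)=0$ and there is nothing to prove, so assume $\lambda$ is $\dot\fg_{\ol{0}}$-dominant. Every weight of $N_{\fg_{\ol{0}}}(\lambda)\subseteq M_{\fg_{\ol{0}}}(\lambda)$ has the form $\lambda-\mu$ with $\mu\in\mathbb{Z}_{\geq 0}\Sigma(\fg_{\ol{0}})$, and the only simple roots of $\fg_{\ol{0}}$ with $\langle\mu,d\rangle\neq 0$ are the affine ones $\alpha_0=\delta-\theta$ (one per affine component). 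A vector $\nu\in\mathbb{Z}\Delta(\dot\fg_{\ol{0}})$ is a combination of finite even roots, so $\langle\nu,d\rangle=\langle\nu,K\rangle=0$; hence if $[N_{\fg_{\ol{0}}}(\lambda):L_{\fg_{\ol{0}}}(\lambda-\nu)]\neq 0$ then necessarily $\nu\in\dot{Q}^+$ and $\lambda-\nu$ lies in the top $d$-level. Write $N[0]$ for this top graded component, i.e. the $\delta$-degree-zero part; it is a $\dot\fg_{\ol{0}}$-submodule of $N_{\fg_{\ol{0}}}(\lambda)$.

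The key computation is the identification $N[0]=L_{\dot\fg_{\ol{0}}}(\dot\lambda)$, where $\dot\lambda=\lambda|_{\dot\fh}$. First, the top graded component of $M_{\fg_{\ol{0}}}(\lambda)$ equals $\cU(\dot\fn^-)v_0\cong M_{\dot\fg_{\ol{0}}}(\dot\lambda)$, since in the triangular decomposition $\fn^-=\dot\fn^-\oplus\bigoplus_{m>0}\dot\fg_{\ol{0}}\otimes t^{-m}$ any PBW monomial strictly lowers the $d$-level unless all its factors lie in $\dot\fn^-$. Passing to the quotient $N_{\fg_{\ol{0}}}(\lambda)=M_{\fg_{\ol{0}}}(\lambda)/\sum_{\alpha\in\Sigma(\dot\fg_{\ol{0}})}M(s_\alpha\circ\lambda)$: each singular vector $v_{s_\alpha\circ\lambda}$ already lies at the top $d$-level (as $\alpha$ is a finite root), and there it generates the $\dot\fg_{\ol{0}}$-submodule $M_{\dot\fg_{\ol{0}}}(s_\alpha\circ\lambda)$, so $N[0]=M_{\dot\fg_{\ol{0}}}(\dot\lambda)/\sum_\alpha M_{\dot\fg_{\ol{0}}}(s_\alpha\circ\lambda)=L_{\dot\fg_{\ol{0}}}(\dot\lambda)$. (Alternatively, $N[0]$ is a finite-dimensional cyclic highest-weight module over the reductive algebra $\dot\fg_{\ol{0}}$, hence irreducible by Weyl's complete reducibility.) In particular $N[0]$ is irreducible as a $\dot\fg_{\ol{0}}$-module.

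Finally I would transfer multiplicities from $\fg_{\ol{0}}$ to $\dot\fg_{\ol{0}}$ using the top-component functor $M\mapsto M[0]$. This functor is exact and $\dot\fg_{\ol{0}}$-equivariant, being projection onto a direct summand of the $d$-grading; it sends an irreducible $L_{\fg_{\ol{0}}}(\mu)$ with $\mu$ at the top level to $L_{\dot\fg_{\ol{0}}}(\dot\mu)$ (the top graded piece of an irreducible highest-weight $\fg_{\ol{0}}$-module is the finite-dimensional irreducible of the same highest weight, by the same submodule argument as above) and sends $L_{\fg_{\ol{0}}}(\mu)$ with $\mu$ strictly below the top level to $0$. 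Applying it to $N_{\fg_{\ol{0}}}(\lambda)$ yields, for every $\mu$ at the top level, $[N_{\fg_{\ol{0}}}(\lambda):L_{\fg_{\ol{0}}}(\mu)]=[N[0]:L_{\dot\fg_{\ol{0}}}(\dot\mu)]$ (using that, with the level and $d$-value fixed, $\mu$ is determined by $\dot\mu$). Since $N[0]=L_{\dot\fg_{\ol{0}}}(\dot\lambda)$ is irreducible, the right-hand side is $0$ unless $\dot\mu=\dot\lambda$; taking $\mu=\lambda-\nu$ forces $\nu=0$. The step needing the most care is this last bookkeeping, because $N_{\fg_{\ol{0}}}(\lambda)$ generally has infinite length: one must note that at each $d$-level there are only finitely many weights, each of finite multiplicity, so $N[0]$ is finite-dimensional and only finitely many composition factors reach the top level, which is exactly what makes the exactness of $M\mapsto M[0]$ and the multiplicity comparison legitimate.
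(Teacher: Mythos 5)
Your proof is correct and is essentially the paper's own argument: the paper defines the same grading (the degree of the weight-$(\lambda-\mu)$ space is $(\mu,\Lambda_0)$, which is exactly your $d$-grading), identifies the degree-zero component of $N_{\fg_{\ol{0}}}(\lambda)$ with $L_{\dot\fg_{\ol{0}}}(\lambda)$, and concludes; your exactness and multiplicity bookkeeping simply makes explicit the step the paper compresses into ``This implies the claim.'' One cosmetic slip: $N_{\fg_{\ol{0}}}(\lambda)$ is a \emph{quotient} of $M_{\fg_{\ol{0}}}(\lambda)$, not a submodule, so you should say its weights are contained among those of the Verma module rather than writing $N_{\fg_{\ol{0}}}(\lambda)\subseteq M_{\fg_{\ol{0}}}(\lambda)$.
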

\begin{proof}
If $\lambda$ is not $\dot\fg_{\ol{0}}$-dominant, then $N_{\fg_{\ol{0}}}(\lambda)=0$.
Assume that $\lambda$ is $\dot\fg_{\ol{0}}$-dominant.
Define a $\mathbb{Z}_{\geq 0}$-grading on $N_{\fg_{\ol{0}}}(\lambda)$ by letting the degree of
the vector of weight $\lambda-\mu$ be equal to $(\mu,\Lambda_0)$.
The homogeneous component of degree zero is 
$$M_{\dot\fg_{\ol{0}}}(\lambda)/\sum\limits_{\alpha\in \Sigma(\dot\fg_{\ol{0}})} 
M_{\dot\fg_{\ol{0}}}(s_{\alpha}\circ \lambda)\cong L_{\dot\fg_{\ol{0}}}(\lambda).$$
This implies the claim.
\end{proof}

\subsubsection{}
\begin{lem}{lemMlLl}
One has: $\Howl_{\fg_{\ol{0}}}(\lambda)=\prod\limits_{i=1}^{s}  \Howl_{\fl^{i)}}(\lambda)$ for all $\lambda\in\fh^*$.
\end{lem}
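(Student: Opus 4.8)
We must show $\Howl_{\fg_{\ol 0}}(\lambda)=\prod_{i=1}^{s}\Howl_{\fl^{(i)}}(\lambda)$ for all $\lambda\in\fh^*$. Let me think about what the two sides mean and how the factorization of the Weyl group $W=\prod_i W(\fl^{(i)})$ propagates to the multiplicity sets.

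Recall $\Howl_{\fg_{\ol 0}}(\lambda)=\{w\in W:[N_{\fg_{\ol 0}}(\lambda):L_{\fg_{\ol 0}}(w\circ\lambda)]\ne 0\}$, and $\fg_{\ol 0}$ is the direct product (as a Lie algebra, with the caveat that the affine components share $K$ and $d$) of the affine components $\fl^{(1)},\dots,\fl^{(s)}$. The decomposition $\dot\fg_{\ol 0}=\prod_i\dot\fl^{(i)}$ and $\dot\fh=\bigoplus_i\dot\fh^{(i)}$ means the Cartan is shared only through $\mathbb C K\oplus\mathbb C d$, and each $W(\fl^{(i)})$ acts trivially on the other factors' root directions.

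**Plan.** I want to reduce a multiplicity in $N_{\fg_{\ol 0}}(\lambda)$ to a product of multiplicities in the $N_{\fl^{(i)}}$. The key structural input is the explicit formula stated just above this lemma,
$$N_{\fg_{\ol 0}}(\lambda)=M_{\fg_{\ol 0}}(\lambda)\big/\sum_{\alpha\in\Sigma(\dot\fg_{\ol 0})}M(s_\alpha\circ\lambda),$$
together with the parallel formula for each affine component, and the fact that $\Sigma(\dot\fg_{\ol 0})=\bigsqcup_i\Sigma(\dot\fl^{(i)})$ partitions the simple roots of the even part. First I would observe that $N_{\fg_{\ol 0}}(\lambda)$, as a module over $\fg_{\ol 0}=\prod_i\fl^{(i)}$, should decompose as an outer tensor product $\bigotimes_i N_{\fl^{(i)}}(\lambda|_{\fh\cap\fl^{(i)}})$ of the maximal $\dot\fl^{(i)}$-locally-finite quotients of the respective Verma modules. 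This is the heart of the matter: the universal/maximal property defining $N$ (``maximal quotient locally finite as $\dot\fg_{\ol 0}$-module'') is multiplicative because $\dot\fg_{\ol 0}$-local-finiteness is equivalent to $\dot\fl^{(i)}$-local-finiteness for every $i$ simultaneously, and $M_{\fg_{\ol 0}}(\lambda)\cong\bigotimes_i M_{\fl^{(i)}}(\lambda)$ as an outer tensor product of Verma modules over the commuting factors.

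**Key steps, in order.** (1) Establish $M_{\fg_{\ol 0}}(\lambda)\cong\bigotimes_i M_{\fl^{(i)}}(\lambda|_{\fh\cap\fl^{(i)}})$ as $\prod_i\fl^{(i)}$-modules; this is the standard fact that a Verma module over a direct sum of the relevant nilpotent radicals factors, using that the shared $\mathbb C K\oplus\mathbb C d$ only shifts the weight. (2) Pass to the maximal locally-finite quotient: since local finiteness under $\dot\fg_{\ol 0}=\prod\dot\fl^{(i)}$ factors through each factor, and since by the quotient description $N_{\fg_{\ol 0}}(\lambda)$ is obtained by quotienting out exactly the submodules $M(s_\alpha\circ\lambda)$ for $\alpha$ ranging over $\bigsqcup_i\Sigma(\dot\fl^{(i)})$, conclude $N_{\fg_{\ol 0}}(\lambda)\cong\bigotimes_i N_{\fl^{(i)}}(\lambda)$. (3) Compute composition factors of an outer tensor product: $L_{\fg_{\ol 0}}(\mu)\cong\bigotimes_i L_{\fl^{(i)}}(\mu|_{\fh\cap\fl^{(i)}})$, and the simple constituents of $\bigotimes_i N_{\fl^{(i)}}$ are precisely the outer tensor products of simple constituents of the factors, with multiplicity the product of multiplicities. (4) Translate into the $\circ$-action: since $w=\prod_i w_i$ with $w_i\in W(\fl^{(i)})$ and $w_i$ fixes the weight in the other factors' directions, $w\circ\lambda$ restricts to $w_i\circ(\lambda|_{\fh\cap\fl^{(i)}})$ on each factor. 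Hence $[N_{\fg_{\ol 0}}(\lambda):L_{\fg_{\ol 0}}(w\circ\lambda)]=\prod_i[N_{\fl^{(i)}}(\lambda):L_{\fl^{(i)}}(w_i\circ\lambda)]$, and this is nonzero exactly when every factor is nonzero, i.e. $w\in\prod_i\Howl_{\fl^{(i)}}(\lambda)$.

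**Main obstacle.** The delicate point is the ``shared center'' structure: the $\fl^{(i)}$ are not honestly independent affine Lie algebras — they are glued along $K$ and $d$, identifying $K_i\sim K$ and $d_i\sim d$. So the outer tensor product must be taken with care: the weights add in the $\mathbb C K\oplus\mathbb C d$ directions while each $W(\fl^{(i)})$ leaves $K$ fixed and shifts $d$ only through the imaginary root $\delta$, which is common to all factors. I would verify explicitly that $w\circ$ acts compatibly, namely that $w\circ\lambda$ and $\bigoplus_i(w_i\circ(\lambda|_{\fh\cap\fl^{(i)}}))$ agree on all of $\fh$ (including the overlap in $\delta$ and $\Lambda_0$), using that the nontrivial action of each $w_i$ lives in $\fh\cap\fl^{(i)}$ plus the common $\delta$-direction and the $\rho_{\fl^{(i)}}$-shift is consistent with the $\circ$-action defined earlier in the excerpt. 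Granting this compatibility, the multiplicativity of composition-series multiplicities in an outer tensor product is routine, and the lemma follows.
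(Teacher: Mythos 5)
Your proposal follows the same skeleton as the paper's proof: factor the Verma module as an outer tensor product over the affine components, deduce the analogous factorization of the maximal locally finite quotients $N$, use multiplicativity of composition multiplicities, and translate back into $\Howl$. However, the one point you defer ("granting this compatibility") is exactly the nontrivial content, and as you have set it up it fails. Your step (1) is false as stated: inside $\fg_{\ol{0}}$ the components $\fl^{(i)}$ are not independent commuting factors but are glued along $\mathbb{C}K\oplus\mathbb{C}d$, so the outer tensor product $\boxtimes_i M_{\fl^{(i)}}(\lambda|_{\fh\cap\fl^{(i)}})$, viewed as a $\fg_{\ol{0}}$-module, does not have highest weight $\lambda$: the element $d$, which corresponds to $\sum_i d_i$, acts on the highest weight line by $s\langle\lambda,d\rangle$ rather than $\langle\lambda,d\rangle$. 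Relatedly, the compatibility you propose to verify is ill-posed: the functionals $w_i\circ(\lambda|_{\fh\cap\fl^{(i)}})$ are defined on overlapping subspaces $\dot{\fh}^{(i)}\oplus\mathbb{C}K\oplus\mathbb{C}d$, and while they agree on $K$ (the level is preserved), they disagree on $d$ in general (each $w_i$ shifts the $d$-value by a different amount), so their "direct sum" is not a well-defined element of $\fh^*$.

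The paper resolves precisely this point by a device you do not supply: it passes to the abstract direct product $\tilde{\fl}=\prod_i\fl^{(i)}$ (with separate $K_i,d_i$), quotients by the central ideal $\fz$ spanned by the $K_i-K_j$, identifies $\fg_{\ol{0}}$ with a subalgebra of $\tilde{\fl}/\fz$, and lifts $\lambda$ to $\tilde{\lambda}$ with $\langle\tilde{\lambda},K_i\rangle=\langle\lambda,K\rangle$ but $\langle\tilde{\lambda},d_i\rangle=\langle\lambda,d\rangle/s$; this makes $M_{\fg_{\ol{0}}}(\lambda)\cong M_{\tilde{\fl}}(\tilde{\lambda})$ and hence $N_{\fg_{\ol{0}}}(\lambda)\cong\boxtimes_i N_{\fl^{(i)}}(\tilde{\lambda}_i)$ legitimate. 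This produces $\Howl_{\fg_{\ol{0}}}(\lambda)=\prod_i\Howl_{\fl^{(i)}}(\tilde{\lambda}_i)$ with the lifted weights $\tilde{\lambda}_i$, not with $\lambda_i:=\lambda|_{\fh\cap\fl^{(i)}}$, so a final step is still needed: since $\tilde{\lambda}_i-\lambda_i$ vanishes on $\dot{\fh}^{(i)}$ and on $K_i$, it is proportional to the imaginary root of $\fl^{(i)}$, whence $N_{\fl^{(i)}}(\tilde{\lambda}_i)$ is a one-dimensional twist of $N_{\fl^{(i)}}(\lambda_i)$ and $\Howl_{\fl^{(i)}}(\tilde{\lambda}_i)=\Howl_{\fl^{(i)}}(\lambda_i)$ (using that $\delta$ is $W$-invariant). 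Your outline cannot be completed without some version of this bookkeeping; with it, the argument becomes the paper's proof.
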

\begin{proof}
We retain notation of~\ref{fl} and set $\tilde{\fl}:=\prod\limits_{i=1}^{s}  \fl^{(i)}$.

Denote by $\fz$  the central ideal spanned by
$K_i-K_j$ for $i,j=1,\ldots,s$.
The  algebra $\fg_{\ol{0}}$ can be identified with the 
subalgebra of  $\tilde{\fl}/\fz$ which is spanned by the images of 
 $\dot{\fl}^{(i)}\otimes \mathbb{C}[t^{\pm 1}]\oplus \mathbb{C}K_i$
 for $i=1,\ldots,s$ and by $\sum\limits_{i=1}^{s} d_i$ which identifies
with $d$.

Note that $\tilde{\fh}:=\oplus_{i=1}^{s}  \fh^{(i)}$ is  the Cartan subalgebra of $\tilde{\fl}$.
Fix $\tilde{\lambda}\in \tilde{\fh}^*$ by setting 
$$\langle \tilde{\lambda},K_i\rangle:=\langle \lambda,K\rangle,\ \ \ 
\langle \tilde{\lambda},d_i\rangle:=\frac{\langle \lambda,d\rangle}{s},\ 
\ \ \langle \tilde{\lambda},h\rangle =\langle \lambda,h\rangle\ \ \text{
for }h\in \dot{\fh}.$$  The Verma module $M_{\tilde{\fl}}(\tilde{\lambda})$ 
is a module over $\tilde{\fl}/\fz$ and 
$M_{\fg_{\ol{0}}}(\lambda)=\Res^{\tilde{\fl}/\fz}_{\fg_{\ol{0}}}  M_{\tilde{\fl}/\fz}(\tilde{\lambda})$
so we can identify $M_{\fg_{\ol{0}}}(\lambda)$ with $M_{\tilde{\fl}}(\tilde{\lambda})$. 
In this manner $N_{\fg_{\ol{0}}}(\lambda)$ identifies with
 the maximal $\dot\fg_{\ol{0}}$-integrable quotient of $M_{\tilde{\fl}}(\tilde{\lambda})$, i.e. 
$$N_{\fg_{\ol{0}}}(\lambda)=\Res^{\tilde{\fl}/\fz}_{\fg_{\ol{0}}}M_{\tilde{\fl}}(\tilde{\lambda})/\sum\limits_{\alpha\in {\Sigma}(\dot{\fg_{\ol{0}}})} M_{\tilde{\fl}}(s_{\alpha}\circ \tilde{\lambda}).$$

Since $\tilde{\fl}=\prod\limits_{i=1}^{s }\fl^{(i)}$,  the Verma module
$M_{\tilde{\fl}}(\tilde{\lambda})$ can be identified with  the external tensor product
 $\boxtimes_{i=1}^s M_{\fl^{(i)}}(\tilde{\lambda}_i)$
where $\tilde{\lambda}_i$ is the restriction of $\tilde{\lambda}$ to $\fh^{(i)}$.
Thus $N_{\fg_{\ol{0}}}(\lambda)$ identifies with 
$$M_{\tilde{\fl}}(\tilde{\lambda})/\sum\limits_{\alpha\in \dot{\Sigma}_0} M_{\tilde{\fl}}(s_{\alpha}\circ \tilde{\lambda})\cong
\boxtimes_{i=1}^s\bigl( M_{\fl^{(i)}}(\tilde{\lambda}_i)/\sum\limits_{\alpha\in \Sigma(\dot{\fl}^{(i)})} M_{\tilde{\fl}}(s_{\alpha}\circ \tilde{\lambda}_i)\bigr)=\boxtimes_{i=1}^s N_{\fl^{(i)}}(\tilde{\lambda}_i) .$$
Therefore  $\Howl_{\fg_{\ol{0}}}(\lambda)=\prod\limits _{i=1}^s \Howl_{\fl^{(i)}}(\tilde{\lambda}_i)$.

Recall that we view
 $\fl^{(i)}$ as a subalgebra of $\fg_{\ol{0}}$ (by mapping $K_i$ to $K$ and $d_i$ to $d$).
One has 
 $\langle \tilde{\lambda}_i-\lambda_i,h\rangle=0$ for $h\in \dot{\fh}^{(i)}$ and
 $\langle \tilde{\lambda}_i-\lambda_i,K_i\rangle=0$. Thus
 $\tilde{\lambda}_i-\lambda_i$ is proportional to 
 the minimal  imaginary root in $\Delta(\fl^{(i)})$.
 Then $N_{\fl^{(i)}}$ is the tensor product
 of $N_{\fl^{(i)}}({\lambda}_i)$ and a one-dimensional module $\fl^{(i)}$-module so
 $$\Howl_{\fl^{(i)}}(\tilde{\lambda}_i)=\Howl_{\fl^{(i)}}(\lambda_i).$$
 By definition, $\Howl_{\fl^{(i)}}(\lambda)=\Howl_{\fl^{(i)}}(\lambda_i)$.
 This completes the proof.
\end{proof}

\subsection{Proof of~(\ref{wcirceta})}\label{circle}
Let $\fl$ be one of the affine components of $\fg_{\ol{0}}$. 

 For any $\eta\in\fh^*$ and $w\in W(\fl)$ we have
 $$\begin{array}{l}
 ||\eta+\rho||^2-||w\circ\eta+\rho||^2=\\
 (||\eta+\rho||^2-||\eta+\rho_{\fl}||^2)+(||\eta+\rho_{\fl}||^2-
 ||w\circ\eta+\rho_{\fl}||^2)+( ||w\circ\eta+\rho_{\fl}||^2- ||w\circ\eta+\rho||^2)\\
 =  (||\eta+\rho||^2-||\eta+\rho_{\fl}||^2)-( ||w\circ\eta+\rho||^2- ||w\circ\eta+\rho_{\fl}||^2)\\
 =(2(\rho-\rho_{\fl}, \eta)+||\rho||^2-||\rho_{\fl}||^2)-
 (2(\rho-\rho_{\fl}, w\circ\eta)+||\rho||^2-||\rho_{\fl}||^2)\\
 =2(\rho-\rho_{\fl}, \eta-w\circ\eta).
 \end{array}$$
 Hence 
 $||\eta+\rho||^2-||w\circ\eta+\rho||^2=2(\rho-\rho_{\fl}, \eta-w\circ\eta)$;
 this proves~(\ref{wcirceta}).

\subsection{Proof of~(\ref{w+<w_-})}
Recall that  $W=W^{\#}\times W_-$
where $W^{\#}:=W(\fg^{\#})$ is the subgroup generated by $s_{\alpha}$ with
$(\alpha,\alpha)>0$ and $W_-$ is the subgroup generated by $s_{\alpha}$ with
$(\alpha,\alpha)<0$. 
\subsubsection{}
\begin{lem}{lemcirc}
Assume that for each affine component $\fl\not=\fg^{\#}$  we have 
\begin{equation}\label{assmstar}
 (\rho-\rho_{\fl}, \alpha)\geq 0\ \ 
\text{ if }\alpha\in \Sigma(\dot{\fl})\ \text{ and } \ \ (\rho-\rho_{\fl}, \alpha)>0
\text{ if }\alpha\in \Sigma(\fl\setminus \Sigma(\dot{\fl})).
\end{equation}
Take any $\lambda\in\fh^*$ and $w\in \Howl_{\fg_{\ol{0}}}(\lambda)$. Write $w=w_+w_-$ where 
$w_+\in W(\fg^{\#})$
and $w\in W_-$.  Then
$$
||w\circ \lambda+\rho||^2<||w_+\circ \lambda+\rho||^2\ \text{ for } w_-\circ \lambda\not=\lambda .$$
\end{lem}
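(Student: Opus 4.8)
The plan is to reduce the global norm comparison to the non-positive-length affine components and then to exploit $\dot{\fg}_{\ol{0}}$-integrability there. First I would use the factorization $\Howl_{\fg_{\ol{0}}}(\lambda)=\prod_{i=1}^s\Howl_{\fl^{(i)}}(\lambda)$ of \Lem{lemMlLl} to write $w=\prod_i w^{(i)}$ with $w^{(i)}\in\Howl_{\fl^{(i)}}(\lambda)$, so that $w_+=w^{(1)}\in W(\fg^{\#})$ and $w_-=\prod_{i\geq2}w^{(i)}\in W_-$. Since distinct components carry pairwise orthogonal roots, the operators $w^{(i)}\circ$ commute and each fixes the root spaces of the others; writing $\mu:=w_+\circ\lambda$ this gives $w\circ\lambda=w_-\circ\mu$ and reduces the statement to $||w_-\circ\mu+\rho||^2<||\mu+\rho||^2$ whenever $w_-\circ\lambda\neq\lambda$.

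Next I would establish the telescoping identity obtained by applying~(\ref{wcirceta}) to the components $\fl^{(i)}$, $i\geq2$, one at a time:
\begin{equation*}
||\mu+\rho||^2-||w_-\circ\mu+\rho||^2=\sum_{i\geq2}2(\rho-\rho_{\fl^{(i)}},\,\mu-w^{(i)}\circ\mu).
\end{equation*}
Orthogonality makes each increment depend only on the $i$-th factor, and because $\mu$ and $\lambda$ agree outside the $\fg^{\#}$-directions (which $w^{(i)}$ fixes) one has $\mu-w^{(i)}\circ\mu=\lambda-w^{(i)}\circ\lambda$. Since $w^{(i)}\in\Howl_{\fl^{(i)}}(\lambda)$ places $L_{\fl^{(i)}}(w^{(i)}\circ\lambda)$ inside the highest weight quotient $N_{\fl^{(i)}}(\lambda)$ of $M_{\fl^{(i)}}(\lambda)$, this difference is a non-negative integer combination of the simple roots $\Sigma(\fl^{(i)})$.

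I would then read off the sign from hypothesis~(\ref{assmstar}): pairing $\rho-\rho_{\fl^{(i)}}$ with a finite simple root gives $\geq0$, and with the remaining simple root $\alpha_0^{(i)}\in\Sigma(\fl^{(i)})\setminus\Sigma(\dot{\fl}^{(i)})$ gives $>0$. Hence every summand is $\geq0$, and a summand is strictly positive as soon as the coefficient $n_0^{(i)}$ of $\alpha_0^{(i)}$ in $\lambda-w^{(i)}\circ\lambda$ is positive.

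The main obstacle, and the only step beyond bookkeeping, is to rule out that a nontrivial $w^{(i)}$ contributes through finite simple roots alone, i.e. with $n_0^{(i)}=0$. Here I would invoke integrability: $n_0^{(i)}=0$ means $\lambda-w^{(i)}\circ\lambda\in\mathbb{Z}\Delta(\dot{\fl}^{(i)})\subseteq\mathbb{Z}\Delta(\dot{\fg}_{\ol{0}})$, and, choosing the other factors in the factorization of \Lem{lemMlLl} to be trivial, that $L_{\fg_{\ol{0}}}(w^{(i)}\circ\lambda)$ is a subquotient of $N_{\fg_{\ol{0}}}(\lambda)$; then \Lem{lemnu=0} forces $\lambda-w^{(i)}\circ\lambda=0$. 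Thus $w^{(i)}\circ\lambda\neq\lambda$ already compels $n_0^{(i)}>0$, so whenever $w_-\circ\lambda\neq\lambda$ at least one summand is strictly positive and the telescoping identity yields $||w\circ\lambda+\rho||^2<||w_+\circ\lambda+\rho||^2$, as required.
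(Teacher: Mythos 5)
Your proof is correct and takes essentially the same route as the paper's: the factorization of $\Howl_{\fg_{\ol{0}}}(\lambda)$ via \Lem{lemMlLl}, the component-by-component use of~(\ref{wcirceta}), the observation that $\lambda-w^{(i)}\circ\lambda\in\mathbb{Z}_{\geq 0}\Sigma(\fl^{(i)})$, and \Lem{lemnu=0} to force a positive coefficient of the affine simple root, finishing with~(\ref{assmstar}). The only cosmetic differences are that you organize the comparison as a telescoping sum over all components (the paper argues sequentially for $s=2,3$) and that you apply \Lem{lemnu=0} at the level of $\fg_{\ol{0}}$ by padding the other factors with the identity, where the paper invokes its component analogue directly.
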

\begin{proof}
If $s=1$, then $W(\fg^{\#})=W$, so $w_-=\Id$ and $w_-\circ \lambda=\lambda$.
Consider the case $s>1$, that is $s=2,3$.
Recall that the affine components of $\fg_{\ol{0}}$ are denoted by $\fl^{(1)},\ldots,\fl^{(s)}$
where $\fl^{(1)}=\fg^{\#}$ and $s\leq 3$. 
Write  $w=w_1\ldots w_{s}$ where $w_i\in W(\fl^{(i)})$.   Since $w\in \Howl_{\fg_{\ol{0}}}(\lambda)$, the formula (\ref{howl}) gives
$w_i\in \Howl_{\fl^{(i)}}(\lambda)$. One has
$w_1=w_+$ and $w_-=w_2$ if $s=2$, $w_-=w_2w_3$ if $s=3$.

Set $\rho^{(i)}:=\rho_{\fl^{(i)}}$. By~\ref{circle} we have
\begin{equation}\label{nuka}
||w_1\circ\lambda+\rho||^2-||w_2w_1\circ\lambda+\rho||^2= 2(\rho-\rho^{(2)}, \nu)\end{equation}
for
$\nu:=w_1\circ\lambda-w_2 w_1\circ\lambda$. Since $w_1\in W(\fl^{(1)})$ and $w_2\in W(\fl^{(2)})$ we have
$$\nu=w_1\circ\lambda- w_1w_2 \circ\lambda=w_1(\lambda+\rho^{(1)})-
w_1(w_2\circ\lambda+\rho^{(1)})=w_1(\lambda-w_2\circ\lambda)=\lambda-w_2\circ\lambda.$$

Therefore $\nu=\lambda_2-w_2\circ\lambda_2$
where $\lambda_2:=\lambda|_{\fh\cap \fl^{(2)}}$.
 Since $w_2\in \Howl_{\fl^{(2)}}(\lambda)$, 
one has 
$$[N_{\fl^{(i)} }(\lambda_2):L_{\fl^{(2)}}( \lambda_2-\nu)]\not=0.$$

Assume that $w_2\circ\lambda\not=\lambda$, i.e. $\nu\not=0$. Using~\Lem{lemnu=0} we obtain
$\nu\in \mathbb{Z}_{\geq 0} \Sigma(\fl^{(2)})$ and $\nu\not\in \mathbb{Z}_{\geq 0} \Sigma(\dot{\fl}^{(2)})$.
The assumption~(\ref{assmstar}) forces
$(\rho-\rho^{(2)}, \nu)>0$. Using~(\ref{nuka}) we obtain 
$||w_1\circ\lambda+\rho||^2> ||w_2w_1\circ\lambda +\rho||^2$.
 This completes the proof  for the case $s=2$.
For $s=3$ the same argument gives
$||w_2w_1\circ\lambda +\rho||^2> ||w_3w_2w_1\circ\lambda+\rho||^2$
if  $w_3\circ \lambda\not=\lambda$. This completes the proof.
\end{proof}
%
%

%
%
%
%
%

\subsection{Properties of $\dot{\Sigma}$}\label{Step4}
We write $\Sigma(\dot\fg_{\ol{0}})=\Sigma(\dot\fg_{\ol{0}})_+\coprod \Sigma(\dot\fg_{\ol{0}})_-$, where
$$\Sigma(\dot\fg_{\ol{0}})_{\pm}:=\{\alpha\in\Sigma(\dot\fg_{\ol{0}})| \ \pm (\alpha,\alpha)>0\}.$$
Recall that, since our normalization is of  unitary type ,
\begin{equation}\label{ika}
\Sigma(\dot\fg_{\ol{0}})_+=\{\theta\}. 
\end{equation}
We will use the following assumptions
\begin{enumerate}
\item $\alpha_0=\delta-\theta\in\Sigma$;
\item $\Sigma(\dot\fg_{\ol{0}})_-\subset \dot{\Sigma}$.
\end{enumerate}

Note that $\dot{\Sigma}$ satisfying (ii) exists for any $\dot{\fg}$ (but for
$\dot{\fg}:=\fosp(3|2)$ with the normalization
$||\vareps_1||^2=2$ there is no $\Sigma$ satisfying (i) and (ii)). For all
$\dot{\fg}$  from Table 2 in~\cite{KW} the affine superalgebra $\fg$ admits $\Sigma$ satisfying the above assumptions:
$$\begin{array}{lll}
\fsl(2|m), \mathfrak{psl}(2|2) & \dot{\Sigma}=\{\vareps_1-\delta_1,\delta_1-\delta_2,\ldots, \delta_{n-1}-\delta_n,\delta_n-\vareps_2\}\ & \theta=\vareps_1-\vareps_2\\
\mathfrak{spo}(2|2m+1) &   \dot{\Sigma}=\{\delta_1-\vareps_1,\vareps_1-\vareps_2,\ldots, \vareps_{m-1}-\vareps_m,\vareps_m\}\  &  \theta=2\delta_1\\
\mathfrak{spo}(2|2m) &   \dot{\Sigma}=\{\delta_1-\vareps_1,\vareps_1-\vareps_2,\ldots, \vareps_{m-1}-\vareps_m,\vareps_{m-1}+\vareps_m\} \ & \theta=2\delta_1  \\
D(2|1,a) & \dot{\Sigma}=\{\vareps_1-\vareps_2-\vareps_3,2\vareps_2,2\vareps_3\}\ & \theta=2\vareps_1\\
G(3) & \dot{\Sigma}=\{\delta_1+\vareps_3, \vareps_2-\vareps_1,\vareps_1\}\ & \theta=2\delta_1\\
F(4) &  \dot{\Sigma}=\{\frac{\delta_1-\vareps_1-\vareps_2-\vareps_3}{2},
\vareps_1-\vareps_2, \vareps_2-\vareps_3,\vareps_3\}\ & \theta=\delta_1.
\end{array}$$

We have the following formula for all $\dot{\fg}$ from Table 2 in ~\cite{KW}  (see~\cite{KW},  (5.6))
\begin{equation}\label{strange}
\dim\dot\fg_{\ol{1}}+4 h^{\vee}=8.\end{equation}
Since $\dim\dot\fg_{\ol{1}}\geq 4$, we have $h^{\vee}\leq 1$ for all $\dot{\fg}$ in the above list.

\subsubsection{}
\begin{lem}{lemassmstar}
If $\Sigma$ satisfies (ii) above,  
then~(\ref{assmstar}) holds.
\end{lem}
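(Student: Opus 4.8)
The plan is to compute $(\rho-\rho_{\fl},\alpha)$ directly for the simple roots $\alpha$ of an affine component $\fl=\fl^{(i)}$ with $i\neq 1$. Set $\dot{\rho}_{\ol{1}}:=\frac12\sum_{\beta\in\dot{\Delta}_{\ol{1}}^+}\beta$, so that $\dot{\rho}=\sum_{j=1}^{s}\dot{\rho}_{\fl^{(j)}}-\dot{\rho}_{\ol{1}}$, and write $\rho=h^{\vee}\Lambda_0+\dot{\rho}$, $\rho_{\fl}=h^{\vee}_{\fl}\Lambda_0+\dot{\rho}_{\fl}$, where $\dot{\rho}_{\fl}=\dot{\rho}_{\fl^{(i)}}$ is the Weyl vector of the simple Lie algebra $\dot{\fl}^{(i)}$ and $h^{\vee}_{\fl}$ is the coefficient of $\Lambda_0$ in $\rho_{\fl}$. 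Then
\begin{equation*}
\rho-\rho_{\fl}=(h^{\vee}-h^{\vee}_{\fl})\Lambda_0+\Bigl(\sum_{j\neq i}\dot{\rho}_{\fl^{(j)}}-\dot{\rho}_{\ol{1}}\Bigr).
\end{equation*}
With $(\Lambda_0,\delta)=1$ and $(\Lambda_0,\dot{\fh}^*)=0$, everything reduces to understanding the pairing of $\dot{\rho}_{\ol{1}}$ with the roots of $\dot{\fl}^{(i)}$.

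The key step, where assumption (ii) enters, is the claim that $(\dot{\rho}_{\ol{1}},\gamma)=0$ for every root $\gamma$ of $\dot{\fl}^{(i)}$ with $i\neq 1$. By~(\ref{ika}) the unique positive-length simple root of $\dot{\fg}_{\ol{0}}$ is $\theta\in\Sigma(\fg^{\#})$, so $\Sigma(\dot{\fl}^{(i)})\subset\Sigma(\dot{\fg}_{\ol{0}})_-$; by (ii) each such $\alpha$ then lies in $\dot{\Sigma}$, i.e.\ is a simple root of $\dot{\fg}$. Since $\alpha$ is even, $s_{\alpha}$ permutes $\dot{\Delta}^+\setminus\{\alpha\}$, hence permutes the set $\dot{\Delta}_{\ol{1}}^+$ of positive odd roots, so $s_{\alpha}\dot{\rho}_{\ol{1}}=\dot{\rho}_{\ol{1}}$. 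As $W(\dot{\fl}^{(i)})$ is generated by these reflections it fixes $\dot{\rho}_{\ol{1}}$, and applying $s_{\gamma}$ gives $(\dot{\rho}_{\ol{1}},\gamma)=0$. (Without (ii) the root $\alpha$ need only be simple in $\dot{\fg}_{\ol{0}}$, and $s_\alpha$ may fail to preserve $\dot{\Delta}_{\ol{1}}^+$; this is exactly the point of the hypothesis.)

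Granting this, part (a) is immediate: for $\alpha\in\Sigma(\dot{\fl})$ we have $(\Lambda_0,\alpha)=0$ and $(\dot{\rho}_{\fl^{(j)}},\alpha)=0$ for $j\neq i$ (roots of distinct components are orthogonal), whence $(\rho-\rho_{\fl},\alpha)=-(\dot{\rho}_{\ol{1}},\alpha)=0\geq 0$. For the affine simple root $\alpha_0^{\fl}=\delta-\theta_{\fl}$ (with $\theta_{\fl}$ the highest root of $\dot{\fl}^{(i)}$), the same orthogonalities together with $(\dot{\rho}_{\ol{1}},\theta_{\fl})=0$ give $(\rho-\rho_{\fl},\theta_{\fl})=0$, while $(\rho-\rho_{\fl},\delta)=h^{\vee}-h^{\vee}_{\fl}$; hence $(\rho-\rho_{\fl},\alpha_0^{\fl})=h^{\vee}-h^{\vee}_{\fl}$, and part (b) becomes the assertion $h^{\vee}>h^{\vee}_{\fl}$.

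The one genuinely arithmetic point—and the main obstacle—is this last inequality. Since $h^{\vee}_{\fl}=\frac{(\theta_{\fl},\theta_{\fl})}{2}\,h^{\vee}_{\dot{\fl}}$ with $h^{\vee}_{\dot{\fl}}>0$ the finite dual Coxeter number of $\dot{\fl}^{(i)}$ and $(\theta_{\fl},\theta_{\fl})<0$ by unitarity, we have $h^{\vee}_{\fl}<0$; I would finish by checking $h^{\vee}>h^{\vee}_{\fl}$ across the finite list~(\ref{list1}), using the explicit normalizations recorded in the introduction and the values of $h^{\vee}$ there (for instance $\dot{\fg}=\fsl(2|m)$ gives $\theta_{\fl}=\delta_1-\delta_m$, $(\theta_{\fl},\theta_{\fl})=-2$, so $h^{\vee}_{\fl}=-m$ and $h^{\vee}-h^{\vee}_{\fl}=(2-m)-(-m)=2>0$; the cases $\mathfrak{spo}(2|m)$, $\mathfrak{psl}(2|2)$, $D(2|1,a)$ with $a>0$, $G(3)$, $F(4)$ are analogous). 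One could instead seek a uniform proof by expressing $h^{\vee}-h^{\vee}_{\fl}$ through the Dynkin index of $\dot{\fg}_{1/2}$ as a $\dot{\fl}^{(i)}$-module, but the finite check over~(\ref{list1}) already suffices.
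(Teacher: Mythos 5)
Your proposal is correct in substance, and it is actually more careful than the paper's own argument at the one point where care is needed. Structurally the two proofs are parallel: both first show that $\rho-\rho_{\fl}$ is orthogonal to all of $\Delta(\dot{\fl})$, and both then reduce the strict inequality for the affine simple root $\delta-\theta_{\fl}$ to $(\rho-\rho_{\fl},\delta)>0$, i.e.\ to $h^{\vee}>h^{\vee}_{\fl}$. The differences are in how each half is finished. For the first half the paper is shorter: assumption (ii) puts $\alpha\in\Sigma$, so $(\rho,\alpha)=\tfrac{1}{2}(\alpha,\alpha)=(\rho_{\fl},\alpha)$ and the difference vanishes at once; your detour through the invariance of $\dot{\rho}_{\ol{1}}$ under the Weyl group of $\dot{\fl}^{(i)}$ proves the same orthogonality and is sound (the worry that $s_{\alpha}$ might not preserve $\dot{\Delta}^+_{\ol{1}}$ cannot materialize, since $\alpha\in\dot{\Sigma}$ even forces $\alpha/2,2\alpha\notin\dot{\Delta}$). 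For the second half the paper argues uniformly that $(\rho,\delta)\geq 0$ while $(\rho_{\fl},\delta)<0$; but $(\rho,\delta)=h^{\vee}$, which in the unitary normalization is negative for $\fsl(2|m)$ with $m\geq 3$, $\mathfrak{spo}(2|m)$ with $m\geq 5$, $G(3)$ and $F(4)$, so that step of the paper is wrong as stated, and your insistence on actually comparing $h^{\vee}$ with $h^{\vee}_{\fl}$ is exactly the right repair. The only genuine shortfall in your write-up is that you carry out the comparison only for $\fsl(2|m)$ and declare the rest analogous; they are, and by your formula $h^{\vee}_{\fl}=\frac{(\theta_{\fl},\theta_{\fl})}{2}h^{\vee}_{\dot{\fl}}$ one gets $h^{\vee}-h^{\vee}_{\fl}=1$ for $\mathfrak{spo}(2|m)$, $=2$ for $\mathfrak{psl}(2|2)$, $=\frac{2}{1+a}$ resp.\ $\frac{2a}{1+a}$ for the two non-$\#$ components of $D(2|1,a)$ with $a>0$, $=\frac{3}{2}$ for $G(3)$ (the $G_2$ roots have square lengths $-\tfrac12$ and $-\tfrac32$), and $=\frac{4}{3}$ for $F(4)$ (the $\fso_7$ roots have square lengths $-\tfrac23$ and $-\tfrac43$), all strictly positive; write those five lines out and your proof is complete.
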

\begin{proof}
Take an affine component $\fl\not=\fg^{\#}$.
For  $\alpha\in \Sigma(\dot{\fl})$ we have  $(\alpha,\alpha)<0$, so
$\alpha\in \Sigma$  and thus  $(\rho-\rho_{\fl},\alpha)=0$.
Take $\alpha\in \Sigma({\fl})\setminus\Sigma(\dot{\fl})$.
Then $\alpha=\delta-\theta'$ where $\theta'\in \Delta(\dot{\fl})$.
By above, $(\rho-\rho_{\fl},\theta')=0$, so 
$(\rho-\rho_{\fl},\delta-\theta')=(\rho-\rho_{\fl},\delta)$.
One has $(\rho,\delta)\geq 0$ and $(\rho_{\fl},\delta)<0$ since $\delta$ is a positive linear combination of roots in  $\Sigma({\fl})$. Therefore $(\rho-\rho_{\fl},\delta-\theta')>0$.   Hence~(\ref{assmstar}) holds.
\end{proof}

\subsubsection{}
Note that $\{\alpha\in\dot{\Delta}_0: (\alpha,\alpha)<0\}$ is the root subsystem in $\dot{\Delta}_0$;
let  $\dot{\rho}_-$ be  the Weyl vector for this root subsystem. Recall that the choice of $\rho$ is not unique,
however the values $(\rho,\eta)$ and $(\dot{\rho},\eta)$ do not depend on this choice if $\eta\in\mathbb{Z}\Delta$.

Recall that the Lie superalgebra $\dot{\fg}$ is called of type I (resp.,  II) if
its even part $\dot{\fg}_{\ol{0}}$ is
not semisimple (resp., is semisimple); recall that for type I case the
subalgebra $\dot{\fg}_{\ol{0}}$ is  reductive with $1$-dimensional center.
\subsubsection{}
\begin{lem}{mutheta}
If $\dot{\fg}$ is of type II and $\dot{\fg}$ satisfies~(\ref{ika}), then  for any  $\mu\in\mathbb{Z}\Delta$
we have  $2 (\mu,\mu)\leq (\mu,\theta)^2$.
\end{lem}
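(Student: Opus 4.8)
The plan is to reduce the claim to the finite root lattice and then to a signature statement for the bilinear form on $\dot{\fh}^*$. First I would eliminate the affine direction: writing $\mu=j\delta+\dot{\mu}$ with $j\in\mathbb{Z}$ and $\dot{\mu}\in\mathbb{Z}\dot{\Delta}$, and using that $\delta$ is isotropic and orthogonal to every finite root (so $(\delta,\delta)=0$, $(\delta,\dot{\mu})=0$ and $(\delta,\theta)=0$), one gets $(\mu,\mu)=(\dot{\mu},\dot{\mu})$ and $(\mu,\theta)=(\dot{\mu},\theta)$. Hence it suffices to prove $2(\dot{\mu},\dot{\mu})\leq(\dot{\mu},\theta)^2$ for $\dot{\mu}\in\mathbb{Z}\dot{\Delta}\subset\dot{\fh}^*$.

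Next I would analyze the form on the real span of the roots. Since $\dot{\fg}$ is of type II, $\dot{\fg}_{\ol{0}}$ is semisimple and its roots already span $\dot{\fh}^*$; therefore $\mathbb{Z}\dot{\Delta}\subset\spn_{\mathbb{R}}\dot{\Delta}_{\ol{0}}$ and it is enough to control the form there. Using the decomposition $\dot{\fg}_{\ol{0}}=\dot{\fg}^{\#}\times\dot{\fl}^{(2)}\times\cdots$ of~\ref{fl}, the root lattices of distinct factors are mutually orthogonal, and within each simple factor the invariant form is definite (a nonzero multiple of the Killing form). Hypothesis~\eqref{ika} says that $\theta$ is the only simple root of $\dot{\fg}_{\ol{0}}$ of positive square length, so the positive factor $\dot{\fg}^{\#}$ has a single simple root and equals $\fsl_2$ with $(\theta,\theta)=2>0$, while every other factor is negative-definite. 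Consequently the form on $\spn_{\mathbb{R}}\dot{\Delta}_{\ol{0}}$ has signature $(1,\,\dim\dot{\fh}-1)$ with the unique positive direction spanned by $\theta$, so $\theta^{\perp}$ is negative-definite.

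Finally I would combine these. Writing the orthogonal decomposition $\dot{\mu}=a\theta+\nu$ with $a=(\dot{\mu},\theta)/2$ and $\nu\in\theta^{\perp}$, a direct computation using $(\theta,\theta)=2$ and $(\nu,\theta)=0$ gives $(\dot{\mu},\theta)^2-2(\dot{\mu},\dot{\mu})=-2(\nu,\nu)$. Since $\theta^{\perp}$ is negative-definite, $(\nu,\nu)\leq 0$, which yields the desired inequality (indeed strictly, unless $\nu=0$). The only real content—and the step needing care—is the signature claim: I must confirm that type II guarantees the even roots span $\dot{\fh}^*$, so that no contribution of $\dot{\mu}$ escapes the negative-definite complement, and that~\eqref{ika} together with the definiteness of each simple factor pins down $\dot{\fg}^{\#}$ as exactly $\fsl_2$; the reduction to the finite part and the final computation are routine.
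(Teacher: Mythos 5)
Your proof is correct and follows essentially the same route as the paper: the paper writes $\mu=a\delta+b\theta+\mu_-$ with $\mu_-\in\mathbb{Q}\Sigma(\dot\fg_{\ol{0}})_-$, notes $(\mu_-,\theta)=0$ and $\|\mu_-\|^2\leq 0$, and computes exactly as you do, your $\theta^{\perp}$ being the same negative-definite subspace since distinct simple factors of $\dot\fg_{\ol{0}}$ are orthogonal. The only difference is that you spell out the justifications (definiteness of the form on each simple factor, $(\ref{ika})$ forcing $\dot\fg^{\#}=\fsl_2$, odd roots lying in the rational span of even roots in type II) which the paper asserts without comment.
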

\begin{proof}
Recall that  $\Sigma(\dot\fg_{\ol{0}})=\{\theta\}\coprod \Sigma(\dot\fg_{\ol{0}})_-$.
Since $\dot{\fg}$ is of type II, one has  $\mathbb{Z}\Delta=\mathbb{Z}\Sigma({\fg}_{\ol{0}})$, so
 $\mu=a\delta+b\theta+\mu_-$ for 
$a,b\in\mathbb{Q}$ and 
$\mu_-\in \mathbb{Q}  \Sigma(\dot\fg_{\ol{0}})_-$.  One has $||\mu_-||^2\leq 0$ and $(\mu_-,\theta)=0$. Therefore
$2||\mu_A||^2\leq  4b^2=(\mu_A,\theta)^2$.
\end{proof}

\subsubsection{}
\begin{lem}{lemrho-}
If $\Sigma$ satisfies~(\ref{ika}) and  (i), (ii) in Section~\ref{Step4},  
then we can choose the Weyl vector $\rho:=h^{\vee}\Lambda_0+\dot{\rho}_-+\frac{h^{\vee}-1}{2}\theta$.
\end{lem}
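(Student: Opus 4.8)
The plan is to start from the standard form $\rho=h^{\vee}\Lambda_0+\dot{\rho}$ of a Weyl vector of $\fg$, where $\dot{\rho}=\dot{\rho}_{\ol{0}}-\dot{\rho}_{\ol{1}}$ is the Weyl vector of $\dot{\fg}$, written as the difference of the half-sums of positive even and positive odd roots. Since $\rho$ is only determined up to a multiple of $\delta$, which is orthogonal to all roots, and since both the proposed vector and $h^{\vee}\Lambda_0+\dot{\rho}$ lie in $\mathbb{C}\Lambda_0\oplus\dot{\fh}^*$, it is enough to prove the equality of the finite parts $\dot{\rho}=\dot{\rho}_-+\frac{h^{\vee}-1}{2}\theta$. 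I would obtain this by evaluating $\dot{\rho}_{\ol{0}}$ and $\dot{\rho}_{\ol{1}}$ separately, and then verify the defining relations $(\rho,\alpha)=\frac12(\alpha,\alpha)$ for $\alpha\in\Sigma$ as a sanity check.

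For the even half-sum I would use that the normalization is of unitary type: by~(\ref{ika}) one has $\Sigma(\dot{\fg}_{\ol{0}})_+=\{\theta\}$, so in the product decomposition $\dot{\fg}_{\ol{0}}=\dot{\fg}^{\#}\times\prod_{i>1}\dot{\fl}^{(i)}$ the factor $\dot{\fg}^{\#}$ is an $\fsl_2$ with single positive root $\theta$, orthogonal to the remaining factors, whose positive roots are exactly the even roots of negative square length. Consequently $\dot{\rho}_{\ol{0}}=\tfrac12\theta+\dot{\rho}_-$; in particular $(\dot{\rho}_-,\theta)=0$.

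The odd half-sum is the crux. By the consistency of the grading~(\ref{5.1}) with parity, all odd roots lie in $\dot{\fg}_{\pm1/2}$ and the positive ones are precisely the roots of $\dot{\fg}_{1/2}$, so $(\beta,\theta)=1$ for every $\beta\in\dot{\Delta}^+_{\ol{1}}$. The key observation is that the bracket $\dot{\fg}_{1/2}\times\dot{\fg}_{1/2}\to\dot{\fg}_1=\mathbb{C}e$ is a non-degenerate invariant pairing (non-degeneracy follows from $(u,v)\mapsto(f,[u,v])$ together with the isomorphism $\ad f\colon\dot{\fg}_{1/2}\iso\dot{\fg}_{-1/2}$). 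Since $e$ has weight $\theta$, this pairing matches the weight space of weight $\mu$ with that of weight $\theta-\mu$, so the multiset of weights of $\dot{\fg}_{1/2}$ is invariant under $\mu\mapsto\theta-\mu$. Summing over it gives $\sum_{\beta\in\dot{\Delta}^+_{\ol{1}}}\beta=\tfrac12(\dim\dot{\fg}_{1/2})\,\theta$, and hence, using $\dim\dot{\fg}_{1/2}=\tfrac12\dim\dot{\fg}_{\ol{1}}=4-2h^{\vee}$ from~(\ref{strange}), $\dot{\rho}_{\ol{1}}=\tfrac{2-h^{\vee}}{2}\theta$. This proportionality to $\theta$ is exactly what the argument must supply: invariance under the Weyl group of $\dot{\fg}_0$ only forces $\dot{\rho}_{\ol{1}}$ to be orthogonal to the roots of $\dot{\fg}_0$, and for type~I superalgebras this leaves a possible central component; the invariant pairing removes it uniformly in both type~I and type~II, and this is the one step I expect to be the real obstacle.

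Finally I would combine the two computations:
$$\dot{\rho}=\dot{\rho}_{\ol{0}}-\dot{\rho}_{\ol{1}}=\tfrac12\theta+\dot{\rho}_--\tfrac{2-h^{\vee}}{2}\theta=\dot{\rho}_-+\tfrac{h^{\vee}-1}{2}\theta,$$
so $\rho=h^{\vee}\Lambda_0+\dot{\rho}_-+\frac{h^{\vee}-1}{2}\theta$, as asserted. As a check, $(\rho,\delta)=h^{\vee}$ and $(\rho,\theta)=(\dot{\rho}_-,\theta)+\tfrac{h^{\vee}-1}{2}(\theta,\theta)=h^{\vee}-1$ give $(\rho,\alpha_0)=1=\frac12(\alpha_0,\alpha_0)$, while for $\dot{\alpha}\in\dot{\Sigma}$ the value $(\rho,\dot{\alpha})$ equals $(\dot{\rho},\dot{\alpha})=\frac12(\dot{\alpha},\dot{\alpha})$. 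Everything apart from the odd-part computation is bookkeeping with the unitary-type normalization.
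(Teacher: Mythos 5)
Your proof is correct, but it takes a genuinely different route from the paper's. The paper sets $\zeta:=h^{\vee}\Lambda_0+\dot{\rho}_-+\frac{h^{\vee}-1}{2}\theta-\rho$ and proves $(\zeta,\beta)=0$ for all $\beta\in\Delta$: hypotheses (i), (ii) give orthogonality to $\Sigma(\fg_{\ol{0}})$, which already finishes type II because there $\mathbb{Z}\Delta=\mathbb{Z}\Sigma(\fg_{\ol{0}})$, and the one remaining type I case ($\fsl(2|n)$) is settled by an ad hoc computation with the extra lattice generator $\vareps_1-\delta_1$, using the symmetry between $\vareps_1-\delta_1$ and $\vareps_2-\delta_n$. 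You instead compute $\dot{\rho}=\dot{\rho}_{\ol{0}}-\dot{\rho}_{\ol{1}}$ directly: the even half-sum is $\frac12\theta+\dot{\rho}_-$ by the unitary-type normalization, and the odd half-sum is forced to be proportional to $\theta$ by the non-degenerate pairing $\dot{\fg}_{1/2}\times\dot{\fg}_{1/2}\to\dot{\fg}_1=\mathbb{C}e$, with the coefficient pinned down by the dimension formula~(\ref{strange}). What your route buys is uniformity: there is no type I/type II case split and no $\fsl(2|n)$-specific computation, and it makes visible where the coefficient $\frac{h^{\vee}-1}{2}$ comes from. The price is the external input~(\ref{strange}) (quoted from \cite{KW}) and the structure theory of the minimal grading, whereas the paper's argument is pure root-lattice bookkeeping from the stated hypotheses; your argument also never invokes hypothesis (ii), which the paper's does.

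One attribution needs repair. You assert that consistency of the grading~(\ref{5.1}) with parity already implies that the positive odd roots are precisely the weights of $\dot{\fg}_{1/2}$. Consistency only gives that every odd root is a weight of $\dot{\fg}_{1/2}$ or of $\dot{\fg}_{-1/2}$; which member of a pair $\pm\beta$ is positive depends on the choice of $\dot{\Sigma}$, and your claim fails for bases obtained by odd reflections: for $\fsl(2|m)$ with $\dot{\Sigma}=\{\delta_1-\vareps_1,\vareps_1-\delta_2,\ldots\}$ the simple (hence positive) root $\delta_1-\vareps_1$ lies in $\dot{\fg}_{-1/2}$. What you actually need is hypothesis (i): $\alpha_0=\delta-\theta\in\Sigma$ forces $\theta$ to be the highest root of $\dot{\Delta}$ with respect to $\dot{\Sigma}$, and then a positive odd $\beta$ with $(\beta,\theta)=-1$ would produce the positive root $s_{\theta}\beta=\beta+\theta$ strictly exceeding $\theta$, a contradiction. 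With this one-line fix (and reading all sums with root multiplicities, which is the relevant convention for $\mathfrak{psl}(2|2)$, whose odd root spaces are two-dimensional) your argument is complete.
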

\begin{proof}
Set 
$$\zeta:=h^{\vee}\Lambda_0+\dot{\rho}_-+\frac{h^{\vee}-1}{2}\theta-\rho.$$
It is enough to show that $(\zeta,\beta)=0$ for all $\beta\in\Delta$.
Using the assumption  (ii) we obtain 
 $(\zeta,\alpha)=0$ for   $\alpha\in\Sigma(\dot\fg_{\ol{0}})_-$ and 
 and $(\zeta,\delta)=0= (\zeta,\delta-\theta)=0$. 
 The assumption (i) gives
  \begin{equation}\label{rho'rho}
  (\zeta,\alpha)=0 \ \text{ for all } \alpha\in \Sigma({\fg}_{\ol{0}}).
  \end{equation}

If $\dot{\fg}$ is of type II, then  $\mathbb{Z}\Delta=\mathbb{Z}\Sigma({\fg}_{\ol{0}})$, so~(\ref{rho'rho}) gives 
 $(\zeta,\beta)=0$ for all $\beta\in\Delta$.  This establishes the assertion for this case.

Consider the remaining case when $\dot{\fg}$ is of type I.  Recall that for $\dot{\fg}$ is of type I, (\ref{ika})
holds only for $\fsl(2|n)$.  Since $\mathbb{Z}\Delta$ is spanned by $\Sigma({\fg}_{\ol{0}})$ by $\vareps_1-\delta_1$, it is enough to check $(\zeta,\vareps_1-\delta_1)=0$.
In this case  $\vareps-1-\delta_1,\delta_n-\vareps_2\in\Sigma$, 
so $(\rho, \vareps_1-\delta_1+\vareps_2-\delta_n)=0$.
Note that  $(\theta,\vareps_1+\vareps_2)=0$. 
Since   $\dot{\rho}_-$ is
 the standard Weyl vector for $\fsl_n$ we have  $(\dot{\rho}_-,\delta_1+\delta_n)=0$. Therefore
 $$(\zeta, \vareps_1-\delta_1)+(\zeta, \vareps_2-\delta_n)=(\rho, \vareps_1-\delta_1+\vareps_2-\delta_n)=0.$$
Since $\vareps_1-\vareps_2$ and
$\delta_1-\delta_n$ lie in $\mathbb{Z}\Sigma({\fg}_{\ol{0}})$, we have
$(\zeta,\vareps_1-\delta_1)=(\zeta,\vareps_2-\delta_n)$. Thus 
 $(\zeta,\vareps_1-\delta_1)=0$ as required.
\end{proof}

\subsubsection{}
By~\cite{Kbook}, Theorem 4.3 
for a simple finite-dimensional Lie algebra $\ft$ we have
$$
\{\zeta\in\mathbb{Q}\Sigma(\ft)|\ \forall \alpha\in\Sigma(\ft)\ \  (\zeta,\alpha)\in\mathbb{Q}_{\geq 0}(\alpha,\alpha)\}\subset
\mathbb{Q}_{\geq 0}\Sigma(\ft).$$
This implies $\dot{\rho}_-\in \mathbb{Q}_{\geq 0}\Sigma(\dot\fg_{\ol{0}})_-$, so 
$(\eta, \dot{\rho}_-)\leq 0$ for any $\dot\fg_{\ol{0}}$-dominant weight $\eta\in\fh^*$.
Using~\Lem{lemrho-} we obtain 
\begin{equation}\label{lemeta}
(2\dot{\rho},\eta)\leq (h^{\vee}-1)(\eta,\theta) \ \text{ if } \eta\in\mathbb{Z}\Delta\ \text{ is $\dot\fg_{\ol{0}}$-dominant}.
\end{equation}

\subsection{Proof of~(\ref{check!})}\label{aaaa}
In this subsection $\Sigma$ satisfies~(\ref{ika}) and  the assumptions (i), (ii) of Section~\ref{Step4}.

In this case $(\beta,\theta)=\pm 1$ for all $\beta\in\dot{\Delta}_1$.
Let
 $\beta_1,\ldots,\beta_l\in \dot{\Delta}_1$ be the roots satisfying 
 $(\beta_i,\theta)=-1$. By above, $\dot{\Delta}=\{\pm\beta_i\}_{i=1}^l$ so
$$\Delta_{\ol{1}}^+\setminus\dot{\Delta}_1=\{j \delta\pm\beta_i|\ j\geq 1\}_{i=1}^l.
$$

For a fixed $A\in\cA$ we denote by $a_{i,\pm}$ the number of elements of the form
$j\delta\pm \beta_i$ in $A$; we set 
$$a_i:=a_{i,+}-a_{i,-}.$$
We have
$\mu_A=\sum\limits_{\scriptstyle{\alpha\in A}}\alpha= (\mu_A,\Lambda_0)\delta+ \sum\limits_{i=1}^l 
a_i\beta_i$.

\subsubsection{}
\begin{lem}{lemD2mu}
If $\dot{\fg}$ is as in Section~\ref{Step4},  then  $2 ||\mu_A||^2\leq l \sum\limits_{i=1}^l a_i^2$.
\end{lem}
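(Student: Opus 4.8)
The plan is to reduce the inequality to a statement about the finite part $\dot\mu_A:=\sum_{i=1}^l a_i\beta_i$ and then split according to whether $\dot{\fg}$ is of type I or II. First I would record two elementary facts. Since $\mu_A=(\mu_A,\Lambda_0)\delta+\dot\mu_A$ and $\delta$ is isotropic and orthogonal to $\dot{\fh}^*$, one has $||\mu_A||^2=||\dot\mu_A||^2$; and since $(\delta,\theta)=0$ while $(\beta_i,\theta)=-1$ for each $i$, one has $(\mu_A,\theta)=-\sum_{i=1}^l a_i$. Thus the target inequality $2||\mu_A||^2\leq l\sum_i a_i^2$ involves only $\dot\mu_A$, and the quantity $(\sum_i a_i)^2=(\mu_A,\theta)^2$ is the natural intermediary, since Cauchy--Schwarz gives $(\sum_{i=1}^l a_i)^2\leq l\sum_{i=1}^l a_i^2$.

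For the type II case I would invoke \Lem{mutheta} directly: since $\dot{\fg}$ is of type II and satisfies~(\ref{ika}), applying it to $\mu=\mu_A\in\mathbb{Z}\Delta$ yields $2||\mu_A||^2\leq(\mu_A,\theta)^2=(\sum_i a_i)^2$, and combining with the Cauchy--Schwarz bound above gives $2||\mu_A||^2\leq l\sum_i a_i^2$ at once. This disposes of all $\dot{\fg}$ in the list of Section~\ref{Step4} except the type I superalgebras.

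The main obstacle is the type I case, because there the bound $2||\mu_A||^2\leq(\mu_A,\theta)^2$ of \Lem{mutheta} fails (its proof uses $\mathbb{Z}\Delta=\mathbb{Z}\Sigma(\fg_{\ol 0})$, which holds only in type II). By \Lem{lemrho-}, the only type I case compatible with~(\ref{ika}) is $\dot{\fg}=\fsl(2|m)$, so here I would compute directly. With the given $\dot\Sigma$, the roots $\beta_i$ with $(\beta_i,\theta)=-1$ are $\vareps_2-\delta_i$ and $\delta_i-\vareps_1$ ($1\leq i\leq m$), so $l=2m$; writing the corresponding coefficients as $b_i$ and $c_i$ and using $(\vareps_a,\vareps_b)=\delta_{ab}$, $(\delta_i,\delta_j)=-\delta_{ij}$, $(\vareps_a,\delta_i)=0$, one gets
$$
\dot\mu_A=-\Bigl(\sum_i c_i\Bigr)\vareps_1+\Bigl(\sum_i b_i\Bigr)\vareps_2+\sum_i(c_i-b_i)\delta_i,
$$
whence $||\mu_A||^2=(\sum_i c_i)^2+(\sum_i b_i)^2-\sum_i(c_i-b_i)^2$. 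The key point is that the $\delta$-contribution is nonpositive, so $||\mu_A||^2\leq(\sum_i b_i)^2+(\sum_i c_i)^2$, and then Cauchy--Schwarz applied separately to the two groups of $m$ coefficients gives $(\sum_i b_i)^2\leq m\sum_i b_i^2$ and $(\sum_i c_i)^2\leq m\sum_i c_i^2$. Since $l=2m$ and $\sum_i a_i^2=\sum_i b_i^2+\sum_i c_i^2$, this yields exactly $2||\mu_A||^2\leq l\sum_i a_i^2$, completing the argument.
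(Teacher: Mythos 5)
Your proof is correct, and its skeleton coincides with the paper's: both split into type II versus the single type I case $\fsl(2|m)$, and both settle type II by applying \Lem{mutheta} to get $2||\mu_A||^2\leq(\mu_A,\theta)^2=(\sum_{i=1}^l a_i)^2$ and then the elementary bound $(\sum_{i=1}^l a_i)^2\leq l\sum_{i=1}^l a_i^2$ (which you call Cauchy--Schwarz and the paper calls Jensen's inequality). The only divergence is how the $\fsl(2|m)$ computation is organized. The paper works with the Gram matrix of the $\beta_i$: it records that $(\beta_i,\beta_j)\in\{0,1\}$, writes $||\mu_A||^2=2\sum_{1\leq i<j\leq 2m}a_ia_j(\beta_i,\beta_j)$, and verifies the sum-of-squares identity $m\sum_{i=1}^l a_i^2-||\mu_A||^2=\sum_{1\leq i<j\leq m}(a_i-a_j)^2+\sum_{m<i<j\leq 2m}(a_i-a_j)^2+\sum_{i=1}^m(a_i-a_{m+i})^2\geq 0$. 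You instead expand $\dot\mu_A$ in the orthogonal basis $\{\vareps_1,\vareps_2,\delta_1,\ldots,\delta_m\}$, observe that the $\delta$-part contributes nonpositively to $||\mu_A||^2$ because the form is negative definite there, discard it, and apply Cauchy--Schwarz separately to the two groups of $m$ coefficients. The two computations are equivalent in substance and length; yours makes the positivity mechanism (the signature of the form) more transparent, while the paper's identity is self-contained and shows exactly when equality holds. One small point: the fact that $\fsl(2|m)$ is the only type I case compatible with~(\ref{ika}) appears inside the proof of \Lem{lemrho-} rather than in its statement, so your citation is slightly imprecise, though the fact itself is correct and is indeed what the paper uses.
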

\begin{proof}
Consider the case when $\dot{\fg}$ is of type II.  By~\Lem{mutheta}, 
$2||\mu_A||^2\leq (\mu_A,\theta)^2$.
Since $(\beta_i,\theta)=-1$ for all $i$, we have 
$(\mu_A,\theta)=-\sum\limits_{i=1}^l a_i$. Thus
$$2||\mu_A||^2\leq (\sum\limits_{i=1}^l a_i)^2.$$
By Jensen's inequality,  $(\sum\limits_{i=1}^l a_i)^2\leq l \sum\limits_{i=1}^l a_i^2$.
This completes the proof for  $\dot{\fg}$  of type II.

The remaining case is $\dot{\fg}=\fsl(2|m)$. In this case $l=2m$ and we have $\beta_i=\vareps_2-\delta_i$ and
$\beta_{m+i}=\delta_i-\vareps_1$ for $i=1,\ldots,m$. Then
$(\beta_i,\beta_j)=1$ if $1\leq i,j\leq m$ or
$m<i,j\leq2m$ of $j-i=\pm m$; in all other cases $(\beta_i,\beta_j)=0$. 
One has
$$||\mu_A||^2=2\sum\limits_{1\leq i<j\leq 2m} a_i a_j (\beta_i,\beta_j)$$
that is
$$m\sum\limits_{i=1}^l a_i^2-||\mu_A||^2=
\sum\limits_{1\leq i<j\leq m} (a_i-a_j)^2+\sum\limits_{m<i<j\leq 2m} (a_i-a_j)^2+\sum\limits_{i=1}^m (a_i-a_{m+i})^2\geq 0.$$
Since $l=2m$ we get $l\sum\limits_{i=1}^l a_i^2\geq 2 ||\mu_A||^2$ as required.
 \end{proof}

\subsubsection{}
\begin{lem}{lemDDD}
We have $\ 2(\mu_A,\Lambda_0)\geq \sum\limits_{i=1}^l (a_i^2+|a_i|)$.
\end{lem}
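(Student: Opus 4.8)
The plan is to compute $(\mu_A,\Lambda_0)$ directly as a sum of $\delta$-levels and then bound it below using the fact that $A$ is a \emph{set}. Since $(\Lambda_0,\delta)=1$ while $(\Lambda_0,\beta_i)=0$ for each finite root $\beta_i$, every element $\alpha=j\delta\pm\beta_i$ of $\Delta_{\ol{1}}^+\setminus\dot{\Delta}_1$ satisfies $(\alpha,\Lambda_0)=j$. Hence $(\mu_A,\Lambda_0)=\sum_{\alpha\in A}(\alpha,\Lambda_0)$ is exactly the sum, over all $\alpha\in A$, of the positive integer $j$ occurring in $\alpha=j\delta\pm\beta_i$.

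First I would group the elements of $A$ by the index $i\in\{1,\ldots,l\}$ and by the sign $\pm$. For a fixed $i$ and fixed sign the elements of $A$ of the form $j\delta+\beta_i$ (resp.\ $j\delta-\beta_i$) are pairwise distinct roots, so their levels $j$ are \emph{distinct} positive integers, and there are $a_{i,+}$ (resp.\ $a_{i,-}$) of them. Thus the sum of these levels is smallest when they equal $1,2,\ldots,a_{i,+}$ (resp.\ $1,2,\ldots,a_{i,-}$), which yields
$$(\mu_A,\Lambda_0)\ \geq\ \sum_{i=1}^l\Bigl(\binom{a_{i,+}+1}{2}+\binom{a_{i,-}+1}{2}\Bigr)=\frac12\sum_{i=1}^l\bigl(a_{i,+}(a_{i,+}+1)+a_{i,-}(a_{i,-}+1)\bigr).$$

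It then remains to verify the termwise inequality: writing $p:=a_{i,+}\geq 0$ and $q:=a_{i,-}\geq 0$, so that $a_i=p-q$, I must check that $p(p+1)+q(q+1)\geq(p-q)^2+|p-q|=a_i^2+|a_i|$. Expanding and cancelling $p^2+q^2$ reduces this to $p+q+2pq\geq|p-q|$, which holds since $p+q\geq|p-q|$ for nonnegative $p,q$ and $2pq\geq 0$; summing over $i$ then gives the claim. The computation is entirely elementary, and the single point that drives it is the observation that distinctness of the roots in the \emph{set} $A$ forces the $\delta$-levels (for each fixed pair $(i,\pm)$) to be distinct positive integers, producing the triangular-number lower bound $\binom{a_{i,\pm}+1}{2}$ in place of a mere $a_{i,\pm}$; the rest is the clean quadratic reduction above.
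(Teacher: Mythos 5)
Your proof is correct and follows essentially the same route as the paper: both compute $(\mu_A,\Lambda_0)$ as the sum of $\delta$-levels, use distinctness of the roots in $A$ (for each fixed pair $(i,\pm)$) to bound each partial sum below by the triangular number $\tfrac{1}{2}a_{i,\pm}(a_{i,\pm}+1)$, and finish with the same elementary quadratic inequality, which the paper splits as $a_{i,+}^2+a_{i,-}^2\geq a_i^2$ together with $a_{i,+}+a_{i,-}\geq |a_i|$ while you merge it into $p+q+2pq\geq |p-q|$.
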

\begin{proof}
Recall that $A$ contains  the elements
$$j_1\delta+\beta_1, j_2\delta+\beta_1,\ldots, j_{a_{1,+}}\delta+\beta_1\ 
\ \text{ for }\ \ 1\leq j_1<j_2<\ldots <j_{a_{1,+}}.$$
The sum of these elements is  $m_{1,+}\delta+a_{1,+}\beta_1$
where 
$$m_{1,+}=j_1+\ldots+ j_{a_{1,+}}\geq 1+2+\ldots a_{1,+}=\frac{a_{1+}(a_{1+}+1)}{2}.$$ 
Using the similar notation for other elements of $\dot{\Delta}$
we obtain  
$$2(\mu_A,\Lambda_0)=2\sum\limits_{i=1}^l(m_{i,+}+m_{i,-})
\geq \sum\limits_{i=1}^l a_{i,+}(a_{i,+}+1)+a_{i,-}(a_{i,-}+1).$$
Since $a_{i,\pm}\in\mathbb{Z}_{\geq 0}$ we have
$a_{i,+}^2+a_{i,-}^2\geq (a_{i,+}-a_{i,-})^2=a_i^2$.
Hence 
$$2(\mu_A,\Lambda_0)\geq \sum\limits_{i=1}^l (a_i^2+|a_i|)$$
as required.
\end{proof}

\subsubsection{}
\begin{cor}{cor7}
Formula~(\ref{check!}) holds if   $k\in\mathbb{Z}_{\geq 0}$ is such that 
$k+2h^{\vee}\geq \frac{5}{2}$.
\end{cor}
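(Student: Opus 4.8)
The plan is to reduce both inequalities in~(\ref{check!}) to elementary estimates in the integers $a_i$, combining Lemmas~\ref{lemD2mu} and~\ref{lemDDD} with the numerical identity $l=4-2h^\vee$ (equivalently $2-h^\vee=l/2$), which follows from~(\ref{strange}) because the odd roots are the $\pm\beta_i$, so $\dim\dot\fg_{\ol{1}}=2l$. From~(\ref{strange}) I also record $h^\vee\le 1$, whence $1-h^\vee\ge 0$ and $k+h^\vee\ge\frac52-h^\vee\ge\frac32>0$ once $k+2h^\vee\ge\frac52$ is assumed.

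First I would fix the notation $S_1:=\sum_i|a_i|$, $S_2:=\sum_i a_i^2$, $T:=\sum_i a_i$, so that $\mu_A=(\mu_A,\Lambda_0)\delta+\sum_i a_i\beta_i$ gives $(\mu_A,\theta)=-T$ and $u_A=2(k+h^\vee)(\mu_A,\Lambda_0)+(1-h^\vee)T-||\mu_A||^2$. Four facts are then available: Lemma~\ref{lemDDD} gives $(\mu_A,\Lambda_0)\ge\frac12(S_2+S_1)$; Lemma~\ref{lemD2mu} gives $||\mu_A||^2\le\frac{l}{2}S_2=(2-h^\vee)S_2$; since $-\mu_A$ is $\dot\fg_{\ol{0}}$-dominant and $\theta$ is a simple root with $(\theta,\theta)=2>0$, one has $T=(-\mu_A,\theta)\ge 0$; and, crucially, $a^2\ge a$ for every integer $a$ yields the integrality estimate $S_2\ge T$ (as well as $S_1\ge T$). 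I expect this last, almost trivial, integrality step to be the real pivot of the argument.

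For the first inequality $0<u_A$, I would discard the nonnegative term $(1-h^\vee)T$ and substitute the bounds, obtaining $u_A\ge(k+2h^\vee-2)S_2+(k+h^\vee)S_1$; both coefficients are positive ($k+2h^\vee-2\ge\frac12$ and $k+h^\vee\ge\frac32$), so $u_A>0$ unless all $a_i$ vanish. In that degenerate case $\mu_A\in\mathbb{Z}_{>0}\delta$ (as $A\ne\emptyset$ for $A\in\cA_+$), so $||\mu_A||^2=0$, $T=0$, and $u_A=2(k+h^\vee)(\mu_A,\Lambda_0)>0$.

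For the second inequality I would rewrite the target as $2(2-h^\vee)(-\mu_A,\theta)=lT$ and estimate $u_A-lT$. Using $1-h^\vee-l=h^\vee-3$ together with $(\mu_A,\Lambda_0)\ge\frac12(S_2+T)$ and $||\mu_A||^2\le(2-h^\vee)S_2$ gives $u_A-lT\ge(k+2h^\vee-2)S_2+(k+2h^\vee-3)T$. The main obstacle lives here: the $T$-coefficient $k+2h^\vee-3$ may be negative (it is $\ge-\frac12$), so the crude bound $T\le S_1$ is too lossy; instead the integrality estimate $S_2\ge T$ and the positivity of $k+2h^\vee-2$ give $u_A-lT\ge\bigl(2(k+2h^\vee)-5\bigr)T\ge 0$, where the final step is precisely the hypothesis $k+2h^\vee\ge\frac52$. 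This threshold is sharp: at $k+2h^\vee=2$ the coefficient $2(k+2h^\vee)-5$ turns strictly negative, matching the failure exhibited in the Remark. Both parts of~(\ref{check!}) follow.
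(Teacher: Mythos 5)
Your proof is correct and takes essentially the same route as the paper's: both combine Lemmas~\ref{lemD2mu} and~\ref{lemDDD}, the identity $l=4-2h^{\vee}$ from~(\ref{strange}) together with $h^{\vee}\leq 1$, the dominance bound $(-\mu_A,\theta)\geq 0$, and an integrality estimate among $\sum_i a_i^2$, $\sum_i |a_i|$, $\sum_i a_i$ to arrive at the same threshold $2(k+2h^{\vee})\geq 5$. The only difference is bookkeeping: the paper keeps the term $(k+h^{\vee})D_+$ and invokes $D_2\geq D_+\geq D$, while you weaken Lemma~\ref{lemDDD} to $(\mu_A,\Lambda_0)\geq\frac{1}{2}(S_2+T)$ and use $S_2\geq T$ directly.
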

\begin{proof}
Recall that~(\ref{check!}) asserts that for any $A\in\cA_+$ with $\mu_A\not\in\mathbb{Z}_{\geq 0}\delta$ the number
$$u_A:=2(k+h^{\vee})(\Lambda_0,\mu_A)+(1-h^{\vee})(-\mu_A,\theta)
-||\mu_A||^2$$
satisfies the inequalities $0<u_A$ and $ 2(2-h^{\vee}) (-\mu_A,\theta)\leq u_A$.

Retain notation of Section~\ref{aaaa} and recall that $\mu_A=(\mu_A,\Lambda_0)\delta+ \sum\limits_{i=1}^l 
a_i\beta_i$. Set
$$m_A:= (\mu_A,\Lambda_0),\ \ \ 
D:=\sum\limits_{i=1}^l a_i, \ \ D_+:=\sum\limits_{i=1}^l a_i,\ \ 
D_2:=\sum\limits_{i=1}^l a_i^2.$$
 Since $(\beta_i,\theta)=-1$ for all $i$, we have
  $(\mu_A,\theta)=-D$. Therefore 
$$u_A= 2(k+h^{\vee})m_A+(1-h^{\vee})D -||\mu_A||^2.$$

For $A\in\cA_+$ we have $(-\mu_A,\theta)\geq 0$, so 
$D\geq 0$.
Observe that  $\mu_A\in\mathbb{Z}_{\geq 0}\delta$ if  $a_i=0$ for all $i$; in particular,
$\mu_A\not\in\mathbb{Z}_{\geq 0}\delta$ implies 
$D_+\geq 1$. Therefore for~(\ref{check!}) it is enough to verify
 \begin{equation}\label{assmtool1}
 \begin{array}{lcll}
 D\geq 0,\ D_+>0\ & \Longrightarrow\ &  0<u_A,\ & (4-2h^{\vee}) D\leq u_A,\end{array}\end{equation} 
 
Indeed, assume that $D\geq 0$ and $D_+>0$.
By Lemmas~\ref{lemD2mu}, \ref{lemDDD} we have
$2m_A\geq D_2+D_+$ and $ l D_2\geq 2||\mu_A||^2$, so
$$u_A\geq (k+h^{\vee})(D_2+D_+)-\frac{lD_2}{2}+(1-h^{\vee})D.$$
By~(\ref{strange}) we have  $l+2h^{\vee}=4$ and $h^{\vee}\leq 1$. For $D\geq 0$ we get 
$$u_A\geq  (k+2h^{\vee}-2)D_2+(k+h^{\vee})D_++ (1-h^{\vee})D.$$

 By above,  $u_A>0$ if   $k+2h^{\vee}\geq 2$ and  $k+h^{\vee}>0$.
It remains to verify $ (4-2h^{\vee}) D\leq u_A$ that is
$$(4-2h^{\vee}) D\leq (k+2h^{\vee}-2)D_2+(k+h^{\vee})D_++ (1-h^{\vee})D$$
that is $$(3-h^{\vee}) D\leq (k+2h^{\vee}-2)D_2+(k+h^{\vee})D_+.$$

Since $D_2\geq D_+\geq D$, the above inequlaity holds if
$3-h^{\vee}\geq k+2h^{\vee}-2+ k+h^{\vee}$. Thus~(\ref{assmtool1}) holds for $k+2h^{\vee}\geq \frac{5}{2}$.
\end{proof}

\section{Proof of~\Thm{thm3} (vi), (vii)}\label{sect4}

In this section we show that $\fg$-module $V^k$ has length greater than two in the following  cases:
$k=-h^{\vee}$;   for all $\dot{\fg}=\fosp(4|2n)$ if $k\geq n$; and for
$\dot{\fg}=D(2|1,a)$ with  $a\in\mathbb{R}_{<-1}$ if $k\in(\mathbb{Z}_{\geq 0}\cap a^{-1}\mathbb{Z})$.

For $\dot{\fg}=D(2|1,a)$ we have   the following square lengths of positive even roots:
$||2\vareps_1||^2=2$, $||2\vareps_2||^2=-\frac{2}{a+1}$, $||2\vareps_3||^2=-\frac{2a}{a+1}$,
and  $\theta=2\vareps_1$.   
If  $||2\vareps_i||^2\not\in\mathbb{R}_{>0}$ for $i=2,3$, then  $\dot{\Delta}^{\#}=\{\pm\theta\}$, so
$\fs$ is of  unitary type . If $\fs$ is of non-unitary type, then $\dot{\Delta}^{\#}=\{\pm\theta, \pm\theta'\}$,
and without loss of generality, we may assume that
$\theta'= 2\vareps_2$, that is $a<-1$.

\subsection{Case $k=-h^{\vee}$}\label{critical}
Let us show that $V^{-h^{\vee}}$ is of infinite length. 

Consider the Sugawara operator $T_{-p}$ (see~\cite{Kbook}, (12.8.4)). Recall that $T_{-p}$ 
gives  an $[\fg,\fg]$-endomorphism of $V^{-h^{\vee}}$.
Set $\fg_{--}:=\sum\limits_{i=1}^{\infty} \dot{\fg} t^{-i}$.
Observe that  $V^k$ is a free $\cU(\fg_{--})$-module generated by the vacuum vector $v_0$.
Let $\{u'_i\}_{i=1}^q$ be an orthonormal basis of $\dot{\fg}$ (i.e., $(u'_i,u'_j)=\delta_{ij}$).
Using~\cite{Kbook}, (12.8.2), we obtain
$$T_{-2p-1}v_0=2\bigl(\sum\limits_{j=1}^{p}\sum\limits_{i=1}^q (u_i t^{-(2p+1-j)}) (u_i t^{-j})\bigr)v_0.$$
The PBW Theorem  for $\cU(\fg_{--})$ gives $T_{-2p-1}v_0\not=0$.
Therefore $T_{-2p-1}v_0$  are primitive vectors in $V^{-h^{\vee}}$,
so this module has subquotients $L(-h^{\vee}\Lambda_0-(2p+1)\delta)$ for all $p\in\mathbb{Z}_{\geq 0}$.

\subsection{}
\begin{prop}{propD21a}
The $\fg$-module $V^k$ has length greater than two in the following cases

\begin{enumerate}
\item $\dot{\fg}=\mathfrak{osp}(4|2n)$ if $k\geq n$;
\item $\dot{\fg}=D(2|1,a)$ with 
$a<-1$, if $k\in (\mathbb{Z}_{> 0}\cap a^{-1}\mathbb{Z})$;
\end{enumerate}
\end{prop}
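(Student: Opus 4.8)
The plan is to produce at least three pairwise non-isomorphic composition factors of $V^k$. Two of them are present for every $k\in\mathbb{Z}_{\geq 0}$: the head $L(k\Lambda_0)$, and $L(s_0.k\Lambda_0)$. The latter comes from the vector $f_{\alpha_0}^{k+1}v_0$, which is nonzero by the PBW theorem and is genuinely $\fg$-primitive because $\alpha_0=\delta-\theta\in\Sigma$ and $\langle k\Lambda_0,\alpha_0^{\vee}\rangle=k\in\mathbb{Z}_{\geq 0}$, so that $e_{\beta}f_{\alpha_0}^{k+1}v_0=0$ for every $\beta\in\Sigma$. Thus $V^k$ always has length $\geq 2$, and the whole problem reduces to exhibiting one further factor $L(\lambda)$ with $\lambda\notin\{k\Lambda_0,\,s_0.k\Lambda_0\}$.

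What distinguishes the two families in the statement is the presence of a second positive-length even root $\theta'$: one has $\theta'=\vareps_1-\vareps_2$ (with $||\theta'||^2=2$) for $\dot{\fg}=\fosp(4|2n)$, and $\theta'=2\vareps_2$ (with $||\theta'||^2=-2/(a+1)>0$) for $\dot{\fg}=D(2|1,a)$, $a<-1$. Correspondingly $\fg^{\#}$ carries a second affine $\widehat{\fsl}_2$-component built on $\theta'$, whose affine simple root $\alpha_0':=\delta-\theta'$ lies in $\Sigma(\fg_{\ol{0}})$; the level of $k\Lambda_0$ on this component is $\ell':=\langle k\Lambda_0,(\alpha_0')^{\vee}\rangle$, which equals $k$ for $\fosp(4|2n)$ and $-k(a+1)=-ka-k$ for $D(2|1,a)$. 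First I would observe that the stated hypotheses are exactly what force $\ell'\in\mathbb{Z}_{>0}$: for $D(2|1,a)$ this is the resonance $ak\in\mathbb{Z}$, i.e. $k\in a^{-1}\mathbb{Z}$, together with $a<-1$ and $k>0$. Granting $\ell'\in\mathbb{Z}_{>0}$, set $w:=(e_{\theta'}t^{-1})^{\ell'+1}v_0$, which is nonzero by PBW. A direct check shows that $w$ is annihilated by $\fn^+_0$: one uses that $\theta\pm\theta'$ and $\gamma\pm\theta'$ (for $\gamma$ a root of the remaining affine components) are not roots, so $e_{\theta}$, $e_{\alpha_0}$ and the positive root vectors of the other even components all commute with $e_{\theta'}t^{-1}$. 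Hence $w$ is a $\fg_{\ol{0}}$-primitive vector of weight $s_0'\circ k\Lambda_0=k\Lambda_0-(\ell'+1)\alpha_0'$, so that $[\,V^k:L_{\fg_{\ol{0}}}(s_0'\circ k\Lambda_0)\,]\neq 0$.

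The remaining, and main, task is to promote this $\fg_{\ol{0}}$-factor to a $\fg$-composition factor distinct from the two baseline ones; this is where I expect the real difficulty. I would argue by contradiction: were $V^k$ of length two, then $[V^k]=[L(k\Lambda_0)]+[L(s_0.k\Lambda_0)]$ in the Grothendieck group, whence the same identity holds after restriction to $\fg_{\ol{0}}$, and $L_{\fg_{\ol{0}}}(s_0'\circ k\Lambda_0)$ would have to occur in $L(k\Lambda_0)|_{\fg_{\ol{0}}}$ or in $L(s_0.k\Lambda_0)|_{\fg_{\ol{0}}}$. To rule this out I would combine the $\dot{\fg}_{\ol{0}}$-integrability of these two simple modules with the $\fg$-Casimir constraint $||\lambda+\rho||^2=||k\Lambda_0+\rho||^2$ on the $\fg$-weight actually carrying $s_0'\circ k\Lambda_0$, and an explicit comparison of the norms $||\,\cdot\,+\rho||^2$; it is precisely here that the inequality $k\geq n$ (respectively the value of $ak$) enters, keeping the new factor strictly off the two baseline weights. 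Since integrality alone does not separate the weights — the $\dot{\fh}$-weight $(\ell'+1)\theta'$ of $s_0'\circ k\Lambda_0$ is itself $\dot{\fg}_{\ol{0}}$-dominant — the decisive input is the norm computation, which in practice I would run through the Jantzen filtration and the character formulas, as in the rest of the proof of \Thm{thm3}.

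The reason a genuine $\fg$-factor is hard to pin down is that, in contrast to the two baseline factors, there is no $\fg$-primitive vector of weight $s_0'\circ k\Lambda_0$: the vector $w$ is not annihilated by the odd root vectors (for instance $e_{\vareps_2+\delta_1}$ for $\fosp(4|2n)$, or $e_{\vareps_1-\vareps_2+\vareps_3}$ for $D(2|1,a)$, because $\vareps_2+\delta_1+\theta'$, respectively $\vareps_1-\vareps_2+\vareps_3+\theta'=\vareps_1+\vareps_2+\vareps_3$, is again a root), so the third factor is ``hidden'' and can only be detected through character bookkeeping rather than by displaying a singular vector. This also explains why the two cases must be handled separately. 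For $\fosp(4|2n)$ the two $\widehat{\fsl}_2$'s are isometric and $\ell'=k$, so one is tempted to exchange them by an automorphism of $\fso_4$; but such an exchange cannot preserve a positive system (it would have to fix the highest root), so it does not transport $L(s_0.k\Lambda_0)$ onto a new factor, and the norm argument is unavoidable. For $D(2|1,a)$ there is no symmetry between $\theta$ and $\theta'$ whatsoever, and the entire argument hinges on the arithmetic resonance $ak\in\mathbb{Z}$; showing that this resonance yields a factor surviving all cancellations is the crux.
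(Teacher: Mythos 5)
Your setup is sound and in fact reproduces the paper's starting data: the two baseline factors $L(k\Lambda_0)$, $L(s_0.k\Lambda_0)$, the second positive-length root $\theta'$, the level $\ell'=\frac{2(k\Lambda_0,\alpha_0')}{(\alpha_0',\alpha_0')}$ on the second affine $\hat{\fsl}_2$, and the observation that the hypotheses ($k\geq n$, resp.\ $a<-1$ and $k\in\mathbb{Z}_{>0}\cap a^{-1}\mathbb{Z}$) are exactly what make $\ell'$ a non-negative integer, so that $w=(e_{\theta'}t^{-1})^{\ell'+1}v_0\not=0$ is primitive for $\fg_{\ol{0}}$. But the step you defer --- turning $w$ into a third $\fg$-composition factor --- is the entire content of the Proposition, and the route you sketch for it does not work. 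The $\fg$-Casimir acts by a single scalar on each $\fg$-composition factor, so it imposes no constraint whatsoever on which $\fg_{\ol{0}}$-composition factors occur inside the restriction of one $\fg$-simple module; a ``norm comparison'' therefore cannot rule out $L_{\fg_{\ol{0}}}(s_0'\circ k\Lambda_0)$ occurring in $L(k\Lambda_0)|_{\fg_{\ol{0}}}$ or in $L(s_0.k\Lambda_0)|_{\fg_{\ol{0}}}$. Nor can you fall back on ``the Jantzen filtration and the character formulas, as in the rest of the proof of \Thm{thm3}'': Section~\ref{jantzen} is set up under the assumptions $h^{\vee}\geq 0$ and $\dot{\fg}\not=D(2|1,a)$, and rests on the Kac--Wakimoto-type formula~(\ref{KWformulashort}); these exclude precisely the two families at hand (non-unitary $\fosp(4|2n)$ has $h^{\vee}=2-2n<0$), which is why the paper handles them separately in Section~\ref{sect4} by a different method.

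The idea you are missing is a change of Borel, which makes the whole question finite. Your vector $w$ fails to be $\fg$-primitive only with respect to $\Sigma$; it \emph{is} a genuine $\fg$-singular vector with respect to the base $\Sigma'=\{\alpha_0'\}\cup\dot{\Sigma}'$ containing $\alpha_0'=\delta-\theta'$, because $V^k$ is a quotient of the Verma module $M_{\Sigma'}(k\Lambda_0)$ and $f_{\alpha_0'}^{\ell'+1}v_{k\Lambda_0}$ is singular there ($\alpha_0'$ is an \emph{even simple} root of $\Sigma'$ with $\langle k\Lambda_0,(\alpha_0')^{\vee}\rangle=\ell'\in\mathbb{Z}_{\geq 0}$). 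So your closing claim that ``there is no $\fg$-primitive vector of weight $s_0'\circ k\Lambda_0$'' and that the third factor can only be detected by character bookkeeping is mistaken --- it is true only for the fixed Borel $\Sigma$, and it is exactly what blocks your proof. Once one knows $[V^k:L_{\Sigma'}(\lambda')]\not=0$ for $\lambda'=k\Lambda_0-(\ell'+1)\alpha_0'$, the length-two assumption forces $L_{\Sigma'}(\lambda')\cong L(s_0.k\Lambda_0)$, and this is refuted by pure combinatorics: $\Sigma$ and $\Sigma'$ are connected by an explicit chain of odd reflections (which do not change $\Delta_{\ol{0}}^+$), and under an odd reflection at $\beta$ the highest weight of a fixed simple module changes by $0$ or by $-\beta$. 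For $\fosp(4|2n)$ the chain consists of the $2n$ roots $\vareps_2\pm\delta_i$, so the $\vareps_2$-coefficient of $\lambda'-s_0.k\Lambda_0=2(k+1)\vareps_2$ would have to be at most $2n$, i.e.\ $k\leq n-1$; for $D(2|1,a)$ the chain has two roots and one gets an outright contradiction for every $k>0$. That is the paper's entire proof --- no norms, no Jantzen filtration, no characters.
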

\begin{proof}
In these cases  $\dot{\Delta}^{\#}=\{\pm \theta,\pm\theta'\}$
 is of type $D_2=A_1\coprod A_1$; we set $\alpha_0:=\delta-\theta$, $\alpha_0':=\delta-\theta'$.

For (i) we have $\dot{\fg}=\mathfrak{osp}(4|2n)$ and 
$\theta=\vareps_1-\vareps_2$,
$\theta':=\vareps_1+\vareps_2$. We  take
$$\begin{array}{ll}
\dot{\Sigma}:=\{\vareps_1+\vareps_2, -\vareps_2-\delta_1,
\delta_1-\delta_2,\ldots,\delta_{n-1}-\delta_n,2\delta_n\},\  & \Sigma=\{\alpha_0\}\cup\dot{\Sigma}, \\
\dot{\Sigma}':=\{\vareps_1-\vareps_2, \vareps_2-\delta_1,
\delta_1-\delta_2,\ldots,\delta_{n-1}-\delta_n,2\delta_n\},\  & \Sigma'=\{\alpha'_0\}\cup\dot{\Sigma}', \end{array}$$
and we denote by $L_{\Sigma'}(\lambda)$ the simple $\fg$-module with the highest weight $\lambda$
with respect to $\Sigma'$.

Recall that if a base $\Sigma$ is obtained from a base $\Sigma''$
by an odd reflection with respect to $\beta\in\Sigma''$, 
then $L(\lambda)=L_{\Sigma''}(\lambda'')$,
where $\lambda=\lambda''$ if $(\lambda'',\beta)=0$, and 
$\lambda=\lambda''-\beta$ otherwise. 
The base ${\Sigma}$ is obtained from the base $\Sigma'$ 
by the sequence of odd reflections with respect to the roots
$$\vareps_2-\delta_1,\vareps_2-\delta_2,\ldots, \vareps_2-\delta_n,\vareps_2+\delta_n,\vareps_2+\delta_{n-1},\ldots, \vareps_2+\delta_1.$$
Therefore $L_{\Sigma'}(\lambda')=L(\lambda)$ implies that 
$$\lambda'-\lambda=\sum\limits_{i=1}^n p_i (\vareps_2-\delta_i)+q_i (\vareps_2+\delta_i)\ \ \ \text{ for some } p_i,q_i\in\{0,1\}.$$
Observe that $\lambda'-\lambda$ is a linear combination of
$\vareps_2$ and $\delta_1,\ldots,\delta_n$, with the coefficient of $\vareps_2$ less than or equal to $2n$.
One has $[V^k:L(s_0.k\Lambda_0)]=1$. 
Since $\frac{2(k\Lambda_0,\alpha'_0)}{(\alpha'_0,\alpha'_0)}=k$ 
 we have 
$$[V^k:L_{\Sigma'}(\lambda')]=1,\ \ \text{ where }
\lambda':=k\Lambda_0-(k+1)\alpha_0'.$$
If the $\fg$-module  $V^k$ has length two, then 
$L(s_0.k\Lambda_0)=L_{\Sigma'}(\lambda')$. One has
$$\lambda'-s_0.k\Lambda_0=(k+1)\alpha_0-(k+1)\alpha_0'=2(k+1)\vareps_2$$
and the above observation forces $2(k+1)\leq 2n$ that is $k\leq n-1$.
This establishes (i).

The proof of (ii) is similar. We have $\dot{\fg}:=D(2|1;a)$ and
$\dot{\Delta}^{\#}=\{\pm \theta,\pm\theta'\}$, where
$\theta=2\vareps_1$,
$\theta':=2\vareps_2$.
We take
$$\dot{\Sigma}:=\{2\vareps_2,\vareps_1-\vareps_2-\vareps_3,2\vareps_3\},\ \ 
\dot{\Sigma}':=\{2\vareps_1,-\vareps_1+\vareps_2-\vareps_3,2\vareps_3\}.$$
Then  $\Sigma=\{\alpha_0\}\cup\dot{\Sigma}$ and 
$\Sigma':=\{\alpha'_0\}\cup\dot{\Sigma}$.

The base ${\Sigma}$ is obtained from the base $\Sigma'$ 
by the odd reflections with respect to the roots 
$-\vareps_1+\vareps_2-\vareps_3$ and $-\vareps_1+\vareps_2+\vareps_3$.
Therefore $L_{\Sigma'}(\lambda')=L(\lambda)$ implies that 
\begin{equation}\label{D21apq}
\lambda'-\lambda=p(-\vareps_1+\vareps_2-\vareps_3)+q(-\vareps_1+\vareps_2+\vareps_3)\ \ \ \text{ for some } p,q\in\{0,1\}.
\end{equation}

For $k\in (\mathbb{Z}_{>0}\cap a^{-1}\mathbb{Z})$ we have
 $[V^k:L(s_0.k\Lambda_0)]=1$. Since
$\frac{2(k\Lambda_0,\alpha'_0)}{(\alpha'_0,\alpha'_0)}=-(a+1)k\in\mathbb{Z}_{\geq 0}$ 
 we have 
$$[V^k:L_{\Sigma'}(\lambda')]=1,\ \ \text{ where }
\lambda':=k\Lambda_0-(1-(a+1)k)\alpha_0'.$$
Assume that  the $\fg$-module $V^k$ has length two. Then
$L(s_0.k\Lambda_0)=L_{\Sigma'}(\lambda')$ and~(\ref{D21apq}) gives
$$\lambda'-s_0.k\Lambda_0=-(p+q)\vareps_1+(p+q)\vareps_2+(q-p)\vareps_3 
\ \ \ \text{ for some } p,q\in\{0,1\}.$$
Since
$$\lambda'-s_0.k\Lambda_0=(k+1)\alpha_0-(1-(a+1)k)\alpha_0'=(2+a)k\delta-
2(k+1)\vareps_1+2(1-(a+1)k)\vareps_2$$
this is impossible for $k>0$. This establishes (ii).
\end{proof}

\section{Proof of~\Thm{thm3}  (i)--(v)}\label{jantzen}
In this section we may assume that 
$h^{\vee}\geq 0$ and  $k\in\mathbb{Z}_{\geq 0}$ is such that
$k+h^{\vee}\not=0$. We also assume that $\dot{\fg}\not=D(2|1,a)$, since this case was taken care of in 
Section~\ref{sect4}.
Our goal is to get a necessary condition on $k$ for the $\fg$-module
 $V^k$ being of  length two.

Our main tool is the Jantzen filtration. Recall that 
the Jantzen filtration of $V^k$ is a decreasing filtration
$\{\cJ^i(V^k)\}_{i=0}^{\infty}$   with $\cJ^0(V^k)=V^k$ and $\cJ^1(V^k)$ being the maximal proper
submodule of $V^k$.  
The Jantzen sum formula expresses $\sum\limits_{i=1}^{\infty} \ch \cJ^i(V^k)$ in terms of the exponents
of the vacuum Shapovalov determinants, computed in~\cite{GKvac}.  Hence $\cJ^2(V^k)=0$ is equivalent to the fact that 
this sum equals  to $\ch  \cJ^1(V^k)$. Note that 
 $\ch  \cJ^1(V^k)=\ch V^k-\ch V_k$
where $V_k$ is the simple quotient of $V^k$.
For $k\in\mathbb{Z}_{\geq 0}$,  $\ch V_k$ is given by the Weyl-Kac character formula
if $\fg$ is a Lie algebra and by
 the Kac-Wakimoto  formula in other cases. Comparing the  Jantzen sum formula
 and $\ch V^k-\ch V_k$ we obtain a criterion when 
the Jantzen filtration has length two (i.e. $\cJ^2(V^k)=0$). 

It is not hard to see that if $V^k$ has length two, then $\cJ^2(V^k)=0$ (see~\Cor{cors=11}).
In this way, we  get a necessary condition for $V^k$ being of  length two.

\subsection{Notation}
We set $Q^{\#}:=\mathbb{Z}\Delta^{\#}$.
 We denote by $\dot{W}$ ($\subset W$) the Weyl group
of  $\dot{\Delta}_0$. For each subset $A\subset \Delta_{\ol{0}}$ we denote  by $W(A)$ the subgroup generated
by the reflections with respect to the roots in $A$. For any subset $B\subset W$ we define the linear operator
$$\cF_{B}:=\sum\limits_{w\in B} \sgn(w) w.$$

We set
$$\supp\bigl(\sum\limits_{\nu} a_{\nu} e^{\nu}\bigr):=\{\nu|\ a_{\nu}\not=0\}.
$$

We fix the triangular decomposition in such a way that $\dot{\Sigma}$ contains a maximal isotropic set $S$
($S\subset \dot{\Delta}$ is isotropic if and only if   $(S,S)=0$).

We intoduce $\dot{R},\dot{K}(\nu)$ by the formulae
$$\dot{R}:=\prod\limits_{\alpha\in\dot{\Delta}_{\ol{0}}^+} (1-e^{-\alpha})^{\dim\fg_{\alpha}}
\prod\limits_{\alpha\in\dot{\Delta}_{\ol{1}}^+} (1+e^{-\alpha})^{-\dim\fg_{\alpha}}
=:\sum\limits_{\mu\in \dot{Q}^+} \dot{K}(\mu)e^{-\mu}.$$
and set
$$
R:=\prod\limits_{\alpha\in\Delta_{\ol{0}}^+} (1-e^{-\alpha})^{\dim\fg_{\alpha}}
\prod\limits_{\alpha\in\Delta_{\ol{1}}^+} (1+e^{-\alpha})^{-\dim\fg_{\alpha}}.$$

For each $k\in\mathbb{C}$ we set
$$U(k):=\{ \eta\in Q^+|\ ||k\Lambda_0+\rho-\eta||^2=||k\Lambda_0+\rho||^2\}.$$

\subsection{Denominator Identity}\label{sectdenomfin}
For $\dot{\fg}\not=\osp(4|2n)$,  $\mathfrak{spo}(2n|4)$, $D(2|1,a)$,
$[\dot{\fg}_{\ol{0}},\dot{\fg}_{\ol{0}}]$ is a product of two Lie algebras,
where the first one is simple 
and the second one is either simple or zero. Hence the
 finite Weyl group $\dot{W}$ can be decomposed as $\dot{W}=\dot{W}'\times \dot{W}''$ where $\dot{W}'$, $\dot{W}''$ are the Weyl groups
of the simple components of $\dot{\fg}_{\ol{0}}$ and $|\dot{W}'|\geq |\dot{W}''|$ (if
$[\dot{\fg}_{\ol{0}},\dot{\fg}_{\ol{0}}]$ is simple, then $\dot{W}''=\{\Id\}$).
For $\dot{\fg}=\osp(4|2n)$,  $\mathfrak{spo}(2n|4)$ with $n\geq 2$, we have   $\dot{W}=\dot{W}'\times \dot{W}''$,
where $\dot{W}'$ is the Weyl group of $\mathfrak{sp}_{2n}$ and  $\dot{W}''\cong \mathbb{Z}_2\times \mathbb{Z}_2$
 is the Weyl group of $\mathfrak{o}_{4}$.

 The denominator identity for $\dot{\fg}$, established in~\cite{KWnum}, \cite{Gfin} , is 
$$
\dot{R}e^{\dot{\rho}}=j_0^{-1}\cF_{ \dot{W} }\bigl(
\frac{e^{\dot{\rho}}}{\prod\limits_{\beta\in S}(1+e^{-\beta})}\bigr)=\cF_{ \dot{W}'}\bigl(
\frac{e^{\dot{\rho}}}{\prod\limits_{\beta\in S}(1+e^{-\beta})}\bigr)$$
where $j_0:=|\dot{W}''|$, and $S\subset \dot{\Delta}^+$ is a maximal isotropic subset.
Recall that in this section $h^{\vee}\geq 0$.
If $h^{\vee}>0$, then  $\dot{W}'=\dot{W}^{\#}$. If 
$h^{\vee}=0$ and $\dot{\fg}=\fgl(n|n)$, $\mathfrak{psl}(n|n)$, then $\dot{W}'\cong \dot{W}''$ and we 
 can (and will) choose $\dot{W}':=\dot{W}^{\#}$. Thus, for
 $\dot{\fg}\not=\mathfrak{spo}(2n+2|2n)$, $D(2|1,a)$, we have $\dot{W}'=\dot{W}^{\#}$.
 This gives the following form of
the  denominator identity: 
$$\dot{R}e^{\dot{\rho}}=\cF_{ \dot{W}^{\#} }\bigl(
\frac{e^{\dot{\rho}}}{\prod\limits_{\beta\in S}(1+e^{-\beta})}\bigr).$$

 \subsubsection{}\label{expansionrule}
For $w\in W$ and $\mu=\sum\limits_{\beta\in S} m_{\beta}\beta$ with $m_{\beta}\geq 0$
we set
\begin{equation}\label{wmudef}\begin{array}{l}
|w\mu|=\sum\limits_{\beta\in S: w\beta\in\Delta^+} m_{\beta} w\beta-
\sum\limits_{\beta\in S: w\beta\not\in\Delta^+} (m_{\beta}+1) w\beta.
\end{array}
\end{equation}
(Note that our notation differs from the notation in~\cite{HRvac}).  One has
\begin{equation}\label{wmuprop}\begin{array}{lcl}
(i) & & |w\mu|\in  Q^+\cap (w\mathbb{Z}S),\\
(ii) & & |w\mu|=|w\mu'|\ \ \Longrightarrow\ \ \mu=\mu',\\
(iii) & & |w\mu|\in Q^{\#}\ \Longrightarrow |w\mu|=0 \ \Longrightarrow\  \mu=0, \ wS\subset\Delta^+.
\end{array}\end{equation}
The  formulas (i), (ii) 
and the last implication of (iii) immediately follow from the definition; for the first implication of (iii)
note that $Q^{\#}\cap (w\mathbb{Z}S)=w(Q^{\#}\cap \mathbb{Z}S)=0$, so 
$|w\mu|\in Q^{\#}$
implies $|w\mu|=0$ as required.

Let  $\htt\mu:=\sum\limits_{\beta\in S} m_{\beta}$. We have
$$w(\prod\limits_{\beta\in S}(1+e^{-\beta})^{-1})=\sum\limits_{\mu\in\mathbb{Z}_{\geq 0}S}
(-1)^{\htt \mu}e^{-|w\mu|}=\sum\limits_{\mu\in\mathbb{Z}_{\geq 0}S}(-1)^{p(\mu)}e^{-|w\mu|}.$$

\subsubsection{}
Now we can rewrite the denominator identity 
as
\begin{equation}\label{denomexp}
\sum\limits_{\nu\in \dot{Q}} \dot{K}(\nu)e^{\dot{\rho}-\nu}=  \dot{R}e^{\dot{\rho}}=\sum\limits_{w\in\dot{W}'}\sum\limits_{\mu\in\mathbb{Z}_{\geq 0}S} (-1)^{p(\mu)}\sgn(w) e^{w\dot{\rho}-|w\mu|}.
\end{equation}

\subsubsection{}
Take any $k'\in\mathbb{C}$.
Since $\rho+k'\Lambda_0-\dot{\rho}=(k'+h^{\vee})\Lambda_0$ is $\dot{W}$-invariant, the denominator identity gives
\begin{equation}\label{denid2}
\dot{R}e^{k'\Lambda_0+\rho}=j_0^{-1}\cF_{ \dot{W} }\bigl(
\frac{e^{k'\Lambda_0+\rho}}{\prod\limits_{\beta\in S}(1+e^{-\beta})}\bigr)
=\cF_{ \dot{W}' }\bigl(
\frac{e^{k'\Lambda_0+\rho}}{\prod\limits_{\beta\in S}(1+e^{-\beta})}\bigr)
\ \text{ for any } \ 
k'\in\mathbb{C}.\end{equation}
Take $\nu\in -\supp\dot{R}$.  By~(\ref{denomexp})  we have $\dot{\rho}-\nu=w\dot{\rho}-|w\mu'|$
for some $w\in\dot{W}'$  and $\mu'\in\mathbb{Z}_{\geq 0} S$; by~(\ref{wmuprop}) (i) we have $|w\mu'|=w\mu$
for some $\mu\in \mathbb{Z}S$, so
$\dot{\rho}-\nu=w(\dot{\rho}+\mu)$. Therefore
$$k'\Lambda_0+\rho-\nu=(k'+h^{\vee})\Lambda_0+w(\dot{\rho}+\mu)=w(k'\Lambda_0+\rho+\mu).$$
Clearly, $ \supp\dot{R}\subset \dot{Q}^+$. We obatin
\begin{equation}\label{dotk} 
-\supp \dot{R}\subset \{\alpha\in \dot{Q}^+|\  ||k'\Lambda_0+\rho-\alpha||^2=||k'\Lambda_0+\rho||^2\}.
\end{equation}

Alternatively, (\ref{dotk}) can be obtained by using the Casimir operator on the Verma module $M(k'\Lambda_0)$.

\subsubsection{}
\begin{lem}{}
If $\dot{\fg}\not=\mathfrak{spo}(2n|2n+2)$ and 
 $(\alpha,\alpha)\geq 0$ for all $\alpha\in\dot{\Sigma}$, then
\begin{equation}\label{denomsgny}
\sum\limits_{y\in \Stab_{\dot{W}^{\#}}\dot{\rho}:\ yS\subset\dot{\Delta^+}} \sgn(y)=1.\end{equation}
\end{lem}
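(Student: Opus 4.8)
The plan is to derive~(\ref{denomsgny}) by extracting the coefficient of $e^{\dot{\rho}}$ from the expanded denominator identity~(\ref{denomexp}). Under the standing assumptions of this section ($h^{\vee}\geq 0$ and $\dot{\fg}\neq D(2|1,a)$) together with the hypothesis $\dot{\fg}\neq\mathfrak{spo}(2n|2n+2)$, one has $\dot{W}'=\dot{W}^{\#}$, so~(\ref{denomexp}) takes the form
$$\sum_{\nu\in\dot{Q}}\dot{K}(\nu)\,e^{\dot{\rho}-\nu}=\dot{R}e^{\dot{\rho}}=\sum_{w\in\dot{W}^{\#}}\sum_{\mu\in\mathbb{Z}_{\geq 0}S}(-1)^{p(\mu)}\sgn(w)\,e^{w\dot{\rho}-|w\mu|}.$$
I would compare the coefficients of $e^{\dot{\rho}}$ on the two sides. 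On the left it is $\dot{K}(0)=1$, since the constant term of $\dot{R}$ is $1$. On the right, a pair $(w,\mu)$ contributes to $e^{\dot{\rho}}$ exactly when $w\dot{\rho}-|w\mu|=\dot{\rho}$, i.e. $|w\mu|=w\dot{\rho}-\dot{\rho}$.

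The heart of the argument is to show that the only such pairs are $(w,0)$ with $w\in\Stab_{\dot{W}^{\#}}\dot{\rho}$ and $wS\subset\Delta^{+}$. First I would record that the hypothesis $(\alpha,\alpha)\geq 0$ for $\alpha\in\dot{\Sigma}$ makes $\dot{\rho}$ dominant: since $(\dot{\rho},\gamma)=\tfrac12(\gamma,\gamma)$ for every simple root $\gamma$, one gets $(\dot{\rho},\beta)\geq 0$ for all $\beta\in Q^{+}$. Applied to $\beta=|w\mu|\in Q^{+}$ (by~(\ref{wmuprop})(i)) this gives $(\dot{\rho},|w\mu|)\geq 0$, that is $(\dot{\rho},w\dot{\rho}-\dot{\rho})\geq 0$. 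On the other hand the invariant form is positive definite on $\spn_{\mathbb{R}}\dot{\Delta}^{\#}$, and writing $\dot\rho=\dot\rho^{\#}+\dot\rho^{\perp}$ for the orthogonal decomposition along $\dot{\Delta}^{\#}$ we have $w\dot{\rho}-\dot{\rho}=w\dot\rho^{\#}-\dot\rho^{\#}$ lying in that subspace (as $w$ fixes $\dot\rho^{\perp}$); since $w$ preserves the form, Cauchy--Schwarz in this positive definite space gives $(\dot{\rho},w\dot{\rho}-\dot{\rho})=(\dot\rho^{\#},w\dot\rho^{\#}-\dot\rho^{\#})\leq 0$, with equality only when $w\dot{\rho}=\dot{\rho}$. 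Hence $w\dot{\rho}=\dot{\rho}$, so $w\in\Stab_{\dot{W}^{\#}}\dot{\rho}$, and then $|w\mu|=w\dot{\rho}-\dot{\rho}=0$.

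With $|w\mu|=0$ the last implication of~(\ref{wmuprop})(iii) yields $\mu=0$ (so $p(\mu)=0$) and $wS\subset\Delta^{+}$. Thus each contributing pair is $(w,0)$ with $w\in\Stab_{\dot{W}^{\#}}\dot{\rho}$ and $wS\subset\Delta^{+}$, contributing $\sgn(w)$, so the coefficient of $e^{\dot{\rho}}$ on the right equals $\sum_{y\in\Stab_{\dot{W}^{\#}}\dot{\rho}:\,yS\subset\Delta^{+}}\sgn(y)$. Equating with the left-hand coefficient $1$ gives~(\ref{denomsgny}).

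I expect the main obstacle to be this middle step: one must rule out every pair with $w\dot\rho\neq\dot\rho$, which is exactly where both the dominance of $\dot{\rho}$ (forcing $(\dot\rho,|w\mu|)\geq0$) and the positive definiteness of the form on $\spn\dot{\Delta}^{\#}$ (forcing $(\dot\rho,w\dot\rho-\dot\rho)\leq0$) are indispensable. This is also where the excluded algebra $\mathfrak{spo}(2n|2n+2)$ drops out: there $\dot{\Delta}^{\#}$ is the smaller even component, so $\dot{W}'\neq\dot{W}^{\#}$ and~(\ref{denomexp}) is instead governed by the larger factor, which breaks the extraction above.
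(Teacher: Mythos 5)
Your proof is essentially the paper's: both extract the coefficient of $e^{\dot{\rho}}$ from~(\ref{denomexp}) (with $\dot{W}'=\dot{W}^{\#}$, which is exactly what the exclusion of $\mathfrak{spo}(2n|2n+2)$ buys), reduce to showing that a pair $(w,\mu)$ contributing to this coefficient must satisfy $w\dot{\rho}=\dot{\rho}$ and $|w\mu|=0$, and then conclude via~(\ref{wmuprop})(iii) and the fact that the constant term of $\dot{R}$ is $1$. The only divergence is the device used in that middle step: the paper observes that dominance of $\dot{\rho}$ gives $\dot{\rho}-w\dot{\rho}\in Q^+$ for $w\in\dot{W}^{\#}$, so that $|w\mu|=w\dot{\rho}-\dot{\rho}$ lies in $Q^+\cap(-Q^+)=\{0\}$, whereas you pair against $\dot{\rho}$ and invoke Cauchy--Schwarz on the positive definite span of $\dot{\Delta}^{\#}$; both devices are correct and of comparable weight.

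One imprecision should be repaired. Your intermediate claim that $(\dot{\rho},\beta)\geq 0$ for \emph{all} $\beta\in Q^+$ is false: $Q^+=\mathbb{Z}_{\geq 0}\Delta^+$ contains affine roots such as $\alpha_0=\delta-\theta$, on which $\dot{\rho}$ takes the non-positive value $-(\dot{\rho},\theta)$. The hypothesis only yields $(\dot{\rho},\beta)\geq 0$ for $\beta\in\dot{Q}^+$. This does not damage the argument, but the repair must be made explicit: by~(\ref{wmuprop})(i) one has $|w\mu|\in Q^+\cap w\mathbb{Z}S$, and since $w\in\dot{W}^{\#}$ and $S\subset\dot{\Delta}$ one gets $w\mathbb{Z}S\subset\dot{Q}$, whence $|w\mu|\in Q^+\cap\dot{Q}=\dot{Q}^+$; it is this membership in $\dot{Q}^+$, not mere membership in $Q^+$, that gives the inequality $(\dot{\rho},|w\mu|)\geq 0$ your proof needs.
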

\begin{proof}
The assumption $(\alpha,\alpha)\geq 0$ for all $\alpha\in\dot{\Sigma}$
gives $(\dot{\rho},\alpha)\geq 0$ for all $\alpha\in\dot{\Delta}^+$ (since $(\dot{\rho},\alpha_i)=\frac{1}{2}
(\alpha_i,\alpha_i)$ for $\alpha_i\in\dot{\Sigma}$).
In particular,  for $w\in\dot{W}^{\#}$ we have $\dot{\rho}-w\dot{\rho} \in Q^+$, so  the coefficient of 
 $e^{\dot{\rho}}$ in the right-hand side of~(\ref{denomexp}) 
equals to 
$$\sum\limits_{y\in \Stab_{\dot{W}^{\#}}\dot{\rho}:\ yS\subset\dot{\Delta^+}} \sgn(y).$$
Since the coefficient of $e^{\dot{\rho}}$ in $\dot{R}e^{\dot{\rho}}$ equals to $1$, this gives
the required formula.
\end{proof}

\subsection{Jantzen sum formula}
For $k\in\mathbb{C}$ we 
have the following formula, up to a constant factor,  for the vacuum Shapovalov determinant
(see formula (11) and 3.2.3 in~\cite{GKvac})
\begin{equation}\label{cordet}
\begin{array}{l}
\det S_{\nu}(k)=\prod\limits_{r=1}^{\infty}
\prod\limits_{\gamma\in {\Delta}^+\setminus \dot{\Delta}}
\prod\limits_{\alpha\in \dot{Q}^+}\phi_{r\gamma+\alpha}(k)^{d_{r,\gamma,\alpha}(\nu)},\  \text{ where }\\
R\sum\limits_{\nu} d_{r,\gamma,\alpha}(\nu) e^{-\nu}=(-1)^{(r+1)p(\gamma)}(\dim \fg_{\gamma})\, \dot{K}(\alpha) e^{-\alpha-r\gamma},
\end{array}
\end{equation}
and, for $\mu\in \fh^*$, we set
$$\phi_{\mu}(k):=2(k\Lambda_0+\rho,\mu)-(\mu,\mu)=||k\Lambda_0+\rho||^2-||k\Lambda_0+\rho-\mu||^2.$$

Recall that $\phi_{\mu}(k)=0$ if and only if $\mu\in U(k)$.
The determinant formula~(\ref{cordet}) can be rewritten as
\begin{equation}\label{dmu}\begin{array}{l}
\det S_{\nu}(k)=\prod\limits_{\mu\in U(k)} \phi_{\mu}(k)^{d_{\mu}(\nu)} ,\ \ 
\text{ where }\ 
d_{\mu}(\nu):=\sum\limits_{r=1}^{\infty}
\sum\limits_{\gamma\in {\Delta}^+\setminus \dot{\Delta}} d_{r,\gamma,\mu-r\gamma}(\nu),\ \text{ and }\\
R\sum\limits_{\nu\in Q} d_{\mu}(\nu)e^{-\nu}=\sum\limits_{r=1}^{\infty}
\sum\limits_{\gamma\in {\Delta}^+\setminus \dot{\Delta}}
(-1)^{(r+1)p(\gamma)}(\dim \fg_{\gamma} )\,\dot{K}(\mu-r\gamma) e^{-\mu}.\end{array}\end{equation}

Note that for each $\nu\in Q^+$ one has
$d_{\mu}(\nu)\not=0$ for finitely many $\mu\in U(k)$.

\subsubsection{Jantzen filtration}
Recall that for $k\in\mathbb{C}$
the Jantzen filtration of $V^k$ is a decreasing filtration
$\{\cJ^i(V^k)\}_{i=0}^{\infty}$   with $\cJ^0(V^k)=V^k$ and $\cJ^1(V^k)$ being the maximal proper
submodule of $V^k$.  

The Jantzen sum formula~\cite{Jan} is
$$\sum\limits_{i=1}^{\infty} \ch \cJ^i(k)=\sum\limits_{\mu\in U(k)} d_{\mu}(\nu) e^{k\Lambda_0-\nu}.$$

In particular, for each weight space $\nu$ one has $\cJ^s(V^k)_{\nu}=0$ for $s>>0$.

Using~(\ref{dmu}) we get
\begin{equation}\label{Janmu}
Re^{\rho}\sum\limits_{i=1}^{\infty} \ch \cJ^i(k)=\sum\limits_{\mu\in U(k)} 
\sum\limits_{r=1}^{\infty}
\sum\limits_{\gamma\in {\Delta}^+\setminus \dot{\Delta}}
(-1)^{(r+1)p(\gamma)}\dim \fg_{\gamma} \,\dot{K}(\mu-r\gamma) e^{k\Lambda_0+\rho-\mu}.\end{equation}

\subsubsection{}
\begin{lem}{s=1}
For $k\in\mathbb{Z}_{\geq 0}$ we have
$\dim \cJ^1(V^k)_{s_0.k\Lambda_0}=1$ and  $\cJ^s(V^k)_{s_0.k\Lambda_0}=0$ for $s>1$.
\end{lem}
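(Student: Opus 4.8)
The plan is to reduce the entire statement to a single $\fsl_2$-computation by observing that the weight space $V^k_{s_0.k\Lambda_0}$ is one-dimensional. Recall $s_0.k\Lambda_0 = k\Lambda_0-(k+1)\alpha_0$, where $\alpha_0=\delta-\theta$ satisfies $(\alpha_0,\alpha_0)=2$ and $\langle k\Lambda_0,\alpha_0^\vee\rangle=k$; as an $\fh$-weight it equals $k\Lambda_0+(k+1)\theta-(k+1)\delta$. First I would show $\dim V^k_{s_0.k\Lambda_0}=1$. Since $V^k$ is free over $\cU(\dot{\fg}\otimes t^{-1}\mathbb{C}[t^{-1}])$ on $v_0$, any vector of this weight is a combination of monomials $(x_1t^{-n_1})\cdots(x_pt^{-n_p})v_0$ with $n_i\geq1$ and $x_i$ of $\dot{\fg}$-weight $\dot{\beta}_i$, subject to $\sum_i n_i=k+1$ and $\sum_i\dot{\beta}_i=(k+1)\theta$. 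The first relation gives $p\leq k+1$, and rewriting the second as $\sum_i(\theta-\dot{\beta}_i)=(p-k-1)\theta$ with each $\theta-\dot{\beta}_i\in\dot{Q}^+$ (as $\theta$ is the highest root) forces $p=k+1$, every $n_i=1$, and every $\dot{\beta}_i=\theta$. Because $\dot{\fg}_\theta=\mathbb{C}e_\theta$ is one-dimensional and even, the only such monomial is $(e_\theta t^{-1})^{k+1}v_0=f_{\alpha_0}^{k+1}v_0$, which is nonzero by PBW. This count is insensitive to the level.

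Next I would set up the relevant $\fsl_2$. The elements $e_{\alpha_0}$, $\alpha_0^\vee$, $f_{\alpha_0}$ span a copy of $\fsl_2$; since $e_{\alpha_0}\in\dot{\fg}\otimes t$ kills $v_0$ and $\alpha_0^\vee$ acts on $v_0$ by the level, $v_0$ is an $\fsl_2$-highest weight vector of weight $k'$ in the level-$k'$ vacuum module. Computing the contravariant form entirely inside this $\fsl_2$ (using $\langle f_{\alpha_0}^{k+1}v_0,f_{\alpha_0}^{k+1}v_0\rangle=\langle v_0,e_{\alpha_0}^{k+1}f_{\alpha_0}^{k+1}v_0\rangle$ and $e_{\alpha_0}v_0=0$) yields
$$\langle f_{\alpha_0}^{k+1}v_0,\ f_{\alpha_0}^{k+1}v_0\rangle=(k+1)!\prod_{i=0}^{k}(k'-i),$$
a polynomial in the level $k'$ with a simple zero at $k'=k$ (only the factor $i=k$ vanishes).

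Finally I would feed this into the Jantzen sum formula in determinantal form. Deforming the level as $k'=k+t$, the quantity $\sum_{s\geq1}\dim\cJ^s(V^k)_{s_0.k\Lambda_0}$ equals the $t$-adic order of the Gram determinant of the contravariant form on the weight space $V^{k+t}_{(k+t)\Lambda_0-(k+1)\alpha_0}$. By the first paragraph this weight space is one-dimensional for all levels, so its Gram determinant is the single entry $(k+1)!\prod_{i=0}^k(k+t-i)$, whose order at $t=0$ is $1$. Hence $\sum_{s\geq1}\dim\cJ^s(V^k)_{s_0.k\Lambda_0}=1$; as the filtration is decreasing and the weight space has dimension $1$, this forces $\dim\cJ^1(V^k)_{s_0.k\Lambda_0}=1$ and $\cJ^s(V^k)_{s_0.k\Lambda_0}=0$ for $s>1$, giving both assertions simultaneously.

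The main subtlety is not a single hard step but making the reduction airtight: one must check that the contravariant form on this weight space is genuinely computed within the $\fsl_2$ (which is exactly where the one-dimensionality and $e_{\alpha_0}v_0=0$ are used) and that the level-deformation produces a simple rather than a higher-order zero. I would emphasize that this is precisely what the one-dimensionality buys us: the alternative of extracting the coefficient of $e^{s_0.k\Lambda_0}$ directly from the sum formula~(\ref{Janmu}) at $\mu=(k+1)\alpha_0$ would instead require showing that the signed sum $\sum_{r\geq1}\sum_{\gamma\in\Delta^+\setminus\dot{\Delta}}(-1)^{(r+1)p(\gamma)}\dim\fg_\gamma\,\dot{K}((k+1)\alpha_0-r\gamma)$ collapses to $1$ through extensive cancellation of partition-function terms, which is the genuinely messy route that the present argument avoids.
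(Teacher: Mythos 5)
Your proof is correct, but it takes a genuinely different route from the paper's. The paper never examines the weight space itself: it observes that the Jantzen filtration is independent of the choice of base, picks a base $\Sigma'$ containing $\alpha_0$, and then reads everything off the sum formula~(\ref{Janmu}): the support condition built into~(\ref{dmu}) forces any contributing $\mu\in U(k)$ to be of the form $i\alpha_0$ with $0<i\leq k+1$, the condition $\phi_\mu(k)=0$ forces $i=k+1$, and the unique triple $(r,\gamma,\alpha)=(k+1,\alpha_0,0)$ in~(\ref{cordet}) contributes exactly $1$. So the route you dismiss at the end as ``genuinely messy'' is precisely the paper's route, and the base-change trick makes it collapse to a single term with no cancellation whatsoever. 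Your argument instead proves the stronger statement that $\dim V^{k'}_{k'\Lambda_0-(k+1)\alpha_0}=1$ for every level $k'$, and then computes the resulting $1\times 1$ Gram determinant inside the $\fsl_2$-triple attached to $\alpha_0$, getting $(k+1)!\prod_{i=0}^{k}(k'-i)$ with a simple zero at $k'=k$. This bypasses the vacuum Shapovalov determinant formula of~\cite{GKvac} --- the only nonelementary input of the paper's proof --- and in addition exhibits the singular vector $(e_\theta t^{-1})^{k+1}v_0$ explicitly; the price is that you still invoke Jantzen's lemma (order of vanishing of the Gram determinant equals $\sum_s\dim\cJ^s$), which is the same backbone underlying~(\ref{Janmu}), so the two proofs differ only in how that order of vanishing is evaluated.

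One step you should tighten: the claim ``each $\theta-\dot{\beta}_i\in\dot{Q}^+$ as $\theta$ is the highest root'' presupposes a base of $\dot{\Delta}$ for which $\theta$ is the highest root. The base fixed in Section~\ref{jantzen} contains a maximal isotropic set and need not have this property (this is exactly why the paper changes base). Since the dimension of a weight space is base-independent, you may of course perform the count in a base adapted to the grading~(\ref{5.1}), in which $e$ is a highest-weight vector of the adjoint representation --- but this should be said explicitly. Alternatively, you can avoid bases altogether: pairing with $\theta$ gives $(\dot{\beta}_i,\theta)\leq 2$ with equality if and only if $\dot{\beta}_i=\theta$, so $2(k+1)=\sum_i(\dot{\beta}_i,\theta)\leq 2p\leq 2(k+1)$ forces $p=k+1$ and all $\dot{\beta}_i=\theta$.
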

\begin{proof}
Recall that the Shapovalov form on $V^k$ up to a contstant factor does not depend on the choice of the 
base $\Sigma$; therefore 
the Jantzen filtration also does not depend on this choice.
Choose $\Sigma'$ containing $\alpha_0$.  Let $\mu\in U(k)$
be such that $d_{\mu}((k+1)\alpha_0)\not=0$. Then  
$$(\mu-(k+1)\alpha_0)\in\mathbb{Z}_{\geq 0}\Sigma',\ \ \  
2(k\Lambda_0+\rho,\mu)=(\mu,\mu).$$
Since $\alpha_0\in\Sigma'$ and $\mu\in Q^+\setminus\dot{Q}^+$, 
the first formula implies  $\mu=i\alpha_0$ for $0<i\leq (k+1)$;  the
second formula gives $i=k+1$. The formula~(\ref{dmu}) gives
$$R\sum\limits_{\nu}d_{k\alpha_0}(\nu)e^{-\nu}=e^{-(k+1)\alpha_0}$$
($(k+1;\alpha_0;0)$ is the only suitable triple $(r;\gamma;\alpha)$ in~(\ref{dmu})).
The Jantzen sum formula gives 
$$\sum\limits_{i=1}^{\infty} \cJ^i(k)_{k\Lambda_0-(k+1)\alpha_0} =
 d_{k\alpha_0}((k+1)\alpha_0)=1.$$
This implies the statement.
\end{proof}

\subsubsection{}
\begin{cor}{cors=11}
If $k\in\mathbb{Z}_{\geq 0}$ is such that 
 $V^k$ has length two, then 
$\cJ^2(V^k)=0$ and
$\ 
Re^{\rho}\sum\limits_{i=1}^{\infty} \ch \cJ^i(k)=\dot{R}e^{k\Lambda_0+\rho}-Re^{\rho}\ch L(k\Lambda_0)$.
\end{cor}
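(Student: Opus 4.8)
The plan is to derive the statement from \Lem{s=1} in two steps: first show $\cJ^2(V^k)=0$, then translate the resulting collapse of the Jantzen sum into the asserted character identity.

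For the first step I would argue as follows. By the equivalence recalled in the introduction, the length-two hypothesis means $V^k$ has a unique nonzero proper submodule $M$; since any submodule of $M$ is also a submodule of $V^k$, this $M$ is simple. As $\cJ^1(V^k)$ is by definition the maximal proper submodule, it coincides with $M$, and $V^k/\cJ^1(V^k)=L(k\Lambda_0)$ because $V^k$ is generated by a weight vector of weight $k\Lambda_0$. Now $\cJ^2(V^k)$ is a submodule of the simple module $M$, so it is either $0$ or $M$. Here \Lem{s=1} is decisive: it gives $\cJ^2(V^k)_{s_0.k\Lambda_0}=0$, while $\dim M_{s_0.k\Lambda_0}=\dim\cJ^1(V^k)_{s_0.k\Lambda_0}=1$, so $\cJ^2(V^k)\neq M$ and hence $\cJ^2(V^k)=0$. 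Consequently $\cJ^i(V^k)=0$ for all $i\geq2$ and the sum collapses to $\sum_{i\geq1}\ch\cJ^i(k)=\ch\cJ^1(V^k)=\ch V^k-\ch L(k\Lambda_0)$.

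For the second step it remains to multiply by $Re^{\rho}$ and to verify the identity $Re^{\rho}\ch V^k=\dot{R}e^{k\Lambda_0+\rho}$. I would obtain this from the PBW description of the vacuum module: $V^k$ is free over $\cU\bigl(\dot{\fg}\otimes t^{-1}\mathbb{C}[t^{-1}]\bigr)$ on the vacuum vector, whence
$$\ch V^k=e^{k\Lambda_0}\prod_{j\geq1}(1-e^{-j\delta})^{-\dim\dot{\fh}}\prod_{\alpha\in\dot{\Delta}_{\ol{0}}}(1-e^{\alpha-j\delta})^{-\dim\fg_{\alpha}}\prod_{\alpha\in\dot{\Delta}_{\ol{1}}}(1+e^{\alpha-j\delta})^{\dim\fg_{\alpha}}.$$
Then I would split $R$ as $\dot{R}$ times the contribution of the nonfinite positive roots, namely the imaginary roots $j\delta$ (of multiplicity $\dim\dot{\fh}$) and the real roots $j\delta+\alpha$ with $j\geq1$, $\alpha\in\dot{\Delta}$. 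The imaginary factors cancel the powers of $(1-e^{-j\delta})$ above, and, using that $\dot{\Delta}_{\ol{0}}$ and $\dot{\Delta}_{\ol{1}}$ are stable under $\alpha\mapsto-\alpha$ with $\dim\fg_{\alpha}=\dim\fg_{-\alpha}$, the real factors of $R$ cancel exactly the remaining products in $\ch V^k$. This leaves $R\,\ch V^k=\dot{R}e^{k\Lambda_0}$, i.e.\ $Re^{\rho}\ch V^k=\dot{R}e^{k\Lambda_0+\rho}$. Substituting into the collapsed sum gives
$$Re^{\rho}\sum_{i=1}^{\infty}\ch\cJ^i(k)=\dot{R}e^{k\Lambda_0+\rho}-Re^{\rho}\ch L(k\Lambda_0),$$
which is the claim.

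The real content lies in the first step: the length-two hypothesis only forces $\cJ^2(V^k)$ to be all of $M$ or nothing, and it is precisely the one-dimensional weight space at $s_0.k\Lambda_0$ controlled by \Lem{s=1} that rules out the former. The character identity, although it looks like the bulk of the work, is a routine cancellation of the affine denominator against the PBW generators of the vacuum module and presents no genuine obstacle.
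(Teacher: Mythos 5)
Your proposal is correct and follows essentially the same route as the paper: the length-two hypothesis forces $\cJ^1(V^k)$ to be the unique (simple) proper submodule, Lemma~\ref{s=1} rules out $\cJ^2(V^k)$ being that submodule via the weight space at $s_0.k\Lambda_0$, and the sum collapses to $\ch V^k-\ch L(k\Lambda_0)$. The only difference is that the paper simply cites the standard identity $\ch V^k=R^{-1}\dot{R}e^{k\Lambda_0}$, whereas you verify it by the PBW cancellation argument, which is a correct (if routine) filling-in of that step.
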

\begin{proof}
Assume that $V^k$ has length two and $L$ is the socle of $V^k$.
Then there exists $s\geq 1$ such that
 $\cJ^i(V^k)=L$ for $i=1,\ldots,s$ and $\cJ^i(V^k)=0$ for $i>s$. \Lem{s=1} gives $s=1$. 
Hence
$$\sum\limits_{i=1}^{\infty} \ch \cJ^i(k)=\ch L=\ch V^k-\ch L(k\Lambda_0)$$
and the statement follows from the formula $\ch V^k=R^{-1}\dot{R} e^{k\Lambda_0}$.
\end{proof}

\subsection{Kac-Wakimoto character formula}
Recall that this Section  $h^{\vee}\geq 0$, $k\in\mathbb{Z}_{\geq 0}$ is such that $k\not=-h^{\vee}$,
and $\dot{\fg}\not=D(2|1,a)$.  
The group $W^{\#}$ is the subgroup of $W$ generated by the reflections $s_{\alpha}$ for
$\alpha\in\Delta$ such that
$(\alpha,\alpha)>0$.

\subsubsection{Group $T$}\label{TQ}
By~\cite{Kbook},   $W^{\#}=\dot{W}^{\#}\rtimes T$, where $T\subset W^{\#}$
is the so-called  ``translation group''. 
We have $T=\{t_{\mu}\}_{\mu\in Q'}$ 
where $Q'$ is the lattice spanned by $\frac{2\alpha}{(\alpha,\alpha)}$ for $\alpha\in\dot{\Sigma}^{\#}$  and 
$t_{\mu}\in GL(\fh^*)$ is given by the formula
\begin{equation}\label{tnu}
t_{\mu}(\lambda):=\lambda+(\lambda,\delta)\mu-((\lambda,\mu)+\frac{(\lambda,\delta)}{2}
(\mu,\mu))\delta. \end{equation}
Observe that  $Q'$ and $Q^{\#}$  are lattices of the same rank.

\subsubsection{}
In~\cite{GKadm} we proved 
the following Kac-Wakimoto type character formula in the cases
when  $h^{\vee}\geq 0$, 
$k\in\mathbb{Z}_{\geq 0}$, 
and $k+h^{\vee}\not=0$:
\begin{equation}\label{KWformula}
R e^{\rho} \ch L(k\Lambda_0)=j_0^{-1}\cF_{W^{\#}\dot{W}} \bigl(\frac{e^{k\Lambda_0+\rho}}
{\prod\limits_{\beta\in S} (1+e^{-\beta})}\bigr),\end{equation}
where $W^{\#}\dot{W}$ is the subgroup of $W$ generated by the subgroups $W^{\#}$ and $\dot{W}$ (and 
$j_0$ is a positive integer introduced  in  Section~\ref{sectdenomfin}).
Thus $W^{\#}\dot{W}=\dot{W}\rtimes T$. Let 
$W':=\dot{W}'T$ be the subgroup of $W$ 
generated by the subgroups $\dot{W}'$ and $T$, where  $\dot{W}'$ is as in~\ref{sectdenomfin}. One has 
$$W'=\left\{\begin{array}{ll}
{W}^{\#}=\dot{W}^{\#}\rtimes T & \text{ for } \dot{\fg}\not=\mathfrak{spo}(2n|2n+2)\\
\dot{W}'\times T & \text{ for } \dot{\fg}=\mathfrak{spo}(2n|2n+2).\end{array}
\right.$$

Combining~(\ref{denid2}) and~(\ref{KWformula}) we obtain
$$\begin{array}{l}
R e^{\rho} \ch L(k\Lambda_0)=j_0^{-1}\cF_{W^{\#}\dot{W}} \bigl(\frac{e^{k\Lambda_0+\rho}}
{\prod\limits_{\beta\in S} (1+e^{-\beta})}\bigr)=\cF_{T}\bigl(j_0^{-1}\cF_{\dot{W}} \bigl(\frac{e^{k\Lambda_0+\rho}}
{\prod\limits_{\beta\in S} (1+e^{-\beta})}\bigr)\bigr)=\\
\cF_{T}\bigl(\cF_{\dot{W}'} \bigl(\frac{e^{k\Lambda_0+\rho}}
{\prod\limits_{\beta\in S} (1+e^{-\beta})}\bigr)\bigr)
=\cF_{{W}'} \bigl(\frac{e^{k\Lambda_0+\rho}}
{\prod\limits_{\beta\in S} (1+e^{-\beta})}\bigr)).\end{array}$$
Hence we obtain the following formula which holds for all $k\in\mathbb{Z}_{\geq 0}$,
$k\not=-h^{\vee}$, 
if $\fg\not=D(2|1,a)$:
\begin{equation}\label{KWformulashort}
R e^{\rho} \ch L(k\Lambda_0)=\cF_{W'} \bigl(\frac{e^{k\Lambda_0+\rho}}
{\prod\limits_{\beta\in S} (1+e^{-\beta})}\bigr).\end{equation}
For  $\dot{\fg}\not=\mathfrak{spo}(2n|2n+2)$ one has
$W'=W^{\#}$ and~(\ref{KWformulashort}) is the usual Kac-Wakimoto formula
 proven in Section 4 of~\cite{GKadm} for $\dot{\fg}\not=\mathfrak{osp}(2n+2|2n)$
 and $\dot{\fg}\not=D(2|1,a)$.

\subsection{}
\begin{cor}{cors=1}
If $k\in\mathbb{Z}_{\geq 0}$ is such that 
 $V^k$ has length two, then 
\begin{equation}\label{mainformulalong}
Re^{\rho}\sum\limits_{i=1}^{\infty} \ch \cJ^i(k)=-\sum\limits_{\nu\in Q'\setminus\{0\}} 
t_{\nu}\bigl(\displaystyle\cF_{ \dot{W}'}
(\frac{e^{k\Lambda_0+\rho}}{\prod\limits_{\beta\in S}(1+e^{-\beta})})\bigr).
\end{equation}
\end{cor}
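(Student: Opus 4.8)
The plan is to start from \Cor{cors=11}, which under the length-two hypothesis already collapses the statement to a denominator-type identity. Indeed, that corollary gives
$$Re^{\rho}\sum\limits_{i=1}^{\infty}\ch\cJ^i(k)=\dot{R}e^{k\Lambda_0+\rho}-Re^{\rho}\ch L(k\Lambda_0),$$
so it suffices to identify the right-hand side here with the right-hand side of~(\ref{mainformulalong}). The idea is to rewrite both $\dot{R}e^{k\Lambda_0+\rho}$ and $Re^{\rho}\ch L(k\Lambda_0)$ through the already-established identities~(\ref{denid2}) and~(\ref{KWformulashort}), after which a single translation orbit term will cancel off the isolated summand $\dot{R}e^{k\Lambda_0+\rho}$.

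First I would feed~(\ref{KWformulashort}) in, writing $Re^{\rho}\ch L(k\Lambda_0)=\cF_{W'}(F)$ with $F:=e^{k\Lambda_0+\rho}/\prod_{\beta\in S}(1+e^{-\beta})$. Since $W'=\dot{W}'T$ with $T$ normal and the product map $Q'\times\dot{W}'\to W'$, $(\nu,y)\mapsto t_{\nu}y$, a bijection, the operator factorizes as $\cF_{W'}=\cF_{T}\circ\cF_{\dot{W}'}$ (exactly the factorization used just before~(\ref{KWformulashort}), valid both when $W'=W^{\#}$ and when $W'=\dot{W}'\times T$). By~(\ref{denid2}) one has $\cF_{\dot{W}'}(F)=\dot{R}e^{k\Lambda_0+\rho}$, whence
$$Re^{\rho}\ch L(k\Lambda_0)=\cF_{T}\bigl(\dot{R}e^{k\Lambda_0+\rho}\bigr)=\sum\limits_{\nu\in Q'}\sgn(t_{\nu})\,t_{\nu}\bigl(\dot{R}e^{k\Lambda_0+\rho}\bigr).$$

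The key remaining point — and the only step that is not purely formal bookkeeping — is that $\sgn(t_{\nu})=1$ for every $\nu\in Q'$, so that $\cF_{T}$ carries no signs. Since $Q'$ is the coroot lattice of the finite root system $\dot{\Delta}^{\#}$, the length of $t_{\nu}$ is congruent modulo $2$ to $\langle\nu,2\dot{\rho}^{\#}\rangle$, where $\dot{\rho}^{\#}$ is the half-sum of positive roots of $\dot{\Delta}^{\#}$, and this is even because $\langle\nu,\dot{\rho}^{\#}\rangle\in\mathbb{Z}$ on the coroot lattice. Hence $\cF_{T}=\sum_{\nu\in Q'}t_{\nu}$, whose $\nu=0$ summand is $t_{0}(\dot{R}e^{k\Lambda_0+\rho})=\dot{R}e^{k\Lambda_0+\rho}$. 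Subtracting from $\dot{R}e^{k\Lambda_0+\rho}$ therefore cancels precisely this term, leaving $\dot{R}e^{k\Lambda_0+\rho}-Re^{\rho}\ch L(k\Lambda_0)=-\sum_{\nu\in Q'\setminus\{0\}}t_{\nu}(\dot{R}e^{k\Lambda_0+\rho})$. Finally I would rewrite $\dot{R}e^{k\Lambda_0+\rho}=\cF_{\dot{W}'}(F)$ once more via~(\ref{denid2}) to reach exactly~(\ref{mainformulalong}). I expect the sign computation $\sgn(t_{\nu})=1$ to be the only genuine obstacle; everything else is a chaining of identities proved earlier in the section.
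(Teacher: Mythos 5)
Your proposal is correct and follows essentially the same route as the paper: both proofs chain Corollary~\ref{cors=11} with the identities~(\ref{KWformulashort}) and~(\ref{denid2}), using the decomposition $W'=\dot{W}'T$ so that the terms with $\nu\in Q'\setminus\{0\}$ are exactly $\cF_{W'\setminus\dot{W}'}$ and the $\nu=0$ term cancels $\dot{R}e^{k\Lambda_0+\rho}$. The sign fact $\sgn(t_{\nu})=1$ that you single out is indeed needed, but it is already implicit in the paper's factorization $\cF_{W'}=\cF_{T}\circ\cF_{\dot{W}'}$ preceding~(\ref{KWformulashort}), so making it explicit is a refinement of presentation rather than a different argument.
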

\begin{proof}
Combining~(\ref{KWformulashort})  with~(\ref{denid2}) we obtain
$$Re^{\rho}\ch L(k\Lambda_0)-\dot{R}e^{k\Lambda_0+\rho}=\displaystyle\cF_{ W'\setminus\dot{W}'}\bigl(
\frac{e^{k\Lambda_0+\rho}}{\prod\limits_{\beta\in S}(1+e^{-\beta})}\bigr)
=\sum\limits_{\nu\in Q'\setminus\{0\}} 
t_{\nu}\bigl(\displaystyle\cF_{ \dot{W}'}
(\frac{e^{k\Lambda_0+\rho}}{\prod\limits_{\beta\in S}(1+e^{-\beta})})\bigr).$$
Using~\Cor{cors=11} we obtain~(\ref{mainformulalong}).
\end{proof}

\subsection{Remark: Lie algebra case}
In this case the Jantzen sum formula can be simplified 
(we will not use this simplification); a similar simplification
can be done for  $\fosp(1|2n)$. Namely, the following holds.

\subsubsection{}
\begin{prop}{}
 If $\dot{\fg}$ is a Lie algebra, then for $k\in\mathbb{Z}_{\geq 0}$ we have
$$Re^{\rho}\sum\limits_{i=1}^{\infty} \ch \cJ^i(k)=\sum\limits_{w\in\dot{W}} 
\sum\limits_{\alpha\in {\Delta}^+_{\re}\setminus \dot{\Delta}}
\sgn(w) e^{ws_{\alpha}(k\Lambda_0+\rho)}.$$
\end{prop}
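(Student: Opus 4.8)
The plan is to simplify the general Jantzen sum formula~(\ref{Janmu}) under the hypothesis that $\dot{\fg}$ is a Lie algebra. First I would use that $p(\gamma)=0$ for all $\gamma$ (so the signs $(-1)^{(r+1)p(\gamma)}$ disappear) and that $\dim\fg_{\gamma}=1$ for every real root $\gamma$. Since $\dot{\Delta}_{\ol{1}}=\emptyset$, the Weyl denominator identity for $\dot{\fg}$ reads $\dot{R}e^{\dot{\rho}}=\sum_{y\in\dot{W}}\sgn(y)e^{y\dot{\rho}}$; as $\dot{\rho}$ is regular, the map $y\mapsto y\dot{\rho}$ is injective, so $\dot{K}(\nu)=\sgn(y)$ when $\nu=\dot{\rho}-y\dot{\rho}$ for some $y\in\dot{W}$ and $\dot{K}(\nu)=0$ otherwise. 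Substituting this into~(\ref{Janmu}), every nonzero term is indexed by a triple $(y,r,\gamma)$ through $\mu=r\gamma+(\dot{\rho}-y\dot{\rho})$; using $\rho=h^{\vee}\Lambda_0+\dot{\rho}$ together with the fact that $y\in\dot{W}$ fixes both $\Lambda_0$ and $\delta$, the exponent becomes $k\Lambda_0+\rho-\mu=y(k\Lambda_0+\rho)-r\gamma$.

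Next I would split the sum over $\gamma\in\Delta^+\setminus\dot{\Delta}$ into the imaginary roots $n\delta$ ($n\geq 1$) and the real roots $\Delta^+_{\re}\setminus\dot{\Delta}$. For an imaginary $\gamma=n\delta$, a nonzero $\dot{K}(\mu-rn\delta)$ forces $\mu-rn\delta\in\dot{Q}^+$, whereas $\mu\in U(k)$ amounts to $2\bigl(y(k\Lambda_0+\rho),rn\delta\bigr)=||rn\delta||^2=0$; since $\delta$ is isotropic while $(y(k\Lambda_0+\rho),\delta)=(k\Lambda_0+\rho,\delta)=k+h^{\vee}\neq 0$ and $rn\geq 1$, these conditions are incompatible, so all imaginary-root terms vanish. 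For a real root $\gamma$ the condition $\mu\in U(k)$ reads $2(y(k\Lambda_0+\rho),\gamma)=r(\gamma,\gamma)$, i.e. $r=(y(k\Lambda_0+\rho),\gamma^{\vee})$, and then $y(k\Lambda_0+\rho)-r\gamma=s_{\gamma}\bigl(y(k\Lambda_0+\rho)\bigr)$. Hence the real part contributes $\sum_{y\in\dot{W}}\sum_{\gamma}\sgn(y)\,e^{s_{\gamma}y(k\Lambda_0+\rho)}$, the sum being over those $\gamma$ for which $r=(y(k\Lambda_0+\rho),\gamma^{\vee})$ is a positive integer.

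The key observation is that this positivity constraint is vacuous. For $k\in\mathbb{Z}_{\geq 0}$ the weight $k\Lambda_0+\rho$ is regular dominant integral: one computes $(k\Lambda_0+\rho,\alpha_0^{\vee})=k+1\geq 1$ and $(k\Lambda_0+\rho,\dot{\alpha}^{\vee})=1$ for $\dot{\alpha}\in\dot{\Sigma}$, whence $(k\Lambda_0+\rho,\beta^{\vee})\geq 1$ for every positive real root $\beta$. Applying this to $\beta=y^{-1}\gamma$ and using $(y(k\Lambda_0+\rho),\gamma^{\vee})=(k\Lambda_0+\rho,(y^{-1}\gamma)^{\vee})$, we see that $r$ is automatically a positive integer, so the inner sum runs over all of $\Delta^+_{\re}\setminus\dot{\Delta}$. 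Finally I would reindex: for fixed $y$ put $w:=y$ and $\alpha:=y^{-1}\gamma$, which ranges over $\Delta^+_{\re}\setminus\dot{\Delta}$ as $\gamma$ does; since $s_{\gamma}y=y\,s_{y^{-1}\gamma}=ws_{\alpha}$ and $\sgn(y)=\sgn(w)$, the real part equals $\sum_{w\in\dot{W}}\sum_{\alpha\in\Delta^+_{\re}\setminus\dot{\Delta}}\sgn(w)\,e^{ws_{\alpha}(k\Lambda_0+\rho)}$, which is the claimed formula.

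I expect the two genuinely substantive points to be the vanishing of the imaginary-root contribution (clean once one uses isotropy of $\delta$ and $(k\Lambda_0+\rho,\delta)=k+h^{\vee}\neq 0$) and the vacuity of the constraint $r\geq 1$, which rests on $k\Lambda_0+\rho$ being regular dominant integral for $k\in\mathbb{Z}_{\geq 0}$; the reduction of $\dot{K}$ via the finite denominator identity and the final reindexing are then bookkeeping.
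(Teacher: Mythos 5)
Your proposal is correct and follows essentially the same route as the paper: reduce $\dot{K}$ via the Weyl denominator identity for $\dot{\fg}$, substitute into the Jantzen sum formula, observe that imaginary roots contribute nothing and that the real-root condition $\mu\in U(k)$ produces the reflections $s_{\gamma}$, and finally reindex by $w=y$, $\alpha=y^{-1}\gamma$. The only cosmetic difference is at the last step, where the paper invokes integrability of $L(k\Lambda_0)$ to conclude that the constraint $r\in\mathbb{Z}_{\geq 1}$ is vacuous, while you verify the same fact directly from $(k\Lambda_0+\rho,\beta^{\vee})\geq 1$ for positive real roots $\beta$.
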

\begin{proof}
One has $\dot{R} e^{\rho}=\cF_{\dot{W}}(e^{\rho})$ 
and $\Stab_{\dot{W}}\rho=\{\Id\}$,
which gives
$$\dot{K}(\nu)=\left\{\begin{array}{ll}
\sgn(w) & \text{ if }\ \ \nu=\rho-w\rho\ \text{ for some } w\in \dot{W}\\
0 & \text{ otherwise}.
\end{array}\right.$$
Take any $k\not=-h^{\vee}$.
Assume that  $\mu\in U(k)$, $\gamma\in {\Delta}^+\setminus \dot{\Delta}$ and $r\geq 1$ are such that $\dot{K}(\mu-r\gamma)\not=0$. By above,
$$\mu=\rho-w\rho+r\gamma\ \text{ for some } w\in\dot{W},$$
so $\mu\in U(k)$ implies
$$||k\Lambda_0+\rho-(\rho-w\rho+r\gamma)||^2=||k\Lambda_0+\rho||^2,$$
that is  $||k\Lambda_0+w\rho-r\gamma||^2=||k\Lambda_0+\rho||^2$ or, equivalently,
$$||k\Lambda_0+\rho-r\alpha||^2=||k\Lambda_0+\rho||^2\ \text{ for }\alpha:=w^{-1}\gamma.$$
This implies $\alpha\not\in\mathbb{Z}\delta$ and
$k\Lambda_0+\rho-r(w^{-1}\gamma)=s_{\alpha}(k\Lambda_0+\rho)$.
Then 
$$k\Lambda_0+\rho-\mu=k\Lambda_0+w\rho+r\gamma=ws_{\alpha}(k\Lambda_)+\rho).$$
Using~(\ref{Janmu}) we obtain
$$Re^{\rho}\sum\limits_{i=1}^{\infty} \ch \cJ^i(k)=\sum\limits_{w\in\dot{W}}
\sum\limits_{\alpha\in A_k }
\sgn(w) e^{ws_{\alpha}(k\Lambda_0+\rho)}$$
where
$$A_k:=\{\alpha\in {\Delta}^+_{\re}\setminus \dot{\Delta}|\  k\Lambda_0-s_{\alpha}.(k\Lambda_0)\in\mathbb{Z}_{\geq 1}\alpha\}.$$
For   $k\in\mathbb{Z}_{\geq 0}$, the module $L(k\Lambda_0)$ is integrable; this implies
 $A_k={\Delta}^+_{\re}\setminus \dot{\Delta}$ as required.
\end{proof}

\subsubsection{}
Using $\Stab_W(k\Lambda_0+\rho)=\{\Id\}$, we obtain the following criterion:
if $\dot{\fg}$ is a Lie algebra and  $k\in\mathbb{Z}_{\geq 0}$, then the Jantzen filtration of $V^k$ has length $2$ if and only if
$$W\setminus\dot{W}=\{ws_{\alpha}|\ \alpha\in\Delta_{re}^+\setminus\dot{\Delta},\ \  w\in\dot{W}\}.$$
It is not hard to see that this conidtion holds only for $\dot{\fg}=\fsl_2$.

%

\subsection{More notation}\label{morenot}
In this subsection we introduce notation which will used in the proof of~\Thm{thm3}.

\subsubsection{Lattice cones}\label{cone}
For $\lambda\in\fh^*$ and linearly independent vectors $\alpha_1,\ldots,\alpha_s\in\fh^*$ we call
the set $\{\lambda+\sum\limits_{i=1}^s \mathbb{Z}_{\geq 0}\alpha_i\}\subset\fh^*$ 
an {\em  $s$-dimensional lattice cone}.

We will use the following observation: 

{\em a lattice cone of dimension $s$ does not lie in a 
finite union of  lattice cones of   smaller dimension.}

This follows from the fact that, if we identify $\fh^*$ with a real vector space $\mathbb{R}^{2\dim\fh^*}$, 
and denote by $N(R)$ the number of points  in the intersection of a  lattice cone of dimension $s$ and  the ball of raduis $R$, then $N(R)\sim CR^s$ for some $C>0$.

\subsubsection{Map $\dot{P}$}
We  define the projections $\dot{P}: Q\to \dot{Q}$  by the formula
$\dot{P}(\nu):=\dot{\nu}$, where $\nu=\dot{\nu}+a\delta$ for some $a\in\mathbb{C}$.
  Note that the restriction of $\dot{P}$ to $U_k$ is injective
(since $k+h^{\vee}\not=0$).  
 
 \subsubsection{Maps ${P}_+, P_S$}
Recall that  $\dot{\Sigma}$ contains a maximal isotropic set $S$: this is a set
of linearly independent mutually orthogonal roots  with the cardinality equal to the defect of $\dot{\fg}$.

Let $\dot{\fg}\not=\mathfrak{spo}(2n|2n+2)$. In this case
\begin{itemize}
\item  $\dot{\Delta}$ admits a base consisting of  roots with non-negative square lengths;
  \item
 the union $\dot{W}^{\#}(-S)\cup \dot{W}^{\#} S$ contain all isotropic roots in $\dot{\Delta}$.\end{itemize}
 As a consequence, $\dot{Q}=\mathbb{Z}S\oplus \dot{Q}_+$
 where $\dot{Q}_+$ is the span of $\{\alpha\in\dot{\Delta}|\ (\alpha,\alpha)>0\}$; notice that
 $\dot{Q}_+$ is the lattice which contains the lattice $\mathbb{Z}\dot{\Delta}^{\#}$, 
 these lattices are equal if $\dot{\Delta}$ does not have odd non-isotropic roots.
  We  define projections  $P_+: Q\to \dot{Q}_+$, $P_S: Q\to  
 \mathbb{Z}S$ 
  by the formulae
$ P_+(\nu):=\nu_+$,  $P_S(\nu):=\nu_S$,
where $\dot{\nu}=\dot{\nu}_++\nu_S$ for  $\dot{\nu}_+\in \dot{Q}_{+}$ and ${\nu}_S\in \mathbb{Z}S$.

\subsubsection{Sets $Y_{\cJ}(\mu)$, $Y_L(\mu)$}
We fix $k\in\mathbb{Z}_{\geq 0}$ such that $k+h^{\vee}\not=0$. 
We set
$$\begin{array}{lcl}
q_{\cJ}:=e^{-k\Lambda_0}R\sum\limits_{i=1}^\infty \ch \cJ^i(k), & & q_L:=e^{-k\Lambda_0-\rho}\sum\limits_{\nu\in Q'\setminus\{0\}} 
t_{\nu}\bigl(\displaystyle\cF_{ \dot{W}'}
(\frac{e^{k\Lambda_0+\rho}}{\prod\limits_{\beta\in S}(1+e^{-\beta})}).\end{array}$$
By~\Cor{cors=1},  we have
\begin{equation}\label{34}
\supp(q_{\cJ})=\supp(q_{L})\ \ \text{ if $V^k$ has length $2$}.
\end{equation}

For $\dot{\fg}\not=\mathfrak{spo}(2n|2n+2)$  for each $\mu\in\mathbb{Z}S$ we set
$$\begin{array}{l}
Y_{\cJ}(\mu):=\{\eta\in \dot{Q}_+|\ -(\eta+\mu)\in\dot{P}\bigl(\supp(q_{\cJ})\bigr)\}, \\
Y_{L}(\mu):=\{\eta\in \dot{Q}_+|\ -(\eta+\mu)\in\dot{P}\bigl(\supp(q_{L})\bigr)\}.\end{array}$$
By~(\ref{34}), $V^k$ has length greater than $2$ if  $Y_{\cJ}(\mu_0)\not=Y_L(\mu_0)$ for some 
$\mu_0$.

\subsection{ Proof of~\Thm{thm3} (i)}\label{defectzero}
Let us illustrate  the proof of~\Thm{thm3} on the case when $\dot{\fg}$ has zero defect
(another, more complicated proof, is given in~\cite{GKvac}).
In this case  $S$ is empty.  We fix $k\in\mathbb{Z}_{\geq 0}$.

One has $\dot{R}e^{\dot{\rho}}=\cF_{\dot{W}}(e^{\dot{\rho}})$, so
 $\supp(R)=\{w\dot{\rho}-\dot{\rho}\}_{w\in\dot{W}}$. Therefore
$$\dot{P}(\supp(q_{\cJ}))\subset \{w\dot{\rho}-\dot{\rho}+r\gamma|\ w\in\dot{W},\ \gamma\in\dot{\Delta}\cup\{0\}, r\geq 1\}.$$
Since $\dot{W}$ and $\dot{\Delta}\cup\{0\}$ are finite sets, the set $\dot{P}(\supp(q_{\cJ}))$ lies in a finite union of one-dimensional affine
lattice cones. On the other hand, we have
$$q_L=e^{-k\Lambda_0-\rho}\sum\limits_{\nu\in Q'\setminus\{0\}} 
t_{\nu}\bigl(\displaystyle\cF_{ \dot{W}^{\#}}(e^{k\Lambda_0+\rho})\bigr).$$
Since $k\geq 0$ we have 
$(k\Lambda_0+\rho,\alpha)>0$ for all $\alpha\in\Sigma$; this gives $\Stab_W(k\Lambda_0+\rho)=\{\Id\}$.
Using~(\ref{tnu}) we obtain
$$\dot{P}(\supp(q_L))=\{(k+h^{\vee})\alpha+w\dot{\rho}-{\dot{\rho}}|\ \alpha\in {Q}'\setminus\{0\},\ w\in\dot{W}\}.$$
In particular, $\dot{P}(\supp(q_L))\cup\{0\}$ contains $(k+h^{\vee}) {Q}'$. Since $Q'\cong \mathbb{Z}^m$, where $m$ is the rank of $\dot{\fg}$, the set $\dot{P}(\supp(q_L))\cup\{0\}$  contains a lattice cone of dimension $m$. (In fact, since $\dot{W}$ is finite, this set is a finite union of  lattice cone of dimension $m$). By above,  
$\dot{P}(\supp(q_{\cJ}))$ lies in a finite union of
lattice cones of dimension $1$.  Hence $\dot{P}(\supp(q_{\cJ}))\not=\dot{P}(\supp(q_L))$ for $m>1$,
in contradiction with~(\ref{34}). If $m=1$, then $\dot{\fg}=\fsl_2$ or
$\mathfrak{spo}(2|1)$, and the claim follows from~\Thm{thm2}.

\subsection{Plan of the proof of~\Thm{thm3} (ii)--(iv)}
From now on until Section~\ref{spo2n2n+2} we assume  $\dot{\fg}\not=\mathfrak{spo}(2n|2n+2)$.
By~\Cor{cors=1} it is sufficies  to verify that   $q_{\cJ}+q_L\not=0$.
By~\Cor{YJmu0}, for all $\mu_0\in\mathbb{Z}S$ the set $Y_{\cJ}(\mu_0)$ lie in a finite union of one-dimensional
affine lattice cones. Thus it  is enough to find $\mu_0$ such that
 $Y_L(\mu_0)\cup\{0\}$ contains a two-dimensional lattice cone.

We call a weight $\lambda\in\fh^*$ $W^{\#}$-{\em regular} if $\Stab_{W^{\#}}\lambda=\{\Id\}$.
Recall that $Q'\cong \mathbb{Z}^{\ell}$ where $\ell$ is the rank of $\dot{\fg}^{\#}$; one has
 $\ell>1$.  Using~\Cor{coreta} we show that
$Y_{L}(\mu_0)\cup\{0\}$ contains an $\ell$-dimensional lattice cone  if $\mu_0\in\mathbb{Z}S$ is such that 
$k\Lambda_0+\rho-\mu_0$ is $W^{\#}$-regular.   This approach generalizes the defect zero case:  $k\Lambda_0+\rho$ is $W^{\#}$-regular if $\deff\dot{\fg}=0$.
Note that
for an  integral  $W^{\#}$-regular weight $\lambda$ one has $(\lambda,\alpha^{\vee})\geq (\rho^{\#},\alpha^{\vee})$ for any $\alpha\in\Sigma(\fg^{\#})$, so $(\lambda,\delta)\geq (\rho^{\#},\delta)=h^{\vee}_{\#}$
where $h^{\vee}_{\#}$ is the dual Coxeter number of $\dot{\fg}^{\#}$.
Thus for $h^{\vee}\in\mathbb{Z}$ the $W^{\#}$-regularity of $k\Lambda_0+\rho-\mu_0$ implies
$k\geq h^{\vee}_{\#}-h^{\vee}$. In Section~\ref{thm3iii} we will exhibit such $\mu_0$
for each pair $(\dot{\fg},k)$ listed in~\Thm{thm3} (iv). 

For the rest of the cases we show that $Y_{L}(\mu_0)\cup\{0\}$ contains a two-dimensional lattice cone.
We check this in the following way. We fix $\Sigma$ containing a maximal isotropic set $S$ in such a way that
$(\alpha,\alpha)\geq 0$ for all $\alpha\in\Sigma$.  
Then $k\Lambda_0+\rho$ is $\fg^{\#}$-dominant and
$\Stab_{W^{\#}}(k\Lambda_0+\rho)=\Stab_{\dot{W}^{\#}} \dot{\rho}$ if $(k\Lambda_0+\rho,\alpha_0)>0$
(this holds for all $k\geq 0$ if $\alpha_0\in\Sigma$ and for $k>0$ otherwise).
We set 
$$\dot{W}_{\rho}:=\Stab_{\dot{W}^{\#}} \dot{\rho}.$$

By~\Cor{corbeta2}, $Y_L(0)\cup\{0\}$ contains $(k+h^{\vee})\eta$ if $\eta$ lies in the set 
$$\{\eta\in Q'|\ (\eta,\beta')\geq 0 \ \text{ for all } \beta'\in  (\dot{W}_{\rho}S\cap \Delta^+),\ \ (\eta,\beta')\geq 0 \ \text{ for all } \beta'\in  (\dot{W}_{\rho}S\cap \Delta^-)\}.$$
In Section~\ref{ABD}) we show that the above set contains a two-dimensional lattice cone, so
 $Y_L(0)\cup\{0\}$ contains a two-dimensional lattice cone.

We start from the following useful lemma.

\subsubsection{}
\begin{lem}{}
For all $w\in W^{\#}$ and $\mu\in\mathbb{Z}_{\geq 0}S$
we have
\begin{equation}\label{PSwmu}
|w\mu|=wP_S(|w\mu|).\end{equation}
\end{lem}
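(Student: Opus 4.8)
The plan is to reduce the identity to a single statement about how $W^{\#}$ acts on the sublattice $\mathbb{Z}S$. First I would record that by the definition~(\ref{wmudef}) of $|w\mu|$ we may write $|w\mu|=w\mu^{*}$, where
$$\mu^{*}:=\sum_{\beta\in S:\ w\beta\in\Delta^{+}}m_{\beta}\beta-\sum_{\beta\in S:\ w\beta\notin\Delta^{+}}(m_{\beta}+1)\beta\ \in\ \mathbb{Z}S,$$
which is exactly the inclusion $|w\mu|\in w\mathbb{Z}S$ of~(\ref{wmuprop})(i). Consequently it suffices to prove $P_{S}(|w\mu|)=\mu^{*}$, for then $wP_{S}(|w\mu|)=w\mu^{*}=|w\mu|$. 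Since $|w\mu|=w\mu^{*}$, this follows at once from the general claim that $P_{S}(w\sigma)=\sigma$ for every $\sigma\in\mathbb{Z}S$ and every $w\in W^{\#}$, so I would aim to establish that claim.

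To prove the claim I would use the decomposition $Q=\mathbb{Z}S\oplus\dot{Q}_{+}\oplus\mathbb{Z}\delta$, under which $P_{S}$ is the projection onto $\mathbb{Z}S$ with kernel $K:=\dot{Q}_{+}\oplus\mathbb{Z}\delta$; thus $P_{S}(w\sigma)=\sigma$ is equivalent to $w\sigma-\sigma\in K$. Here I would invoke the semidirect decomposition $W^{\#}=\dot{W}^{\#}\rtimes T$ of Section~\ref{TQ} and write $w=ut_{\nu}$ with $u\in\dot{W}^{\#}$ and $t_{\nu}\in T$. Since $(\sigma,\delta)=0$, formula~(\ref{tnu}) gives $t_{\nu}\sigma=\sigma-(\sigma,\nu)\delta$, and as $u$ fixes $\delta$ we get $w\sigma=u\sigma-(\sigma,\nu)\delta$. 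The term $-(\sigma,\nu)\delta$ is an integer multiple of $\delta$ (being a difference of elements of $Q$), hence lies in $K$, so everything comes down to checking $u\sigma-\sigma\in\dot{Q}_{+}$.

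For that last step I would argue that $u$ is a product of reflections $s_{\dot\alpha}$ with $\dot\alpha\in\dot{\Delta}^{\#}\subset\dot{Q}_{+}$; each such reflection moves any vector by a real multiple of $\dot\alpha$, and $\dot{W}^{\#}$ preserves $\mathbb{R}\dot{Q}_{+}$, so an induction on the length of $u$ yields $u\sigma-\sigma\in\mathbb{R}\dot{Q}_{+}$. On the other hand $u\sigma-\sigma\in\dot{Q}$, and the direct sum $\dot{Q}=\mathbb{Z}S\oplus\dot{Q}_{+}$ forces $\dot{Q}\cap\mathbb{R}\dot{Q}_{+}=\dot{Q}_{+}$, whence $u\sigma-\sigma\in\dot{Q}_{+}$. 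Combining the two contributions gives $w\sigma-\sigma\in K$, i.e.\ $P_{S}(w\sigma)=\sigma$, completing the proof. The only genuinely delicate point—and the place where the standing hypothesis $\dot{\fg}\neq\mathfrak{spo}(2n|2n+2)$ is used—is the splitting $\dot{Q}=\mathbb{Z}S\oplus\dot{Q}_{+}$, both for identifying $\ker P_{S}$ and for passing from $\mathbb{R}\dot{Q}_{+}$ back to the integral lattice $\dot{Q}_{+}$; the remaining verifications are routine computations with the affine action.
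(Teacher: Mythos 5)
Your argument is correct, but it takes a genuinely different route from the paper's, whose proof is essentially three lines: by~(\ref{wmudef}) one has $w^{-1}|w\mu|\in\mathbb{Z}S$, hence $P_S(w^{-1}|w\mu|)=w^{-1}|w\mu|$; since $w^{-1}|w\mu|-|w\mu|\in Q^{\#}$ and $Q^{\#}\subseteq \dot{Q}_{+}\oplus\mathbb{Z}\delta=\ker P_S$, it follows that $P_S(|w\mu|)=P_S(w^{-1}|w\mu|)=w^{-1}|w\mu|$, and applying $w$ gives~(\ref{PSwmu}). The single fact doing the work there is that $v-wv\in Q^{\#}$ for every $v\in Q$ and $w\in W^{\#}$ (because $W^{\#}$ is generated by reflections $s_{\alpha}$, $\alpha\in\Delta^{\#}$, and $\langle v,\alpha^{\vee}\rangle\in\mathbb{Z}$ for $v\in Q$), combined with $Q^{\#}\subseteq\ker P_S$. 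You prove the same first reduction ($|w\mu|=w\mu^{*}$ with $\mu^{*}\in\mathbb{Z}S$), but then establish $P_S(w\sigma)=\sigma$ by splitting $w=ut_{\nu}$ along $W^{\#}=\dot{W}^{\#}\rtimes T$, computing the translation part explicitly from~(\ref{tnu}), and handling $u\in\dot{W}^{\#}$ by a real-span argument. Your version is more self-contained, in that it re-derives the needed $W^{\#}$-invariance of $P_S$ from the explicit affine action; the paper's inclusion $v-wv\in Q^{\#}$ disposes of the translation part and the finite part in one stroke and never leaves the lattice.

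One step of yours needs a patch. The assertion that the direct sum $\dot{Q}=\mathbb{Z}S\oplus\dot{Q}_{+}$ ``forces'' $\dot{Q}\cap\mathbb{R}\dot{Q}_{+}=\dot{Q}_{+}$ is not a valid implication for abstract subgroups: $\mathbb{Z}$ and $\mathbb{Z}\sqrt{2}$ form a direct sum inside $\mathbb{R}$ whose real spans coincide. The conclusion is true here, but for a further reason: $\mathbb{Z}S$ and $\dot{Q}_{+}$ are sublattices of $\dot{Q}$, which for the $\dot{\fg}$ treated in this section (with $D(2|1,a)$ excluded) is a finitely generated subgroup of a rational form of $\dot{\fh}^{*}$, hence free with rank equal to the dimension of its real span; then the rank count $\rank\dot{Q}=|S|+\rank\dot{Q}_{+}$ forces $\mathbb{R}S\cap\mathbb{R}\dot{Q}_{+}=0$, which is what your last step actually requires. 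Simpler still, you can avoid real spans entirely: for $\sigma\in Q$ and $\dot{\alpha}\in\dot{\Delta}^{\#}$ one has $2(\sigma,\dot{\alpha})/(\dot{\alpha},\dot{\alpha})\in\mathbb{Z}$, so each reflection displaces $\sigma$ inside $\mathbb{Z}\dot{\Delta}^{\#}$, and induction on the length of $u$ gives $u\sigma-\sigma\in\mathbb{Z}\dot{\Delta}^{\#}\subseteq\dot{Q}_{+}$ directly---this integrality is exactly what is hidden in the paper's inclusion $w^{-1}|w\mu|-|w\mu|\in Q^{\#}$.
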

\begin{proof}
By~(\ref{wmudef}) we have
$w^{-1}|w\mu|\in\mathbb{Z}S$.
Therefore $P_S(w^{-1}|w\mu|)=w^{-1}|w\mu|$.  Since $w^{-1}|w\mu|-|w\mu|\in Q^{\#}$
we have $P_S(|w\mu|)=P_S(w^{-1}|w\mu|)=w^{-1}|w\mu|$ as required.\end{proof}

\subsubsection{}
\begin{cor}{YJmu0}
For any $\mu_0 \in\mathbb{Z}S$ the set  $Y_{\cJ}(\mu_0)$ lie in a finite union of 
affine lattice cones of dimension one. 
\end{cor}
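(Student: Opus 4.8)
The plan is to bound $\dot{P}(\supp(q_{\cJ}))$ directly from the Jantzen sum formula~(\ref{Janmu}) and to show that, once the isotropic shift $\mu_0$ is fixed, only one free parameter survives. Multiplying~(\ref{Janmu}) by $e^{-k\Lambda_0-\rho}$ and using $q_{\cJ}=e^{-k\Lambda_0}R\sum_{i\geq 1}\ch\cJ^i(k)$ gives
$$q_{\cJ}=\sum_{\mu\in U(k)}\ \sum_{r\geq 1}\ \sum_{\gamma\in\Delta^+\setminus\dot{\Delta}}(-1)^{(r+1)p(\gamma)}\dim\fg_{\gamma}\,\dot{K}(\mu-r\gamma)\,e^{-\mu},$$
so $\supp(q_{\cJ})\subset\{-\mu\}$ where $\dot{K}(\mu-r\gamma)\neq 0$ for some $r\geq1$ and $\gamma\in\Delta^+\setminus\dot{\Delta}$. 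Since $\dot{\fg}\neq\mathfrak{spo}(2n|2n+2)$ we have $\dot{W}'=\dot{W}^{\#}$, and the denominator expansion~(\ref{denomexp}) identifies $-\supp\dot{R}$ with the set of $w\dot{\rho}-\dot{\rho}-|w\mu'|$ for $w\in\dot{W}^{\#}$ and $\mu'\in\mathbb{Z}_{\geq 0}S$. Hence $\dot{K}(\mu-r\gamma)\neq0$ forces $\mu=\dot{\rho}-w\dot{\rho}+|w\mu'|+r\gamma$ for such $w,\mu'$.

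Next I would apply $\dot{P}$ and split into the isotropic and positive-length parts. Writing $\dot{\gamma}:=\dot{P}(\gamma)\in\dot{\Delta}\cup\{0\}$ and using $\dot{P}(|w\mu'|)=|w\mu'|$, one gets $\dot{P}(-\mu)=(w\dot{\rho}-\dot{\rho})-|w\mu'|-r\dot{\gamma}$, so $\eta\in\dot{Q}_+$ lies in $Y_{\cJ}(\mu_0)$ exactly when $\eta=-\mu_0-(w\dot{\rho}-\dot{\rho})+|w\mu'|+r\dot{\gamma}$ for some admissible data. Set $\sigma:=P_S(|w\mu'|)$; by~(\ref{PSwmu}) one has $|w\mu'|=w\sigma$ and $P_+(|w\mu'|)=w\sigma-\sigma$. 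Applying $P_S$ to the identity for $\eta$, together with $P_S(\eta)=0$ and $P_S(\mu_0)=\mu_0$, yields
$$\sigma=\mu_0+P_S(w\dot{\rho}-\dot{\rho})-r\,P_S(\dot{\gamma}),$$
so for each triple $(w,\dot{\gamma},r)$ the isotropic part $\sigma$, and therefore $\mu'$ by~(\ref{wmuprop})(ii), is uniquely determined. This is the crucial collapse of the infinitely many isotropic contributions into a single parameter.

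Finally I would substitute back into the $P_+$-component. Using $P_+(\eta)=\eta$, $P_+(\mu_0)=0$, and $w\sigma-\sigma=(w-1)\sigma$ one obtains
$$\eta=\Big(-P_+(w\dot{\rho}-\dot{\rho})+(w-1)\big(\mu_0+P_S(w\dot{\rho}-\dot{\rho})\big)\Big)+r\big(P_+(\dot{\gamma})-(w-1)P_S(\dot{\gamma})\big),$$
an affine line in $\eta$ parametrized by $r\in\mathbb{Z}_{\geq 0}$, i.e.\ a one-dimensional affine lattice cone of the form $c_{w,\dot{\gamma}}+\mathbb{Z}_{\geq 0}v_{w,\dot{\gamma}}$. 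Since $w$ runs over the finite group $\dot{W}^{\#}$ and $\dot{\gamma}$ over the finite set $\dot{\Delta}\cup\{0\}$, the set $Y_{\cJ}(\mu_0)$ is contained in the finite union of these cones, which is the assertion. The main obstacle is exactly the infinite family $\mu'\in\mathbb{Z}_{\geq 0}S$ produced by the denominator identity, which a priori could make $\dot{P}(\supp(q_{\cJ}))$ higher-dimensional; the resolution is that fixing $\mu_0$ rigidifies the $\mathbb{Z}S$-component and leaves the integer $r$ as the only remaining degree of freedom.
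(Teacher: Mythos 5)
Your proof is correct and follows essentially the same route as the paper's: bound $\dot{P}(\supp(q_{\cJ}))$ by combining the Jantzen sum formula~(\ref{Janmu}) with the denominator expansion~(\ref{denomexp}), then apply $P_S$ together with~(\ref{PSwmu}) to see that for each fixed pair $(w,\dot{\gamma})$ the isotropic contribution $|w\mu'|$ is pinned down by $\mu_0$ and $r$, so that $\eta$ traces out an affine line in the single parameter $r$, and finiteness of $\dot{W}^{\#}$ and $\dot{\Delta}\cup\{0\}$ finishes the argument. If anything, you are slightly more careful than the paper's own write-up, which silently drops the constant $\dot{\rho}-w\dot{\rho}$ when passing to its set $X$; retaining it, as you do, merely shifts the base point $c_{w,\dot{\gamma}}$ of each one-dimensional cone and does not affect the conclusion.
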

 \begin{proof}
By~(\ref{Janmu}), if $(k\Lambda_0+\rho-\nu)\in \supp\bigl(Re^{\rho}\sum\limits_{i=1}^\infty \ch \cJ^i(k)\bigr)$, then
 $\nu=r\gamma+\alpha$ for some 
$ \gamma\in {\Delta}^+\setminus \dot{\Delta}$, $r\in\mathbb{Z}_{\geq 1}$ and
$\alpha\in\supp\dot{R}$.  Therefore
$$\dot{P}(\supp(q_{\cJ}))\subset  \{-r\dot{\gamma}-\alpha|\ \ 
 r\in\mathbb{Z}_{\geq 1},\ \dot{\gamma}\in \dot{\Delta}\cup\{0\},\ \ \alpha\in\supp\dot{R}\}.$$
  Combining~(\ref{Janmu}) and~(\ref{denomexp}) we conclude that
 each element in  $\dot{P}(\supp(q_{\cJ}))$ can be written as 
$-(r\dot{\gamma}-w\dot{\rho}+\dot{\rho}+|w\mu|)$ for some
$\dot{\gamma}\in\dot{\Delta}\cup\{0\}$, $w\in\dot{W}^{\#}$ and $\mu\in\mathbb{Z}_{\geq 0}S$. 
Fix $\mu_0 \in\mathbb{Z}S$. Recall that
 $$Y_{\cJ}(\mu_0):=\{\eta\in\dot{Q}_+|\ -(\eta+\mu_0)\in \dot{P}(\supp(q_{\cJ}))\}.$$

Since $\dot{\Delta}$ and $w\in\dot{W}^{\#}$ are finite, it is enough to check that 
for each 
$\dot{\gamma}\in\dot{\Delta}\cup\{0\}$ and $w\in\dot{W}^{\#}$ the set
$$X:=\{\eta\in \dot{Q}_+|\ \ \eta+\mu_0= 
r\dot{\gamma}+|w\mu| \text{ for some } r\in\mathbb{Z}_{\geq 1}, \mu\in  \mathbb{Z}_{\geq 0}S\}$$
lies in a one-dimensional affine lattice cone.  
Take $\eta\in X$. There exists $r\in\mathbb{Z}_{\geq 1}$ and $\mu\in  \mathbb{Z}_{\geq 0}S$ such that 
$$\mu_0=P_S(r\dot{\gamma}+|w\mu| ),\ \ \eta=r\dot{\gamma}+|w\mu| -\mu_0.$$
By~(\ref{PSwmu}) we have $\mu_0=rP_S(\dot{\gamma})-w^{-1}|w\mu|$ so 
$$\eta=r\dot{\gamma}+|w\mu| -\mu_0=r\dot{\gamma}+w\mu_0-r wP_S(\dot{\gamma})-\mu_0=\zeta_0+r\zeta$$
for $\zeta_0:=w\mu_0-\mu_0$ and $\zeta:=\dot{\gamma}- wP_S(\dot{\gamma})$.
We conclude that $X=\zeta_0+\mathbb{Z}_{\geq 1}\zeta$, so $X$ is a  affine lattice cone 
of zero dimension if $\dot{\gamma}=0$ and of dimension one if $\dot{\gamma}\not=0$.
\end{proof}

 \subsubsection{}
 \begin{lem}{lemreg}
For $\mu_0=\sum\limits_{\beta\in S} m_{\beta}\beta\in\mathbb{Z}S$ we have
 $$(k+h^{\vee})\eta\in Y_L(\mu_0)\ \ \Longleftrightarrow\ \ \sum\limits_{w\in C(\eta)} (-1)^{p(\mu)}\sgn (w)\not=0$$
 where $$C(\eta):=\{y\in  \Stab_{W^{\#}} (k\Lambda_0+\rho-\mu_0)|\ t_{-\eta}y \beta\in\Delta^+\ 
 \text{ if and only if }m_{\beta}\geq 0\ \text{ and } t_{-\eta}y \not\in \dot{W} \}.
 $$
  \end{lem}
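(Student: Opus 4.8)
The plan is to reduce the set-membership $(k+h^{\vee})\eta\in Y_L(\mu_0)$ to the computation of a single coefficient in $q_L$, and then to match the contributing terms bijectively with the elements of $C(\eta)$. Throughout write $\lambda:=k\Lambda_0+\rho$ and $\Lambda:=\lambda-\mu_0$. First I would observe that every exponent occurring in $e^{\lambda}q_L=\cF_{W^{\#}\setminus\dot{W}^{\#}}(e^{\lambda}/\prod_{\beta\in S}(1+e^{-\beta}))$ has the shape $u\lambda-|u\mu|$ with the same norm as $\lambda$: since $S$ is isotropic and mutually orthogonal and $(\lambda,\beta)=0$ for $\beta\in S$, one has $\||u\mu|\|^2=0$ and $(u\lambda,|u\mu|)=(\lambda,P_S(|u\mu|))=0$. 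Hence all weights of $\supp(q_L)$ lie in the set where $\|\lambda-\alpha\|^2=\|\lambda\|^2$, on which $\dot{P}$ is injective (as $k+h^{\vee}\neq0$). Consequently $-((k+h^{\vee})\eta+\mu_0)\in\dot{P}(\supp q_L)$ holds if and only if the coefficient in $q_L$ of the unique weight with that $\dot{P}$-image is nonzero. Comparing level, norm, and $\dot{P}$-image, I would check this weight equals $t_{-\eta}\Lambda-\lambda$, so it suffices to compute the coefficient of $e^{t_{-\eta}\Lambda}$ in $e^{\lambda}q_L$ (membership being equivalent to this once $(k+h^{\vee})\eta\in\dot{Q}_+$).

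Next, using $W^{\#}=\dot{W}^{\#}\rtimes T$ and $\sum_{\nu\in Q'}t_{\nu}\cF_{\dot{W}^{\#}}=\cF_{W^{\#}}$ together with $\sgn(t_{\nu})=\det(t_{\nu}|_{\fh^*})=1$, I expand each summand of $\cF_{W^{\#}\setminus\dot{W}^{\#}}$ by~\ref{expansionrule} into $\sum_{\mu\in\mathbb{Z}_{\geq 0}S}\sgn(u)(-1)^{p(\mu)}e^{u\lambda-|u\mu|}$. By~(\ref{PSwmu}) one has $|u\mu|=u\mu'$ with $\mu':=P_S(|u\mu|)\in\mathbb{Z}S$, so a pair $(u,\mu)$ contributes to $e^{t_{-\eta}\Lambda}$ exactly when $u(\lambda-\mu')=t_{-\eta}(\lambda-\mu_0)$. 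Putting $y:=t_{\eta}u\in W^{\#}$, this reads $y(\lambda-\mu')=\lambda-\mu_0$.

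The heart of the proof is the claim that such a relation forces $\mu'=\mu_0$, whence $y\in\Stab_{W^{\#}}\Lambda$ and $u=t_{-\eta}y$. Writing $y=t_{\nu_y}w_y$ and applying $\dot{P}$ gives $w_y(\dot{\rho}-\mu')+(k+h^{\vee})\nu_y=\dot{\rho}-\mu_0$; applying $P_S$ and using that $w_y\in\dot{W}^{\#}$ moves $\dot{\rho}$ and any element of $\mathbb{Z}S$ by an element of $\mathbb{Z}\dot{\Delta}^{\#}\subset\dot{Q}_+$ (so $P_S\circ w_y=P_S$ there) and that $P_S(\nu_y)=0$, I obtain $\mu'=\mu_0$. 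Conversely, given $y\in\Stab_{W^{\#}}\Lambda$, the requirement $P_S(|u\mu|)=\mu_0=\sum_{\beta}m_{\beta}\beta$ shows, via the explicit formula for $|u\mu|$, that a compatible $\mu\in\mathbb{Z}_{\geq 0}S$ exists and is unique, and that it exists precisely when $u\beta\in\Delta^+\iff m_{\beta}\geq 0$ for every $\beta\in S$ — that is, exactly when $y\in C(\eta)$ (the condition $t_{-\eta}y\notin\dot{W}$ recording $u\in W^{\#}\setminus\dot{W}^{\#}$). Since $\sgn(u)=\sgn(t_{-\eta}y)=\sgn(y)$, the coefficient of $e^{t_{-\eta}\Lambda}$ equals $\sum_{y\in C(\eta)}(-1)^{p(\mu)}\sgn(y)$, and combined with the first paragraph this gives the stated equivalence.

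The main obstacle is the third step: establishing $\mu'=\mu_0$ and pinning down the unique $\mu$ with its parity $p(\mu)$. This rests on the compatibility of $\dot{W}^{\#}$ with the decomposition $\dot{Q}=\mathbb{Z}S\oplus\dot{Q}_+$ and on careful bookkeeping of the expansion signs; the preliminary reduction to one coefficient depends on invoking the injectivity of $\dot{P}$ on $U_k$ cleanly, so that no spurious cancellations between distinct full weights occur.
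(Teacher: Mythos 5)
Your proposal is correct and takes essentially the same route as the paper's proof: both reduce membership in $Y_L(\mu_0)$ to the nonvanishing of the single coefficient of $e^{t_{-\eta}(k\Lambda_0+\rho-\mu_0)}$ in $e^{k\Lambda_0+\rho}q_L$, expand over pairs $(u,\mu)$ with $u\in W^{\#}\setminus\dot{W}^{\#}$ and $\mu\in\mathbb{Z}_{\geq 0}S$, apply $P_S$ together with~(\ref{PSwmu}) to force $P_S(|u\mu|)=\mu_0$ (hence $|u\mu|=u\mu_0$, pinning down the unique compatible $\mu$ and placing $t_{\eta}u$ in $\Stab_{W^{\#}}(k\Lambda_0+\rho-\mu_0)$), and identify the surviving index set with $C(\eta)$. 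The only difference is expository: you spell out the norm computation and the injectivity of $\dot{P}$ on the norm-preserving set, which the paper uses implicitly from its notation section.
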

 \begin{proof}
The coefficient of $e^{-\nu}$ in $q_L$ is equal to 
 $$a_{\nu}:=\sum\limits_{(w,\mu)\in B(\nu)} (-1)^{p(\mu)}\sgn (w)$$
for $B(\nu):=\{(w,\mu)| \ w\in W^{\#}\setminus \dot{W},\ \mu\in\mathbb{Z}_{\geq 0}S,\  
 |w\mu|+(k\Lambda_0+\rho)-w(k\Lambda_0+\rho)=\nu\}$.

For $\nu:=(k\Lambda_0+\rho)-t_{-\eta}(k\Lambda_0+\rho-\mu_0)$ we have 
$$B=\{(w,\mu)| \ w\in W^{\#}\setminus \dot{W},\ \mu\in\mathbb{Z}_{\geq 0}S,\  w(k\Lambda_0+\rho)-|w\mu|=t_{-\eta}
(k\Lambda_0+\rho-\mu_0)\}.$$

One has
$$\dot{P}(\nu)=\dot{P}\bigl((k\Lambda_0+\rho-\mu_0)-t_{-\eta}(k\Lambda_0+\rho-\mu_0)\bigr)+\mu_0=
-(k+h^{\vee})\eta+\mu_0.$$
In particular, $P_S(\nu)=\mu_0$.
Take any  $(w,\mu)\in B$. One has 
$$\mu_0=P_S(\nu)=P_S\bigl((k\Lambda_0+\rho)-w(k\Lambda_0+\rho)+|w\mu|\bigr)=P_S(|w\mu|).$$
Therefore $P_S(|w\mu|)=\mu_0$ and, by~(\ref{PSwmu}), $|w\mu|=w\mu_0$, which implies  
$\mu:=\sum\limits_{\beta\in S} m'_{\beta}\beta\in\mathbb{Z}S$ for $m'_{\beta}:=m_{\beta}$
if $m_{\beta}\geq 0$ and $m'_{\beta}=-m_{\beta}-1$ if $m_{\beta}<0$.  Note that $\mu\in\mathbb{Z}_{\geq 0}S$.
For $\mu$ as above the pair $(w,\mu)$ lies in $B(\nu)$ if and only if $|w\mu|=w\mu_0$
and
$$t_{-\eta}(k\Lambda_0+\rho-\mu_0)=w(k\Lambda_0+\rho)-|w\mu|=w(k\Lambda_0+\rho-\mu_0)$$
that is $t_{\eta}w\in \Stab_{W^{\#}} (k\Lambda_0+\rho-\mu_0)$. The condition $|w\mu|=w\mu_0$ means that
$w\beta\in\Delta^+$ if and only if $m_{\beta}\geq 0$.  Thus
$$a_{\nu}:= (-1)^{p(\mu)} \sum\limits_{y\in C(\eta)}\sgn (y).$$
Since $\dot{P}(\nu)=-(k+h^{\vee})\eta+\mu_0$ we have $(k+h^{\vee})\eta\in Y_L(\mu_0)$ if and only if $a_{\nu}\not=0$.
\end{proof}

 \subsubsection{}
 \begin{cor}{coreta}
Let $\mu_0=\sum\limits_{\beta\in S} m_{\beta}\beta\in\mathbb{Z}S$  and
 $\eta\in Q'\setminus\{0\}$ be such  
 $$(\eta,\beta)\geq 0\ \Longleftrightarrow \ m_{\beta}\geq 0.$$
 Then $\Id\in C(\eta)$. Moreover,   $(k+h^{\vee})\eta\in Y_L(\mu_0)$ if $(k\Lambda_0+\rho-\mu_0)$ is $W^{\#}$-regular.
  \end{cor}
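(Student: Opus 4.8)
The plan is to establish the two assertions separately: the membership $\Id\in C(\eta)$ by directly checking the three conditions defining $C(\eta)$ for the element $y=\Id$, and then the inclusion $(k+h^{\vee})\eta\in Y_L(\mu_0)$ by feeding this into the coefficient criterion of~\Lem{lemreg} together with the $W^{\#}$-regularity hypothesis.

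For $\Id\in C(\eta)$, observe first that $\Id$ trivially lies in $\Stab_{W^{\#}}(k\Lambda_0+\rho-\mu_0)$, so it remains to verify that $t_{-\eta}\beta\in\Delta^+$ exactly when $m_\beta\geq 0$, and that $t_{-\eta}\notin\dot{W}$. For the first point I would compute $t_{-\eta}\beta$ from~(\ref{tnu}): since each $\beta\in S\subset\dot{\Delta}$ satisfies $(\beta,\delta)=0$, the formula collapses to $t_{-\eta}\beta=\beta+(\eta,\beta)\delta$. As $\beta\in\dot{\Sigma}\subset\dot{\Delta}^+$, the affine root $\beta+m\delta$ is positive precisely when $m\geq 0$; hence $t_{-\eta}\beta\in\Delta^+$ if and only if $(\eta,\beta)\geq 0$, which by the hypothesis $(\eta,\beta)\geq 0\Leftrightarrow m_\beta\geq 0$ is exactly the required condition. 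For the second point, since $\eta\in Q'\setminus\{0\}$ the translation $t_{-\eta}$ moves $\Lambda_0$: by~(\ref{tnu}) one has $t_{-\eta}\Lambda_0=\Lambda_0-\eta-\tfrac{1}{2}(\eta,\eta)\delta\neq\Lambda_0$ (the $\dot{\fh}^*$-component $-\eta$ is nonzero), whereas every element of $\dot{W}$ fixes $\Lambda_0$; thus $t_{-\eta}\notin\dot{W}$. This gives $\Id\in C(\eta)$.

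For the second assertion I would invoke~\Lem{lemreg}, by which $(k+h^{\vee})\eta\in Y_L(\mu_0)$ if and only if $\sum_{w\in C(\eta)}(-1)^{p(\mu)}\sgn(w)\neq 0$; here $\mu$, and hence the sign $(-1)^{p(\mu)}$, depends only on $\mu_0$, so this is equivalent to $\sum_{w\in C(\eta)}\sgn(w)\neq 0$. If $k\Lambda_0+\rho-\mu_0$ is $W^{\#}$-regular, then $\Stab_{W^{\#}}(k\Lambda_0+\rho-\mu_0)=\{\Id\}$, and since $C(\eta)$ is by definition contained in this stabilizer while containing $\Id$ by the first part, we conclude $C(\eta)=\{\Id\}$. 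Then $\sum_{w\in C(\eta)}\sgn(w)=1\neq 0$, so $(k+h^{\vee})\eta\in Y_L(\mu_0)$.

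The argument is largely bookkeeping, and I expect the only point requiring care to be the identification of when $\beta+(\eta,\beta)\delta$ is a positive root: this rests on $\beta$ being a positive (simple, isotropic) finite root, so that positivity of $\beta+m\delta$ is governed solely by the sign of $m$. Once this is settled, the $W^{\#}$-regularity hypothesis does the remaining work by forcing the relevant stabilizer, and therefore $C(\eta)$, to be the trivial group.
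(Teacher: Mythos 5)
Your proof is correct and follows exactly the route the paper intends: the corollary is stated as an immediate consequence of Lemma~\ref{lemreg}, and your write-up supplies precisely the omitted bookkeeping — the computation $t_{-\eta}\beta=\beta+(\eta,\beta)\delta$ from~(\ref{tnu}) using $(\beta,\delta)=0$ and $\beta\in\dot\Sigma\subset\dot\Delta^+$, the observation that $t_{-\eta}\notin\dot W$ since $\eta\neq 0$, and the fact that $W^{\#}$-regularity forces $\Stab_{W^{\#}}(k\Lambda_0+\rho-\mu_0)=\{\Id\}$, hence $C(\eta)=\{\Id\}$ and the sum in Lemma~\ref{lemreg} equals $\pm 1$. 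No gaps.
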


%

\subsubsection{}
 \begin{cor}{correg2}
 If  $\mathfrak{psl}(n|n)$ and  $(k\Lambda_0+\rho-\mu_0)$ is $W^{\#}$-regular
 for some  $\mu_0\in\mathbb{Z}S$, then $Y_L(\mu_0)\cup\{0\}$
 contains a lattice cone of the same rank as $\dot{\fg}^{\#}$.
  \end{cor}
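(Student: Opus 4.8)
The plan is to deduce the statement from \Cor{coreta} together with a full-dimensionality argument for a suitable orthant in $Q'$. Since $(k\Lambda_0+\rho-\mu_0)$ is $W^{\#}$-regular, \Cor{coreta} tells us that $(k+h^{\vee})\eta\in Y_L(\mu_0)$ for \emph{every} $\eta\in Q'\setminus\{0\}$ satisfying the sign condition $(\eta,\beta)\geq 0\Leftrightarrow m_{\beta}\geq 0$ for all $\beta\in S$, where $\mu_0=\sum_{\beta\in S}m_{\beta}\beta$. Hence it suffices to show that the set
$$C:=\{\eta\in Q'\ |\ (\eta,\beta)\geq 0\Leftrightarrow m_{\beta}\geq 0\ \text{ for all }\beta\in S\}$$
contains a lattice cone of rank $\ell=\operatorname{rank}\dot{\fg}^{\#}$: multiplying such a cone by the nonzero integer $k+h^{\vee}$ (recall $h^{\vee}=0$ here) again yields a lattice cone of rank $\ell$, all of whose nonzero points lie in $Y_L(\mu_0)$, so that $Y_L(\mu_0)\cup\{0\}$ contains it.

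The key structural input, special to $\mathfrak{psl}(n|n)$, is that the defect equals $\ell$. Indeed $\dot{\Delta}^{\#}$ is of type $A_{n-1}$ (the even roots $\vareps_i-\vareps_j$ have positive square length, the $\delta_i-\delta_j$ negative), so $\ell=n-1$ and $Q'\cong\mathbb{Z}^{n-1}$ is the $A_{n-1}$ root lattice, spanned by $\vareps_j-\vareps_n$, $1\le j\le n-1$. On the other hand, in $\mathfrak{psl}(n|n)$ one has the relation $\sum_{i=1}^{n}(\vareps_i-\delta_i)=0$ (the supertrace functional vanishes), so a maximal orthogonal isotropic set has only $n-1$ elements; I would take $S=\{\vareps_i-\delta_i\}_{i=1}^{n-1}$. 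Since every $\eta\in Q'$ is orthogonal to all the $\delta_i$, one computes $(\vareps_j-\vareps_n,\vareps_i-\delta_i)=\delta_{ij}$ for $1\le i,j\le n-1$; thus the functionals $\{(\,\cdot\,,\beta)\}_{\beta\in S}$ and the basis $\{\vareps_j-\vareps_n\}$ of $Q'$ are dual bases. In particular the pairing $Q'\times\mathbb{Z}S\to\mathbb{Z}$ is nondegenerate, and the $\ell$ conditions defining $C$ are independent.

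Writing $\eta=\sum_{i=1}^{n-1}a_i(\vareps_i-\vareps_n)$, the defining condition of $C$ reads $\epsilon_i a_i\geq 0$, where $\epsilon_i=+1$ if $m_{\beta_i}\ge 0$ and $\epsilon_i=-1$ otherwise, so $C$ is a full-dimensional orthant of $Q'$. To account for the strict inequality hidden in the biconditional when $m_{\beta_i}<0$, I would exhibit the explicit lattice cone
$$\lambda+\sum_{i=1}^{n-1}\mathbb{Z}_{\geq 0}\,v_i,\qquad v_i:=\epsilon_i(\vareps_i-\vareps_n),\quad \lambda:=\sum_{i:\ \epsilon_i=-1}\bigl(-(\vareps_i-\vareps_n)\bigr),$$
which has rank $n-1=\ell$ and lies in $C$: at a point $\lambda+\sum_i n_i v_i$ the $i$-th coordinate equals $n_i\ge 0$ if $\epsilon_i=+1$ and $-1-n_i\le -1<0$ if $\epsilon_i=-1$, matching exactly the sign prescribed by $m_{\beta_i}$. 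Scaling by $k+h^{\vee}$ and adjoining $0$ then completes the argument.

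The step I expect to be the main obstacle is the structural one: recognizing that passing from $\fsl(n|n)$ to $\mathfrak{psl}(n|n)$ drops the defect from $n$ to $n-1$ via the relation $\sum_i(\vareps_i-\delta_i)=0$, so that $|S|$ matches $\operatorname{rank}\dot{\fg}^{\#}$ and the pairing of $S$ with $Q'$ becomes nondegenerate. It is precisely this coincidence that upgrades $C$ from a possibly lower-dimensional slice (as would occur if a nontrivial relation $\sum_{\beta}c_{\beta}(\,\cdot\,,\beta)=0$ held on $Q'$) to a genuine full-rank orthant; the remaining cone construction is routine.
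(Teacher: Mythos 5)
You have proved the one case that the corollary excludes. Despite the garbled wording of the statement, the intended hypothesis is $\dot{\fg}\neq\mathfrak{psl}(n|n)$, not $\dot{\fg}=\mathfrak{psl}(n|n)$: the paper's own proof reduces the claim, via \Cor{coreta} (as you do), to the linear independence of the projections $\{\beta^{\#}\}_{\beta\in S}$ in $\mathbb{Q}\dot{\Delta}^{\#}$, and then verifies this independence by inspection for type I ($\fosp(2|2n)$, $\fsl(m|n)$ with $m>n$) and by a positive/negative definiteness argument for type II; accordingly, Section~\ref{thm3iii} invokes the corollary with the words ``for $\dot{\fg}\neq\mathfrak{psl}(n|n)$'' and treats $\mathfrak{psl}(n|n)$ there separately, by a direct application of \Cor{coreta}. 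The exclusion is forced by exactly the relation you flag as your key structural input: $\sum_{i=1}^n(\vareps_i-\delta_i)=0$ makes the maximal isotropic set $S$ of $\mathfrak{psl}(n|n)$ linearly dependent, so $\{\beta^{\#}\}_{\beta\in S}$ cannot be independent (note that the paper's type II argument tacitly needs independence of $S$ itself in order to produce a nonzero $\nu\in\mathbb{Q}S$ from a dependence among the $\beta^{\#}$). Your proposal therefore contains none of the actual content of the corollary, namely the uniform argument covering every $\dot{\fg}$ other than $\mathfrak{psl}(n|n)$.

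Moreover, even as a treatment of $\mathfrak{psl}(n|n)$ itself, your argument has a genuine gap: you shrink $S$ to $\{\vareps_i-\delta_i\}_{i=1}^{n-1}$, calling it a maximal orthogonal isotropic set. It is not maximal under inclusion (the root $\vareps_n-\delta_n$ is isotropic and orthogonal to all its elements), and inclusion-maximality is what the machinery requires: $S$ --- for $\mathfrak{psl}(n|n)$ the $n$-element set $\{\vareps_i-\delta_i\}_{i=1}^{n}$, precisely the choice made in Section~\ref{thm3iii} --- is the set appearing in the denominator identity and in the character formula (\ref{KWformulashort}), hence in the definitions of $q_L$ and $Y_L(\mu_0)$ and in the proof of \Cor{coreta}. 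With the correct $S$, \Cor{coreta} imposes a sign condition at \emph{every} $\beta_i$, $1\leq i\leq n$, whereas your cone controls $a_i=(\eta,\beta_i)$ only for $i\leq n-1$ and leaves $a_n=-\sum_{i<n}a_i$ free; the condition at $\beta_n$ then fails in general (for instance, if all your $m_i$, $i<n$, are nonnegative, so that $m_n=0$, every nonzero point of your cone has $a_n<0$, violating $(\eta,\beta_n)\geq 0$). Hence the cone you construct need not lie in $Y_L(\mu_0)$. The repair --- and, in substance, what the paper does in Section~\ref{thm3iii} --- is to keep all $n$ elements of $S$ and exploit the non-uniqueness of the expression $\mu_0=\sum_i m_i\beta_i$ (all $m_i$ can be shifted by a common integer, since $\sum_i\beta_i=0$): $W^{\#}$-regularity forces the $m_i$ to be pairwise distinct, so a representation with mixed signs exists, and for such a representation the full orthant in the $A_{n-1}$ root lattice $Q'$ does contain an $(n-1)$-dimensional lattice cone.
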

\begin{proof}
Write $\mu_0=\sum\limits_{\beta\in S} m_{\beta}\beta$.
By~\Cor{coreta}, $Y_L(\mu_0)\cup\{0\}$ contains
$(k+h^{\vee})N$, where 
$$N:=\{\nu\in Q'||\ (\nu,\beta)\geq 0 \text{ if and only if } 
m_{\beta}\geq 0\}.$$

The restriction of $(-,-)$ to $\mathbb{Q}\dot{\Delta}^{\#}$  is positively definite.
We write $\alpha\in \mathbb{Q}\dot{\Delta}$ in the form $\alpha=\alpha^{\#}+\alpha^{\perp}$, where 
$\alpha\in \mathbb{Q}\dot{\Delta}^{\#}$ and $(\alpha^{\perp},\dot{\Delta}^{\#})=0$.
It sufficies  to verify that   $\{\beta^{\#}\}_{\beta\in S}$ are linearly independent.

 For $\dot{\fg}$ type I ($\fosp(2|2n)$  and $\fsl(m|n)$ with $m>n$) this  can be easily checked. 
 Let us verify the assertion for  $\dot{\fg}$  of type II. 
Assume that $\{\beta^{\#}\}_{\beta\in S}$ are linearly dependent. Then 
there exists $\nu=\sum\limits_{\beta\in S} a_{\beta}\beta\in\mathbb{Q}S$ such that $\nu\not=0$ and  $\nu^{\#}=0$.
Since $\nu\in\mathbb{Q}S$ we have
$(\nu,\nu)=0$, so   $\nu^{\#}=0$ forces $(\nu^{\perp},\nu^{\perp})=0$. 
Since $\dot{\fg}$ is of type II,  we have $\nu^{\perp}\in\mathbb{Q}\dot{\Delta}'$, where 
$\dot{\Delta}_0=\dot{\Delta}^{\#}\coprod\dot{\Delta}'$ and the  restriction of $(-,-)$ to $\mathbb{Q}\dot{\Delta}^{\#}$  is negatively definite. Therefore
$(\nu^{\perp},\nu^{\perp})=0$ forces
$\nu^{\perp}=0$. Hence $\nu=0$, a contardiction. This completes the proof.
\end{proof}

\subsubsection{}
 \begin{cor}{corbeta2}   If $\Stab_{W^{\#}}(k\Lambda_0+\rho)=\Stab_{\dot{W}^{\#}}\dot{\rho}$), then 
 $Y_L(0)\cup\{0\}$ contains $(k+h^{\vee})\eta$ for any  $\eta\in Q'$ satisfying the property
$$(\eta,\beta')\geq 0 \ \text{ for all } \beta'\in  (\dot{W}_{\rho}S\cap \Delta^+),\ \ (\eta,\beta')\leq 0  \ \text{ for all } 
\beta'\in  (\dot{W}_{\rho}S\cap \Delta^-).$$
\end{cor}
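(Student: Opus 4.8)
The plan is to deduce the statement from \Lem{lemreg} applied with $\mu_0=0$. If $\eta=0$, then $(k+h^{\vee})\eta=0\in\{0\}$ and there is nothing to prove, so I would assume $\eta\neq 0$. For $\mu_0=0$ every integer $m_{\beta}$ vanishes, hence $(-1)^{p(\mu)}=1$, and by hypothesis $\Stab_{W^{\#}}(k\Lambda_0+\rho-\mu_0)=\Stab_{W^{\#}}(k\Lambda_0+\rho)=\dot{W}_{\rho}\subset\dot{W}^{\#}$. Thus \Lem{lemreg} reduces the claim $(k+h^{\vee})\eta\in Y_L(0)$ to showing
$$\sum_{y\in C(\eta)}\sgn(y)\neq 0,\qquad
C(\eta)=\{y\in\dot{W}_{\rho}\mid t_{-\eta}y\beta\in\Delta^+\ \text{for all}\ \beta\in S,\ t_{-\eta}y\notin\dot{W}\}.$$

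The key step will be to identify $C(\eta)$ explicitly. Since every root $\gamma$ satisfies $(\gamma,\delta)=0$, formula (\ref{tnu}) gives $t_{-\eta}(\gamma)=\gamma+(\gamma,\eta)\delta$; in particular, for $y\in\dot{W}_{\rho}$ and $\beta\in S$ the finite root $y\beta$ is sent to $t_{-\eta}(y\beta)=y\beta+(y\beta,\eta)\delta$, which lies in $\Delta^+$ exactly when $(y\beta,\eta)>0$, or $(y\beta,\eta)=0$ and $y\beta\in\dot{\Delta}^+$. I would then feed in the two inequalities imposed on $\eta$: if $y\beta\in\Delta^+$ then $(y\beta,\eta)\geq 0$ forces $t_{-\eta}(y\beta)\in\Delta^+$, whereas if $y\beta\in\Delta^-$ then $(y\beta,\eta)\leq 0$ forces $t_{-\eta}(y\beta)\in\Delta^-$ (in each case the degenerate value $(y\beta,\eta)=0$ leaves $t_{-\eta}(y\beta)=y\beta$, which already has the correct sign). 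Consequently $t_{-\eta}y\beta\in\Delta^+$ for all $\beta\in S$ if and only if $yS\subset\Delta^+$. Finally, since $\eta\neq 0$ the translation $t_{-\eta}$ is nontrivial, so $t_{-\eta}y\notin\dot{W}$ is automatic; hence $C(\eta)=\{y\in\dot{W}_{\rho}\mid yS\subset\Delta^+\}$.

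With this description, and using that each $y\beta$ is a finite root (so that affine- and finite-positivity coincide), the sum becomes $\sum_{y\in\Stab_{\dot{W}^{\#}}\dot{\rho},\ yS\subset\dot{\Delta}^+}\sgn(y)$, which is exactly the left-hand side of (\ref{denomsgny}). Under the running hypotheses $\dot{\fg}\neq\mathfrak{spo}(2n|2n+2)$ and $(\alpha,\alpha)\geq 0$ for all $\alpha\in\dot{\Sigma}$, that identity gives the value $1$, so the sum is nonzero and $(k+h^{\vee})\eta\in Y_L(0)$, as required.

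I expect the main obstacle to be the bookkeeping in the middle step: one must match the defining inequalities on $\eta$ with the correct sign conventions for $\dot{W}_{\rho}S\cap\Delta^{\pm}$ and treat the boundary case $(y\beta,\eta)=0$ carefully, so that $C(\eta)$ collapses to precisely $\{y\in\dot{W}_{\rho}\mid yS\subset\Delta^+\}$ and neither a larger nor a smaller set. Once this is pinned down, the appeal to the stabilizer identity (\ref{denomsgny}) is immediate.
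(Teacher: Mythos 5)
Your proposal is correct and takes essentially the same route as the paper's proof: apply \Lem{lemreg} with $\mu_0=0$, use the stabilizer hypothesis together with the sign conditions on $\eta$ to collapse $C(\eta)$ to $\{y\in\dot{W}_{\rho}\mid yS\subset\Delta^+\}$, and conclude via the identity~(\ref{denomsgny}). Your explicit handling of the boundary case $(y\beta,\eta)=0$, of $\eta\neq 0$, and of the condition $t_{-\eta}y\notin\dot{W}$ merely spells out details the paper leaves implicit.
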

\begin{proof}
For $\mu_0=0$ we have 
$$C(\eta)=\{y\in  \Stab_{W^{\#}} (k\Lambda_0+\rho)|\ t_{-\eta}y \beta\in\Delta^+\ 
 \text{ and } t_{-\eta}y \not\in \dot{W} \}=\{y\in \dot{W}_{\rho}|\ t_{-\eta}y \beta\in\Delta^+\}.
 $$
Using the assumption on $\eta$ we obtain
$C(\eta)=\{y\in \dot{W}_{\rho}|\ y \beta\in\Delta^+\}$. By~(\ref{denomsgny}) 
$$\sum\limits_{y\in C(\eta)} \sgn(y)=1.$$
By~\Lem{lemreg}, $(k+h^{\vee})\eta\in Y_L(0)\cup\{0\}$.
\end{proof}

\subsection{Proof of~\Thm{thm3}  (ii), (iii) }\label{ABD}
In this section we will complete the proof of~\Thm{thm3} (ii), (iii).
Recall that $\dot{\fg}$ is of non-zero defect and $k\in\mathbb{Z}_{>0}$ is such that $k+h^{\vee}\not=0$. 
 We  will choose $\Sigma$ such that
 $(\alpha,\alpha)\geq 0$ for all $\alpha\in\Sigma$, so 
 $(k\Lambda_0+\rho,\alpha)\geq 0$ for $\alpha\in \Sigma(\dot{\fg}_0)$.
 The group
 $\dot{W}_{\rho}=\Stab_{\dot{W}^{\#}}\dot{\rho}$ is 
generated by the reflections $s_{\alpha}$ with $(\dot{\rho},\alpha)=0$.
 
Take $k$ such that $(k\Lambda_0+\rho,\alpha_0)>0$
 (this holds for all $k\in\mathbb{Z}_{\geq 0}$ if
$\alpha_0\in\Sigma$). Since
$\Stab_{W^{\#}}(k\Lambda_0+\rho)$ is generated by  $s_{\alpha}$ with
$\alpha\in \Sigma({\fg}^{\#})$ satisfying $(\rho,\alpha)=0$, we have
$\Stab_{W^{\#}}(k\Lambda_0+\rho)=\dot{W}_{\rho}$.

 \subsubsection{Case $G(3)$, $F(4)$}
We take $\dot{\Sigma}:=\{\vareps_1;
\vareps_2-\delta_1;-\vareps_1+\delta_1\}$ for $G(3)$ and
$$\dot{\Sigma}:=\{\vareps_1-\vareps_2, \frac{1}{2}(\delta_1-\vareps_1+\vareps_2-\vareps_3),
\frac{1}{2}(-\delta_1+\vareps_1+\vareps_2-\vareps_3),\vareps_3\}
$$
for $F(4)$. One has $\alpha_0\in\Sigma$ and  $\dot{W}_{\rho}=\{\Id,s_{\alpha}\}$
where $\alpha:=\vareps_2-\vareps_1$ for $G(3)$ and $\alpha:=\vareps_2-\vareps_3$ for $F(4)$.
Take
 $S=\{\beta\}$ for any isotropic $\beta\in\dot{\Sigma}$.
Then $s_{\alpha}\beta\in\Delta^-$
$$N:=\{\nu\in Q'|\ (\nu,\beta)\geq 0\geq (\nu, s_{\alpha}\beta)\}.$$
  satisfies the assumption of~\Cor{corbeta2}. Note that $N$ contains a lattice cone of the maximal rank in
  $Q'$ which has  rank $2$ for $G(3)$ and rank $3$ for $F(4)$.

\subsubsection{Case $\fsl(n+1|n)$ for $n>1$ and $k>0$}\label{sln+1n}
We fix
$$\dot{\Sigma}:=
\{\vareps_1-\delta_1,\delta_1-\vareps_2,\vareps_2-\vareps_3, \vareps_3-\delta_2,\delta_2-\vareps_4,\vareps_4-\delta_3,
\ldots, \delta_{n-1}-\vareps_{n},\vareps_{n+1}-\delta_n\}$$
and $S=\{\vareps_1-\delta_1\}\cup\{\vareps_{i+1}-\delta_i\}_{i=2}^n$. In this case $\alpha_0\not\in\Sigma$ and 
$(k\Lambda_0+\rho,\alpha_0)>0$ for $k>0$. 
 The group $\dot{W}_{\rho}$ is the  product of $\{\Id, s_{\vareps_1-\vareps_2}\}$ and the  group of permutations of
 $\vareps_3,\ldots,\vareps_{n+1}$.  One has 
 $$\begin{array}{l}
 \dot{W}_{\rho}S\cap \Delta^+=\{\vareps_1-\delta_1\}\cup \{\vareps_i-\delta_j|\  2\leq j\leq n,\  3\leq i\leq j+1\},\\
 \dot{W}_{\rho}S\cap \Delta^-=\{\vareps_2-\delta_1\}\cup \{\vareps_i-\delta_j|\  2\leq j\leq n, \ j+2\leq i\leq n+1\}.\end{array}$$
Note that  for $\beta'\in \dot{W}_{\rho}S$ we have $(\beta',\vareps_i)\in\{0,1\}$ for all $i$; moreover,
 $$\begin{array}{lcr}
 (\beta',\vareps_2)\not=0\ \Longrightarrow\ \beta'\in\Delta^-;\ \ (\beta',\vareps_1)\not=0\ \Longrightarrow  \beta'\in\Delta^+;
\ \  (\beta',\vareps_3)\not=0\ \Longrightarrow  \beta'\in\Delta^+.\end{array}$$

 We set
 $$N:=\{\eta\in Q'|\ (\eta,\vareps_1)\geq 0,\  (\eta,\vareps_2)\leq 0,\  (\eta,\vareps_3)\geq 0, \ (\eta,\vareps_i)=0\ \text{ for }3<i\leq n+1\}.$$
For any $\eta\in N$ we have $(\eta,\beta')\geq 0$ if $\beta'\in (\dot{W}_{\rho}S\cap \Delta^+)$ and
$(\eta,\beta')\leq 0$ if $\beta'\in (\dot{W}_{\rho}S\cap \Delta^-)$, so $\eta$ satisfies the assumption of~\Cor{corbeta2}.
Since 
$$Q'=\{\sum\limits_{i=1}^{n+1} a_i\vareps_i|\ a_i\in\mathbb{Z},\ 
 \sum\limits_{i=1}^{n+1} a_i=0\},$$
  $N$ is a two-dimensional lattice cone.

\subsubsection{Cases $\fsl(m|n)$ with $m-2\geq n>0$, $\fosp(m'|n')$ with $m'-5\geq n'>0$}\label{Am-n2} $ $

For $\fosp(m'|n')$ we set  $n:=n'/2$ and $m:=[m'/2]$: if $m'$ is odd, then $\fosp(m'|n')=B(m|n)$ and $m-2\geq n>0$;
if $m'$ is even, then $\fosp(m'|n')=D(m|n)$ and $m-3\geq n>0$. Set
$$\dot{\Sigma}_A:=\{\vareps_1-\vareps_2,\vareps_2-\delta_1,\delta_1-\vareps_3,\ldots, \delta_n-\vareps_{n+2},
\vareps_{n+2}-\vareps_{n+3},\ldots,\vareps_{m-1}-\vareps_{m}\}.
$$
For $\fsl(m|n)$ we take $\dot{\Sigma}:=\dot{\Sigma}_A$,
for $B(m|n)$ we take $\dot{\Sigma}:=\dot{\Sigma}_A\cup\{\vareps_m\}$;
for $D(m|n)$ we take $\dot{\Sigma}:=\dot{\Sigma}_A\cup\{\vareps_{m-1}+\vareps_{m}\}$.
(For example,  $\dot{\Sigma}=\{\vareps_1-\vareps_2,\vareps_2-\delta_1,\delta_1-\vareps_3,\vareps_3\pm\vareps_4\}$  for $D(4|1)$, and $\dot{\Sigma}=\{\vareps_1-\vareps_2,\vareps_2-\delta_1,\delta_1-\vareps_3,
\vareps_{3}\}$ for $B(3|1)$).
In all cases $\alpha_0\in\Sigma$.
The group $\dot{W}_{\rho}$ is the group of permutations of $\vareps_2,\ldots,\vareps_{n+2}$.
We take $S:=\{\vareps_{i+1}-\delta_i\}_{i=1}^n$. 

Take $\beta'\in \dot{W}_{\rho}S$. Then $\beta'=\vareps_j-\delta_i$ for $1\leq i\leq n$ and $2\leq j\leq n+2$.
Note that $(\beta',\vareps_i)\in\mathbb{Z}_{\geq 0}$ for $i=2,\ldots,n+2$; moreover  $\beta'\in\Delta^+$ if
$(\beta',\vareps_2)\not=0$ and $\beta'\in\Delta^-$ if
$(\beta',\vareps_{n+2})\not=0$.
We set
$$N:=\{\eta\in Q'|\ (\eta,\vareps_2)\geq 0,\ (\eta,\vareps_i)=0\ \text{ for }i=3,\ldots, n+1,\ (\eta,\vareps_{n+2})\leq 0\}.$$
For any $\eta\in N$ we have $(\eta,\beta')\geq 0$ if $\beta'\in \dot{W}_{\rho}S\cap \Delta^+$ and
$(\eta,\beta')\leq 0$ if $\beta'\in \dot{W}_{\rho}S\cap \Delta^-$, so $\eta$ satisfies the assumption of~\Cor{corbeta2}.
Recall that $Q'\cong \mathbb{Z}^{\ell}$ where $\ell=m-1$ for $\fsl(m|n)$ and $\ell=m$ for
$\fosp(m'|n')$.
Note that $N$ is a lattice cone of the maximal rank in $\{\nu\in Q'| \ (\nu,\vareps_i)=0\ \text{ for }i=3,\ldots, n+1 \}\cong \mathbb{Z}^{\ell-(n-1)}$. Since $\ell-(n-1)\geq 2$, $N$ contains a two-dimensional lattice cone.

\subsubsection{$\mathfrak{sop}(n'|m')$ with $n'>m'>0$}\label{BDn-m}
We set $n:=n'/2$ and $m:=[m'/2]$. We fix
$$\dot{\Sigma}=\{\delta_1-\vareps_1,\vareps_1-\delta_2,\ldots, \delta_m-
\vareps_m,\vareps_m-\delta_{m+1},
\delta_{m+1}-\delta_{m+2},\ldots, u\delta_n\}$$
with $u=1$ for $B(m|n)$ and $u=2$ for $D(m|n)$. The group $\dot{W}_{\rho}$ is the group of permutations of $\delta_1,\ldots,\delta_{m+1}$. One has $\alpha_0=\delta-2\delta_1\in\Sigma$.
We take $S=\{\delta_i-\vareps_i\}_{i=1}^m$. 

Take $\beta'\in \dot{W}_{\rho}S$. Then $\beta'=\delta_j-\vareps_i$ for $1\leq i\leq m$ and $1\leq j\leq m+1$.
Note that $(\beta',\delta_i)\in\mathbb{Z}_{\geq 0}$ for $i=1,\ldots,m+1$; moreover  $\beta'\in\Delta^+$ if
$(\beta',\delta_1)\not=0$ and $\beta'\in\Delta^-$ if
$(\beta',\delta_{m+1})\not=0$.
Set
 $$N:=\{\nu\in Q'|\ (\nu,\delta_1)\geq 0,\ (\nu,\delta_i)=0\ \text{ for }i=2,\ldots,m,\ (\nu,\delta_{m+1})\leq 0\}.$$
For any $\eta\in N$ we have $(\eta,\beta')\geq 0$ if $\beta'\in \dot{W}_{\rho}S\cap \Delta^+$ and
$(\eta,\beta')\leq 0$ if $\beta'\in \dot{W}_{\rho}S\cap \Delta^-$, so $\eta$ satisfies the assumption of~\Cor{corbeta2}.
Since $Q'\cong \mathbb{Z}^{n}$, $N$ is a lattice cone of the maximal rank in $\{\nu\in Q'| \ (\nu,\vareps_i)=0\ \text{ for }i=2,\ldots,m\}\cong \mathbb{Z}^{n-m+1}$. Since $n>m$, $N$ contains a two-dimensional lattice cone.

\subsubsection{Case $\fosp(2n+3|2n)$ for $n>0$, $k>0$}\label{Bnn+1}
We fix 
$$\dot{\Sigma}:=
\{\vareps_1-\delta_1,\delta_1-\vareps_2,\ldots, \vareps_n-\delta_n,\delta_n-\vareps_{n+1},
\vareps_{n+1}\},\ \ \ S=\{\vareps_{i}-\delta_i\}_{i=1}^n.$$
In this case $\alpha_0\not\in\Sigma$ and 
$(k\Lambda_0+\rho,\alpha_0)>0$ since $k>0$. 
 The group $\dot{W}_{\rho}$ is the group of permutations of
 $\vareps_1,\ldots,\vareps_{n+1}$.  
 
 Take $\beta'\in \dot{W}_{\rho}S$. Then $\beta'=\vareps_j-\delta_i$ for $1\leq i\leq n$ and $1\leq j\leq n+1$.
Note that $(\beta',\vareps_i)\in\mathbb{Z}_{\geq 0}$ for $i=1,\ldots,n+1$; moreover  $\beta'\in\Delta^+$ if
$(\beta',\vareps_1)\not=0$ and $\beta'\in\Delta^-$ if
$(\beta',\vareps_{n+1})\not=0$. One has 
$$Q'=\{\sum\limits_{i=1}^m a_i\vareps_i|\ a_i\in\mathbb{Z},\ 
 \sum\limits_{i=1}^m a_i\ \text{ is even}\}.$$ 
 Set  $N:=\mathbb{Z}_{\geq 0} 2\vareps_1-\mathbb{Z}_{\geq 0} 2\vareps_{n+1}$.
 For any $\eta\in N$ we have $(\eta,\beta')\geq 0$ if $\beta'\in \dot{W}_{\rho}S\cap \Delta^+$ and
$
(\eta,\beta')\leq 0$ if $\beta'\in \dot{W}_{\rho}S\cap \Delta^-$, so $\eta$ satisfies the assumption of~\Cor{corbeta2}.
 Clearly, $N$ is a two-dimensional lattice cone.

\subsubsection{Case $\fosp(2n+4|2n)$ for $n>0$}\label{Dn+2n}
We fix 
$$
\begin{array}{l}
\dot{\Sigma}:=\{\vareps_1-\vareps_2,\vareps_2-\delta_1,\delta_1-\vareps_3,\ldots, \vareps_{n+1}-\delta_n,\delta_n\pm \vareps_{n+2}\},\ \  S:=\{\delta_i-\vareps_{i+2}\}_{i=1}^n.
\end{array}$$
Then $\alpha_0=\delta-(\vareps_1+\vareps_2)\in\Sigma$. The group $\dot{W}_{\rho}$
is the group of signed permutations of $\vareps_2,\ldots,\vareps_{n+2}$
which change the even number of signs. 
One has 
$$\begin{array}{l}
\dot{W}_{\rho}S\cap\Delta^-=\{\delta_i-\vareps_j|\ 1\leq i\leq n, 2\leq j\leq i+1\}\\
\dot{W}_{\rho}S\cap\Delta^+=\{\delta_i+\vareps_j|\ 1\leq i\leq n, 2\leq j\leq n+2\}\cup\{\delta_i-\vareps_j|\ 1\leq i\leq n, i+2\leq j\leq n+2\}.
\end{array}$$
Take $\beta'\in \dot{W}_{\rho}S$. Then $\beta'=\delta_i\pm \vareps_j$ for $1\leq i\leq n$ and $2\leq j\leq n+2$. 
Note that $(\beta',\vareps_1)=0$.
Moreover, $(\beta',\vareps_2)\in\mathbb{Z}_{\geq 0}$ if $\beta'\in\Delta^+$ and
 $(\beta',\vareps_2)\in\mathbb{Z}_{\leq 0}$ if $\beta'\in\Delta^-$.
Take $\nu:=2a_1\vareps_1+2a_2\vareps_2$ for $a_2\in\mathbb{Z}_{\geq 0}$, $a_1\in\mathbb{Z}$.  Then 
$\nu\in Q'$ and 
$(\nu,\beta')\geq 0$ (resp., $(\nu,\beta')\leq 0$) 
if $\beta'\in \dot{W}_{\rho}S\cap\Delta^+$, (resp., if $\beta'\in \dot{W}_{\rho}S\cap\Delta^-$). 
Thus $\eta$ satisfies the assumption of~\Cor{corbeta2}.
 Clearly, $N=2\mathbb{Z}\vareps_1+2\mathbb{Z}_{\geq 0}\vareps_2$ contains a two-dimensional lattice cone.

\subsection{Proof of~\Thm{thm3}  (iv)}\label{thm3iii}
By~\Cor{correg2}  for $\dot{\fg}\not=\mathfrak{psl}(n|n)$, it is enough to find $\mu_0\in\mathbb{Z}S$ such that 
$$\lambda:=k\Lambda_0+\rho-\mu_0$$ 
is $W^{\#}$-regular. We will do it using the following lemma.

\subsubsection{}
\begin{lem}{lemreg}
 The weight $\lambda=k\Lambda_0+\rho-\mu_0$ is $W^{\#}$-regular if and only if $(\lambda,\alpha)$ is  not divisible by $k+h^{\vee}$
for any $\alpha\in (\dot{\Delta}^{\#})^+$.
\end{lem}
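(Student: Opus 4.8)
The plan is to exploit the fact that $W^{\#}$ is the affine Weyl group of the finite root system $\dot{\Delta}^{\#}$, together with the classical principle that point stabilizers in a Weyl group are generated by the reflections fixing the point. First I would record the level of $\lambda$: since $\mu_0\in\mathbb{Z}S\subset\dot{\fh}^*$ we have $(\mu_0,\delta)=0$, while $(k\Lambda_0,\delta)=k$ and $(\rho,\delta)=h^{\vee}$ because $\rho=h^{\vee}\Lambda_0+\dot{\rho}$ with $(\Lambda_0,\delta)=1$ and $(\dot{\rho},\delta)=0$. Hence $(\lambda,\delta)=k+h^{\vee}=:a$, and since $h^{\vee}\geq 0$, $k\in\mathbb{Z}_{\geq 0}$ and $k+h^{\vee}\not=0$ we have $a>0$, so $\lambda$ lies in the Tits cone of $\fg^{\#}$. (All quantities used depend on $\lambda$ only through pairings with roots, so they are independent of the non-unique choice of $\rho$.)

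Next I would recall that the real roots of $\fg^{\#}$ are exactly $\{\alpha+n\delta\mid \alpha\in\dot{\Delta}^{\#},\ n\in\mathbb{Z}\}$, each of positive square length, and that under $W^{\#}=\dot{W}^{\#}\rtimes T$ they generate $W^{\#}$ as an affine Weyl group. The key input is the standard result (Kac's book, or the theory of affine reflection groups) that the stabilizer $\Stab_{W^{\#}}\lambda$ of a point $\lambda$ in the Tits cone is generated by the reflections $s_{\beta}$, $\beta$ a real root of $\fg^{\#}$, with $(\lambda,\beta)=0$. Consequently $\lambda$ is $W^{\#}$-regular if and only if $(\lambda,\beta)\not=0$ for every real root $\beta$ of $\fg^{\#}$.

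It then remains to translate this condition. For $\beta=\alpha+n\delta$ one computes $(\lambda,\beta)=(\lambda,\alpha)+n(\lambda,\delta)=(\lambda,\alpha)+na$, which vanishes for some $n\in\mathbb{Z}$ precisely when $(\lambda,\alpha)\in a\mathbb{Z}$, that is, when $(\lambda,\alpha)$ is divisible by $k+h^{\vee}$. Since $\alpha$ and $-\alpha$ yield the same condition, it suffices to let $\alpha$ range over $(\dot{\Delta}^{\#})^+$. Therefore $\lambda$ is $W^{\#}$-regular exactly when $(\lambda,\alpha)$ is not divisible by $k+h^{\vee}$ for any $\alpha\in(\dot{\Delta}^{\#})^+$, which is the assertion.

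The genuinely easy implication is ``divisibility $\Rightarrow$ non-regular'': if $(\lambda,\alpha)=ma$ then $\beta:=\alpha-m\delta$ is a real root with $(\lambda,\beta)=0$, so the nontrivial reflection $s_{\beta}$ fixes $\lambda$. The main obstacle is the converse, for which I lean on the stabilizer-generated-by-reflections theorem; the hypothesis $a>0$ (equivalently, $\lambda$ in the Tits cone) is exactly what makes it applicable. If a self-contained argument is preferred, the same conclusion follows by writing an arbitrary $w\in W^{\#}$ as $t_{\mu}\bar{w}$ with $\bar{w}\in\dot{W}^{\#}$, $\mu\in Q'$, checking from the formula for $t_{\mu}$ that $w\lambda=\lambda$ forces $\dot{\lambda}-\bar{w}\dot{\lambda}=a\mu$ (here $\dot{\lambda}$ denotes the $\dot{\fh}^*$-component of $\lambda$, and the $\delta$-component then matches automatically), and invoking that point stabilizers of the affine reflection group $\dot{W}^{\#}\ltimes aQ'$ acting on $\dot{\fh}^*$ are generated by the reflections in the hyperplanes $(\dot{x},\alpha)=ma$ passing through the point.
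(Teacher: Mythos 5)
Your proof is correct and follows essentially the same route as the paper: the paper also invokes Kac's Prop.\ 3.12 (stabilizers in the Tits cone are generated by the reflections they contain, so regularity is equivalent to $(\lambda,\beta)\not=0$ for all real roots $\beta$ of $\fg^{\#}$), describes those real roots as $s\delta\pm\alpha$ with $\alpha\in(\dot{\Delta}^{\#})^+$, and translates non-vanishing into the divisibility condition on $(\lambda,\alpha)$ modulo $(\lambda,\delta)=k+h^{\vee}$. Your explicit verification that $(\lambda,\delta)=k+h^{\vee}>0$ places $\lambda$ in the Tits cone is a point the paper leaves implicit, and is exactly the hypothesis needed to apply the stabilizer theorem.
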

\begin{proof}
By~\cite{Kbook}, Prop. 3.12 a weight $\lambda$  is 
$W^{\#}$-regular if and only if $(\lambda,\alpha)\not=0$ for any 
real root $\alpha\in \Delta^{\#}$. Since any real root in $\Delta^{\#}$
takes the form $s\delta\pm \alpha$ for $\alpha\in (\dot{\Delta}^{\#})^+$, 
$\lambda$ is $W^{\#}$-regular if and only if $(\lambda,\alpha)$ is  not divisible by $(\lambda,\delta)$
for any $\alpha\in (\dot{\Delta}^{\#})^+$. 
\end{proof}

\subsubsection{Case $\mathfrak{spo}(2n|2n+1)=B(n|n)$}
We take 
$${\Sigma}:=\{\delta- \delta_1-\vareps_1; \vareps_1-\delta_1,\delta_1-\vareps_2,\ldots,
\vareps_n-\delta_n, \delta_n\},\ \ \ S=\{\vareps_i-\delta_i\}_{i=1}^n.$$
We have $(\delta_i,\delta_i)=\frac{1}{2}$  and $(\dot{\Delta}^{\#})^+=\{\delta_i\pm\delta_j\}_{1\leq i\leq j\leq n}$. One has
$h^{\vee}=\frac{1}{2}$ and  $(\rho,\delta_i)=\frac{1}{4}$ for $i=1,\ldots,n$.
We set $\mu_0:=\sum\limits_{i=1}^n (n-i) (\vareps_i-\delta_i)$.
Then $(\lambda,\vareps_i)=\frac{2n-2i+1}{4}$ and so for any $\alpha\in (\dot{\Delta}^{\#})^+$
one has 
$$0<(\lambda,\alpha)\leq n-\frac{1}{2}< k+h^{\vee}\ \ \text{ if } k\geq n.$$
 Hence $\lambda$ is $W^{\#}$-regular if $k\geq n$.

\subsubsection{Case $\mathfrak{spo}(2n|2n)=D(n|n)$} 
We take 
$$\dot{\Sigma}:=\{\delta-2\delta_1;\delta_1-\vareps_1,\vareps_1-\delta_2,\ldots, \delta_n-\vareps_{n},
\delta_n+\vareps_n\},\ \ \    S=\{\delta_i-\vareps_i\}_{i=1}^n.$$
We have $(\delta_i,\delta_j)=\frac{1}{2}\delta_{ij}$  and $(\dot{\Delta}^{\#})^+=\{\delta_i\pm\delta_j\}_{1\leq i\leq j\leq n}$,
$h^{\vee}=1$ and $\rho=\Lambda_0$.
We   take
$\mu_0:=-\sum\limits_{i=1}^n (n+1-i)(\vareps_i-\delta_i)$.
Then $(\lambda,\delta_j)=\frac{n+1-i}{2}$ and so for any $\alpha\in (\dot{\Delta}^{\#})^+$
one has 
$$0<(\lambda,\alpha)\leq n+\frac{1}{2}< k+h^{\vee}\ \ \text{ if } k\geq n.$$
Hence $\lambda$ is $W^{\#}$-regular if $k\geq n$.

\subsubsection{Case $\mathfrak{osp}(2n+2|2n)=D(n+1|n)$} 
We take
$${\Sigma}:=\{\delta- \delta_1-\vareps_1; \vareps_1-\delta_1,\delta_1-\vareps_2,\ldots, \vareps_{n}-\delta_n,\delta_n-\vareps_{n+1},
\delta_n+\vareps_{n+1}\},\ \ S:=\{\vareps_i-\delta_i\}_{i=1}^n.$$
Then  $h^{\vee}=0$, $\rho=0$.
We have $(\vareps_i,\vareps_j)=\delta_{ij}$  and 
$(\dot{\Delta}^{\#})^+=\{\vareps_i\pm\vareps_j\}_{1\leq i<j\leq n+1}$.

For $\mu_0:=-\sum\limits_{i=1}^n (n+1-i)(\vareps_i-\delta_i)$ we have
$(\lambda,\vareps_i)=n+1-i$ for $i=1,\ldots,n+1$. For $1\leq i<j\leq n+1$ we have 
$$0<(\lambda,\vareps_i\pm \vareps_j)\leq 2n-1.$$
Hence 
$\lambda$ is  $W^{\#}$-regular if $k+h^{\vee}=k\geq 2n$.

\subsubsection{Case $\mathfrak{psl}(n|n)$}
Consider the remaining case $\dot{\fg}=\mathfrak{psl}(n|n)$ for $n\geq 3$.
One has $h^{\vee}=0$. We  normalize $(.\, ,.\, )$   by $(\vareps_i,\vareps_j)=\delta_{ij}$.  
We take 
$${\Sigma}:=\{\delta-\vareps-1+\delta_n;\vareps_1-\delta_1,\delta_1-\vareps_2,\ldots, \vareps_{n}-\delta_n\},\  \ \  
S:=\{\vareps_i-\delta_i\}_{i=1}^n.$$
Then $\rho=0$. Take $\mu_0:=-\sum_{i=1}^n i (\vareps_i-\delta_i)$.
Then  for $i<j$ we have
$1-n\leq (\lambda,\vareps_i-\vareps_j)<0$, so 
$\lambda$ is  $W^{\#}$-regular if $k+h^{\vee}=k\geq n$.
In the light of~\Cor{coreta}, $Y_L(\mu_0)$ contains
$(k+h^{\vee})N$, where 
$$N:=\{\sum a_i\vareps_i|\ a_i\in\mathbb{Z},\ \sum\limits_{i=1}^n a_i=0,\ \ a_1>0,\  a_i<0\ \text{ for } i=2,\ldots,n\}.$$
Observe that $N\cup\{0\}$ contains the   lattice cone 
 $\sum\limits_{i=2}^n \mathbb{Z}_{\geq 1}(\vareps_1-\vareps_i)$ of dimension $n-1$.
\qed

\subsection{Proof of~\Thm{thm3} (v)}\label{spo2n2n+2}
We consider   $\dot{\fg}:=\mathfrak{spo}(2n|2n+2)$ and fix
$$\Sigma:=\{\delta-\vareps_1-\delta_1; \vareps_1-\delta_1,\delta_1-\vareps_2,\ldots,
\vareps_n-\delta_n,\delta_n-\vareps_{n+1}\}, \ \ S:=\{\vareps_i-\delta_i\}.$$
One has $\rho=0$.

We have $\mathbb{Q}\dot{\Delta}=E_+\oplus E_-$, where $E_+$ is the span
of $\delta_1,\ldots,\delta_n$ and $E_-$ is the span of $\vareps_1,\ldots,\vareps_{n+1}$.
The restriction of $(.\, ,.\,)$ to $E_+$ (resp., to $E_-$) is positively
(resp., negatively) definite (we have $(\delta_i,\delta_j)=\frac{1}{2}
\delta_{ij}=-(\vareps_i,\vareps_j)$).
For $\nu\in \mathbb{Q}\Delta$ we write $\dot{\nu}=\nu_-+\nu_+$ with
$\nu_{\pm}\in E_{\pm}$ and 
denote by $P_{\pm}$ the projections $P_{\pm}: \mathbb{Q}\Delta\to E_{\pm}$
given by $P_{\pm}(\nu):=\nu_{\pm}$.

In this case $\dot{\fg}_{\ol{0}}=\mathfrak{o}_{2n+1}\times \mathfrak{sp}_{2n}$
and $\dot{\Delta}^{\#}\subset E_+$ is the root system of type $C_n$.
Using the notation of Section~\ref{sectdenomfin} we have $\dot{W}''=\dot{W}^{\#}$
and $\dot{W}'$ is the Weyl group of type $D_{n+1}$.
Recall that $W':=T\times \dot{W}'$ and $\rho=0$, so
$$q_L=e^{-k\Lambda_0}\sum\limits_{\nu\in Q'\setminus\{0\}} 
t_{\nu}\bigl(\displaystyle\cF_{\dot{W}'}
(\frac{e^{k\Lambda_0}}{\prod\limits_{\beta\in S}(1+e^{-\beta})})\bigr).
$$

\subsubsection{}
\begin{lem}{lemqL2n+22n}
For $k>2n$  the set
$\supp (q_L)$ contains $-(t_{\eta}\mu_0+k\eta)$ for $\mu_0:=\sum\limits_{i=1}^n i(\vareps_i-\delta_i)$ and 
any $\eta\in Q'$ with $(\eta,\delta_i)>0$ for all $i=1,\ldots, n$.
\end{lem}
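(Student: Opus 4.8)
The plan is to expand $q_L$ completely and isolate one surviving monomial. Using the expansion rule~(\ref{wmudef}) together with the translation formula~(\ref{tnu}), and the facts $w\Lambda_0=\Lambda_0$ for $w\in\dot{W}'$ and $\rho=0$, one writes
\[
q_L=\sum_{\nu\in Q'\setminus\{0\}}\ \sum_{w\in\dot{W}'}\ \sum_{\mu\in\mathbb{Z}_{\geq 0}S}\sgn(w)(-1)^{p(\mu)}\,e^{\,t_\nu(k\Lambda_0-|w\mu|)-k\Lambda_0}.
\]
The claimed element is produced by the triple $(\nu,w,\mu)=(\eta,\Id,\mu_0)$: since $\mu_0=\sum_{i=1}^n i(\vareps_i-\delta_i)\in\mathbb{Z}_{\geq 0}S$ and every $\vareps_i-\delta_i$ is a simple root, one has $|\Id\,\mu_0|=\mu_0$, so this triple contributes precisely $t_\eta(k\Lambda_0-\mu_0)-k\Lambda_0$, the element displayed in the statement, with sign $(-1)^{p(\mu_0)}\neq0$. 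It therefore remains to show that no other triple contributes the same weight with the opposite sign, i.e. that this monomial is not cancelled.

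First I would match weights through the orthogonal decomposition $\mathbb{Q}\dot{\Delta}=E_+\oplus E_-$ (together with the $\delta$-direction). Since $\nu\in Q'\subset E_+$ and, by~(\ref{tnu}), $t_\nu$ shifts a weight only in the $E_+$- and $\delta$-directions, the $E_-$-component of $t_\nu(k\Lambda_0-|w\mu|)$ equals $-P_-(|w\mu|)$; hence any competing triple must satisfy $P_-(|w\mu|)=P_-(\mu_0)=\sum_{i=1}^n i\vareps_i$. Now $\dot{W}'=W(D_{n+1})$ acts on $\vareps_1,\dots,\vareps_{n+1}$ by signed permutations with an even number of sign changes and fixes all $\delta_i$, while $|w\mu|$ is computed from $w\mu$ through~(\ref{wmudef}). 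Because the target coefficients $1,\dots,n$ are pairwise distinct and the coefficient of $\vareps_{n+1}$ is $0$, this constraint pins $(w,\mu)$ down to a short explicit list, the reflections being forced to move $\vareps_{n+1}$ and to convert some simple summands into non-positive roots.

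Then for each alternative $(w,\mu)$ on this list I would read off $\nu$ from the $E_+$-component of the weight equation, which gives $k(\nu-\eta)=P_+(|w\mu|)-P_+(\mu_0)$. The right-hand side is a fixed element of the lattice spanned by $\delta_1,\dots,\delta_n$ whose coefficients, built from the coefficients $\le n$ of $\mu_0$ together with the $+1$-shifts in~(\ref{wmudef}), are bounded in absolute value by $2n$. Thus $\nu-\eta=\tfrac1k\,\zeta$ for a vector $\zeta$ of bounded height, and $\nu\in Q'$ combined with $k>2n$ is impossible unless $\zeta=0$, that is unless $(w,\mu)=(\Id,\mu_0)$. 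The hypothesis $(\eta,\delta_i)>0$ for all $i$ guarantees $\eta\in Q'\setminus\{0\}$ and keeps the auxiliary $\mu$ inside $\mathbb{Z}_{\geq 0}S$. Consequently the monomial above is uncancelled, its coefficient is $\pm1$, and the weight lies in $\supp(q_L)$.

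The main obstacle I expect is exactly this non-cancellation step: correctly enumerating the reflected contributions of $W(D_{n+1})$ and verifying that the threshold $k>2n$ excludes all of them. The delicate bookkeeping is the behaviour of $|w\mu|$ under the signed permutations — keeping track, via~(\ref{wmudef}) and~(\ref{PSwmu}), of which $w\beta$ fall outside $\Delta^+$ and hence of the extra $+1$-shifts — since it is these shifts that produce the bounded vector $\zeta$ on which the bound $k>2n$ turns.
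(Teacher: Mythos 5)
Your elimination mechanism (project exponents onto $E_-$ to pin the multiset of coefficients to $\{1,\dots,n\}$, then onto $E_+$ to get $k(\nu-\eta)$ equal to a vector with coefficients bounded by $2n$, and use $k>2n$ together with $\nu-\eta\in Q'$) is exactly the paper's argument, and it is sound. The genuine gap is in your very first step, the expansion of $q_L$. You expand $\cF_{\dot{W}'}\bigl(e^{k\Lambda_0}/\prod_{\beta\in S}(1+e^{-\beta})\bigr)$ by the rule~(\ref{wmudef}) and then apply $t_\nu$ \emph{term by term}, so your monomials are $e^{t_\nu(k\Lambda_0-|w\mu|)-k\Lambda_0}$ with $w\in\dot{W}'$. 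But $q_L$ is the expansion of $\cF_{W'\setminus\dot{W}'}\bigl(e^{k\Lambda_0}/\prod_{\beta\in S}(1+e^{-\beta})\bigr)$ in the completion where supports are bounded above: its monomials are $e^{w'(k\Lambda_0)-|w'\mu|-k\Lambda_0}$ with $w'=t_\nu w$, and in $|w'\mu|$ the $+1$-shifts of~(\ref{wmudef}) are governed by which roots $t_\nu w\beta$ are positive, not by which $w\beta$ are positive. This reading is forced: by~\Cor{cors=1} and~(\ref{34}), $\supp(q_L)$ has to be comparable with $\supp(q_{\cJ})$, where $q_{\cJ}=e^{-k\Lambda_0}R\sum_{i\geq 1}\ch \cJ^i(k)$ has support bounded above. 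Term-wise translation and re-expansion genuinely differ: already for $w=\Id$ and $\nu$ with $(\nu,\delta_i)<0$ one has $t_\nu(\vareps_i-\delta_i)\in\Delta^-$, and your series then contains $\sum_{m\geq 0}(-1)^m e^{-m\,t_\nu(\vareps_i-\delta_i)}$, whose exponents are unbounded above in the $\delta$-direction; one checks (by your own projection argument) that these terms do not cancel, so the series you analyze is not $q_L$ and your conclusion is about a different object.

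A tell-tale symptom is that your argument uses the hypothesis $(\eta,\delta_i)>0$ only to guarantee $\eta\neq 0$, whereas in the correct expansion this hypothesis is precisely what gives $t_\eta S\subset\Delta^+$, hence $|t_\eta\mu_0|=t_\eta\mu_0$, i.e.\ that the pair $(t_\eta,\mu_0)$ contributes the exponent claimed in the statement; for $\eta$ with some $(\eta,\delta_i)<0$ that contribution acquires $+1$-shifts and the claim is not available. The repair is routine and lands exactly on the paper's proof: index the monomials by $w'=t_{\eta'}y\in W'\setminus\dot{W}'$ and $\mu\in\mathbb{Z}_{\geq 0}S$, write $|w'\mu|=w'\mu'$ with $\mu'=\sum_i m'_i(\vareps_i-\delta_i)\in\mathbb{Z}S$ using~(\ref{wmuprop})(i), and run your two projections on the equation $|w'\mu|+k\Lambda_0-w'(k\Lambda_0)=t_\eta\mu_0+k\Lambda_0-t_\eta(k\Lambda_0)$; the positivity bookkeeping you flag as the delicate point never needs to be enumerated, because once $k>2n$ forces $m'_i=i>0$ and $\eta'=\eta$, the sign conventions in~(\ref{wmudef}) force a posteriori that $w'(\vareps_i-\delta_i)\in\Delta^+$, whence $\mu=\mu'=\mu_0$ and then $y=\Id$. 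So the combinatorial core of your proposal is correct and identical to the paper's, but as written the proof establishes the statement for the wrong formal series.
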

\begin{proof}
The coefficient of $e^{-\nu}$ in $q_L$ is equal to
$$a_{\nu}:=\sum\limits_{(w,\mu)\in B(\nu)} (-1)^{p(\mu)}\sgn (w)$$
for $B(\nu):=\{(w,\mu)| \ w\in W'\setminus \dot{W}',\ \mu\in\mathbb{Z}_{\geq 0}S,\  
 |w\mu|+k\Lambda_0-w(k\Lambda_0)=\nu\}$.

Take  $\eta\in Q'$ such that $(\eta,\delta_i)>0$ for all $i=1,\ldots, n$ and set 
$\nu:=|t_{\eta}\mu_0|+k\Lambda_0-w(k\Lambda_0)$.  One has 
$t_{\eta}S\subset\Delta^+$, so 
$|t_{\eta}\mu_0|=t_{\eta}\mu_0$. This gives
$$\nu=t_{\eta}\mu_0+k\eta,\ \ (t_{\eta},\mu_0)\in B(\nu).$$
It is enough to verify that 
\begin{equation}\label{BnuD}
B_{\nu}=\{(t_{\eta},\mu_0)\}.
\end{equation}
Take $(w,\mu)\in B(\nu)$. Recall that 
$|w\mu|=w\mu'$ for some $\mu'\in\mathbb{Z}S$.
Since
 $w\in W'=T\times \dot{W}'$ we have $w=t_{\eta'}y$ with $y\in  \dot{W}'$ and $\eta'\in Q'$. Then 
 $(w,\mu)\in B(\nu)$ gives
$$\nu=|w\mu|+k\Lambda_0-w(k\Lambda_0)=t_{\eta'}y\mu'+k\eta'.$$
Since $\mu'\in\mathbb{Z}S$ we have $\mu'=:\sum\limits_{i=1}^n m'_i(\vareps_i-\delta_i)$. Therefore
$$\begin{array}{l}
\sum\limits_{i=1}^n i\vareps_i =P_-(\nu)=P_-(t_{\eta'}y\mu'+k\eta')=P_-(y\mu')=
\sum\limits_{i=1}^n m'_i (y \vareps_i),\\
-\sum\limits_{i=1}^n i\delta_i +k\eta=P_+(\mu_0+k\eta)=P_+(t_{\eta'}y\mu'+k\eta')=P_+(y\mu')+k\eta'=
-\sum\limits_{i=1}^n m'_i \delta_i
+k\eta'.\end{array}$$
The first formula gives 
$\{|m'_i|\}_{i=1}^n=\{1,2,\ldots,n\}$ and the second formula implies that
$i-m'_i$ is divisible by $k$ for all $i=1,\ldots,n$.  Since $k>2n$ this forces
$m'_i=i$ for all $i=1,\ldots,n$ that is $\mu'=\mu_0$. Then
 the second formula gives $\eta'=\eta$. Since $\mu'=\mu_0\in\mathbb{Z}_{\geq 0}S$ we have
 $wS\subset\Delta^+$  and $\mu'=\mu$. Hence
 $\mu=\mu_0$ and $w\mu_0=t_{\eta} \mu_0$.
 By above, $w=t_{\eta}y$ so $y\mu_0=\mu_0$. Recall that $y$ is a signed permutations
 of $\{\vareps_i\}_{i=1}^{n+1}$ which changes an even number of signs.
 Since $\mu_0=\sum\limits_{i=1}^n i(\vareps_i-\delta_i)$, this gives $y=\Id$. This establishes~(\ref{BnuD}). 
\end{proof}

\subsubsection{}
\begin{lem}{lemqJ2n+22n}
For each $\zeta\in E_-$ the set $$\{\nu\in E_+|\ -(\nu+\zeta)\in \dot{P}(\supp(q_{\cJ})\}$$
is a finite union of one-dimensional affine lattice cones.
\end{lem}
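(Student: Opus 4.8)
The plan is to bound $\dot{P}(\supp(q_{\cJ}))$ crudely via the Jantzen sum formula and then slice it by fixing the $E_-$-component. Writing $q_{\cJ}=e^{-k\Lambda_0}R\sum_{i\geq 1}\ch\cJ^i(k)$ and using~(\ref{Janmu}) with $\rho=0$, a weight $-\mu$ lies in $\supp(q_{\cJ})$ only if $\dot{K}(\mu-r\gamma)\neq 0$ for some $r\in\mathbb{Z}_{\geq 1}$ and $\gamma\in\Delta^+\setminus\dot{\Delta}$; hence $\mu=r\gamma+\alpha$ with $\alpha\in\supp\dot{R}$. Applying $\dot{P}$ and setting $\dot{\gamma}:=\dot{P}(\gamma)\in\dot{\Delta}\cup\{0\}$ (and $\dot{P}(\alpha)=\alpha$), I obtain
$$\dot{P}(\supp(q_{\cJ}))\subseteq\{-(r\dot{\gamma}+\alpha)\mid r\in\mathbb{Z}_{\geq 1},\ \dot{\gamma}\in\dot{\Delta}\cup\{0\},\ \alpha\in\supp\dot{R}\}.$$
Since $\dot{\Delta}\cup\{0\}$ is finite and the relevant Weyl group is finite, the whole argument reduces to fixing a pair $(\dot{\gamma},w)$ and showing the corresponding admissible $\nu$ trace a single affine lattice cone of dimension at most one.

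The key structural input is the shape of $\alpha$. By the denominator identity~(\ref{denomexp}) (with $\dot{\rho}=0$), $\alpha\in\supp\dot{R}$ forces $\alpha=|w\mu'|$ for some $w\in\dot{W}'$ and $\mu'\in\mathbb{Z}_{\geq 0}S$, and by~(\ref{wmuprop})(i) one may write $\alpha=w\tilde{\mu}$ with $\tilde{\mu}=\sum_{i=1}^n c_i(\vareps_i-\delta_i)\in\mathbb{Z}S$. Here $\dot{W}'$ is the type $D_{n+1}$ Weyl group acting on $E_-$ and fixing $E_+$ pointwise (its roots lie in $E_-\perp E_+$), so $w\delta_i=\delta_i$ and therefore $P_+(\alpha)=-\sum_i c_i\delta_i$ while $P_-(\alpha)=\sum_i c_i\,w\vareps_i$. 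Crucially, the $n$ vectors $w\vareps_1,\dots,w\vareps_n$ are linearly independent in $E_-$, so the $E_-$-component $P_-(\alpha)$ determines the coefficients $(c_i)$, and hence determines the full $E_+$-component $P_+(\alpha)$.

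To finish I would fix $(\dot{\gamma},w)$ and project the defining relation $\nu+\zeta=r\dot{\gamma}+\alpha$ onto $E_\pm$, obtaining $\zeta=rP_-(\dot{\gamma})+\sum_i c_i\,w\vareps_i$ and $\nu=rP_+(\dot{\gamma})-\sum_i c_i\delta_i$. The first relation is solvable only when $\zeta-rP_-(\dot{\gamma})\in\spn(w\vareps_1,\dots,w\vareps_n)$, and when it is, it determines $(c_i)$ uniquely as an affine function of $r$; substituting into the second relation shows that $\nu$ is an affine function of $r$. Thus, as $r$ ranges over the admissible values (an arithmetic-progression ray after imposing integrality of the $c_i$ and the sign conditions coming from~(\ref{wmudef})), $\nu$ sweeps out a single affine lattice cone of dimension at most one; taking the union over the finitely many $(\dot{\gamma},w)$ yields the claim, exactly in the spirit of~\Cor{YJmu0}. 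The one place that demands care is the case $P_-(\dot{\gamma})\neq 0$, where $\alpha$ itself grows linearly with $r$ and one might fear a two-dimensional spread of $\nu$; the resolution is precisely that fixing the $E_-$-component $P_-(\alpha)=\zeta-rP_-(\dot{\gamma})$ rigidly pins the coefficients $(c_i)$, hence $P_+(\alpha)$, as affine in $r$, so that $\nu$ still moves along a single line.
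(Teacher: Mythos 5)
Your proof is correct and follows essentially the same route as the paper: Jantzen sum formula to write elements of $\dot{P}(\supp(q_{\cJ}))$ as $-(r\dot{\gamma}+\beta)$ with $\dot{K}(\beta)\neq 0$, the denominator identity to write $\beta=w\tilde{\mu}$ with $w\in\dot{W}'$, $\tilde{\mu}\in\mathbb{Z}S$, projection onto $E_{\pm}$, and the observation that the $E_-$-component of a $\mathbb{Z}S$-element rigidly determines its $E_+$-component, so that $\nu$ is affine in $r$ for each fixed pair $(\dot{\gamma},w)$. Your linear-independence argument for recovering the coefficients $(c_i)$ is exactly the paper's mechanism, which it packages instead as applying $y^{-1}$ and the explicit map $\phi:E_-\to E_+$, $\phi(\sum_i m_i\vareps_i)=\sum_{i=1}^n m_i\delta_i$, with $P_+(\mu)=-\phi(P_-(\mu))$ for $\mu\in\mathbb{Z}S$.
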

\begin{proof}
Let $\phi: E_-\to E_+$ be the map given by
$\phi(\sum\limits_{i=1}^{n+1} m_i\vareps_i):=\sum\limits_{i=1}^{n} m_i\delta_i$.
For any $\mu\in\mathbb{Z}S$ we have
$$\mu=P_+(\mu)+P_- (\mu),\ \  P_+(\mu)=-\phi(P_+(\mu)).$$

Take
$\nu\in E_+$ such that $-(\nu+\zeta)\in \dot{P}(\supp(q_{\cJ}))$. By~(\ref{Janmu})
 there exist $r\geq 1$ and $\dot{\gamma}\in \dot{\Delta}\cup\{0\}$ 
such that $\dot{K}(\nu+\zeta+r\dot{\gamma})\not=0$. The denominator identity gives
$$\nu+\zeta+r\dot{\gamma}=y\mu\ \ \text{ for some $y\in\dot{W}'$ and }\ \mu\in\mathbb{Z}S.$$
Therefore
$$\zeta+r P_-(\dot{\gamma})=P_-(y\mu)=y P_-(\mu),\ \ \ \nu+rP_+(\dot{\gamma})=P_+(y\mu)=P_+(\mu).$$
Then
$$\nu=P_+(\mu)-rP_+(\dot{\gamma})=-\phi(P_-(\mu))-rP_+(\dot{\gamma})=-\phi\bigl(y^{-1}\zeta+r y^{-1}P_-(\dot{\gamma})\bigr)-rP_+(\dot{\gamma})$$
so $\nu=-\phi\bigl(y^{-1}\zeta)-r\bigl( \phi(y^{-1}P_-(\dot{\gamma}))+P_+(\dot{\gamma})\bigr)$.
We see that for a fixed pair $(\dot{\gamma}, y)$ the suitable values of $\nu$ lie in a one-dimensional affine
lattice cone. Since $\dot{\gamma}\in \in \dot{\Delta}\cup\{0\}$  and $y\in\dot{W}'$,
the numbwer of pairs is finite. The assertion follows.
\end{proof}

\subsubsection{}
By~\Lem{lemqL2n+22n} for $\zeta:=P_-(\mu_0)$ the set 
$\{\nu\in E_+|\ -(\nu+\zeta)\in \dot{P}(\supp(q_{L})\}$ contains an affine lattice cone of dimension
$n>1$. Using~\Lem{lemqJ2n+22n} we obtain $q_{\cJ}+q_L\not=0$ as required.

\section{Appendix}
\label{sectsl2}
In this Appendix we prove a lemma on Verma modules over the affine Lie algebra
$\ft:=\hat{\fsl}_2$ with the simple roots $\alpha_1=\alpha$, $\alpha_0:=\delta-\alpha$
and the invariant bilinear form normalized by $(\alpha,\alpha)=2$.  
Its corollary is used in the proof of~\Thm{thm2}. 
We denote by $w\circ$ the usual shifted action of the Weyl group $W(\ft)$ on $\fh^*$:
$w\circ\lambda:=w(\lambda+\rho_{\ft})-\rho_{\ft}$.

Let  $\lambda\in\fh^*$ and let $N_{\ft}(\lambda)$ be
the maximal locally finite $\fsl_2$-quotient of $M_{\ft}(\lambda)$, and let $N'_{\ft}(\lambda)$
be the maximal proper submodule of $N_{\ft}(\lambda)$. One has
$$N_{\ft}(\lambda)\not=0\ \ \ \Longrightarrow\ \ \ (\lambda,\alpha)\in\mathbb{Z}_{\geq 0},\ \ 
L_{\ft}(\lambda)=N_{\ft}(\lambda)/N'_{\ft}(\lambda).$$

We will use the following lemma which computes
 $N'_{\ft}(\lambda)$ in the case $(\lambda,\delta)\in\mathbb{Z}_{\geq -1}$
(a similar approach works in the case when  $(\lambda,\delta)\not=-2$).

\subsection{}
\begin{lem}{lemsl2}
Let   $\lambda\in\fh^*$ be such that $k:=(\lambda,\delta)\in\mathbb{Z}_{\geq -1}$ and $(\lambda,\alpha)\in\mathbb{Z}_{\geq 0}$. We set
\begin{equation}\label{jrdef}
j:=\lceil \frac{(\lambda,\alpha)+1}{k+2}\rceil,\ \ \  r:=j(k+2)- (\lambda,\alpha)-1
\end{equation}
(where  $\lceil x\rceil$  stands for the minimal integer $\geq x$). Then
$N'_{\ft}(\lambda)=0$ if  $r=0$, and 
$$N'_{\ft}(\lambda)=L_{\ft}(\lambda_1)\ \ \text{ for }\ \ \lambda_1=s_{j\delta-\alpha}\circ\lambda=\lambda-r(j\delta-\alpha),\ \ \text{
if }r>0.$$
\end{lem}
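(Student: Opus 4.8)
\emph{Proof plan.} The first step is to reduce the statement to the submodule structure of the affine Verma module $M_{\ft}(\lambda)$. Since $N_{\ft}(\lambda)$ is the maximal locally $\fsl_2$-finite quotient of $M_{\ft}(\lambda)$ and $(\lambda,\alpha)\in\mathbb{Z}_{\geq 0}$, one checks that $f_\alpha^{(\lambda,\alpha)+1}v_\lambda$ is in fact a singular vector for all of $\ft$ (the only relevant commutator $[e_{\delta-\alpha},f_\alpha]$ vanishes, as $\delta-2\alpha$ is not a root), so that $N_{\ft}(\lambda)=M_{\ft}(\lambda)/M_{\ft}(s_\alpha\circ\lambda)$, in agreement with the general formula for $N_{\fg_{\ol 0}}$ recorded earlier. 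As $N_{\ft}(\lambda)$ is a highest weight module, it has simple top $L_{\ft}(\lambda)$ and a unique maximal submodule, and $N'_{\ft}(\lambda)=\operatorname{rad}M_{\ft}(\lambda)/M_{\ft}(s_\alpha\circ\lambda)$. Thus everything is controlled by the radical of $M_{\ft}(\lambda)$, i.e. by the first-level singular vectors of $M_{\ft}(\lambda)$.

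Next I would record the relevant geometry. Writing $x_0:=(\lambda+\rho_{\ft},\alpha)=(\lambda,\alpha)+1\geq 1$ and $K:=k+2\geq 1$, the dot-action of $W(\ft)=\langle s_\alpha,s_{\delta-\alpha}\rangle$ on the coordinate $x=(\,\cdot+\rho_{\ft},\alpha)$ is by the reflections $x\mapsto -x$ and $x\mapsto 2K-x$; hence $W(\ft)$ acts as the infinite dihedral group on $\mathbb{R}$ with mirrors exactly at $K\mathbb{Z}$. The weight $\lambda$ is regular iff $x_0\notin K\mathbb{Z}$, and $r=jK-x_0$ is precisely the distance from $x_0$ to the first mirror above it; so $r=0$ exactly when $\lambda+\rho_{\ft}$ lies on the wall of $s_{j\delta-\alpha}$, and $r>0$ in the regular case. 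Because $\ft$ has dihedral Weyl group, all multiplicities $[M_{\ft}(\mu):L_{\ft}(\mu')]$ are $0$ or $1$ and are governed by the (dihedral) Bruhat order on the linkage class; the submodule lattice is the standard ``infinite staircase'' in the regular case and a chain in the singular case.

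In the singular case $r=0$ I would argue that $\operatorname{rad}M_{\ft}(\lambda)=M_{\ft}(s_\alpha\circ\lambda)$: the dominant representative of the block lies on a \emph{simple} wall (that of $s_\alpha$ or of $s_{\delta-\alpha}$), so the block is a chain and $s_\alpha\circ\lambda$ is the unique cover of $\lambda$; hence $N_{\ft}(\lambda)=L_{\ft}(\lambda)$ and $N'_{\ft}(\lambda)=0$. In the regular case $r>0$, the weight $\lambda$ has exactly two covers, namely $s_\alpha\circ\lambda$ (with $x$-value $-x_0<0$, not $\fsl_2$-dominant) and $\lambda_1=s_{j\delta-\alpha}\circ\lambda=\lambda-r(j\delta-\alpha)$ (with $x$-value $jK+r\geq 1$, $\fsl_2$-dominant), so $\operatorname{rad}M_{\ft}(\lambda)=M_{\ft}(s_\alpha\circ\lambda)+M_{\ft}(\lambda_1)$ and therefore
$$N'_{\ft}(\lambda)\cong M_{\ft}(\lambda_1)/\bigl(M_{\ft}(\lambda_1)\cap M_{\ft}(s_\alpha\circ\lambda)\bigr).$$
Since the two covers are incomparable in Bruhat order, $\lambda_1\not\preceq s_\alpha\circ\lambda$, so $L_{\ft}(\lambda_1)$ is not a composition factor of $M_{\ft}(s_\alpha\circ\lambda)$, whereas every cover of $\lambda_1$ already lies in $M_{\ft}(s_\alpha\circ\lambda)$; this forces the intersection to equal $\operatorname{rad}M_{\ft}(\lambda_1)$ and gives $N'_{\ft}(\lambda)=L_{\ft}(\lambda_1)$.

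The main obstacle is the combinatorial bookkeeping in the infinite dihedral group: identifying the two covers of $\lambda$ and, in particular, checking that the unique $\fsl_2$-dominant cover is exactly $s_{j\delta-\alpha}\circ\lambda=\lambda-r(j\delta-\alpha)$ (by matching $x$-values and the reflection realizing the cover), together with the dichotomy simple-wall/chain versus regular/staircase across all residues and parities of $j$. An equivalent route, closer to the method of the main text, would be to compute $\operatorname{ch}N_{\ft}(\lambda)=\operatorname{ch}M_{\ft}(\lambda)-\operatorname{ch}M_{\ft}(s_\alpha\circ\lambda)$ and run the parabolic Jantzen sum formula, showing that the sum equals $\operatorname{ch}L_{\ft}(\lambda_1)$ for $r>0$ and $0$ for $r=0$; the bookkeeping obstacle is the same.
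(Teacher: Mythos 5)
Your proposal is correct and follows essentially the same route as the paper: both arguments rest on the BGG-type description of the block of $M_{\ft}(\lambda)$ for $\hat{\fsl}_2$ (Verma inclusions governed by the infinite dihedral Bruhat order, with all multiplicities equal to one), the same dichotomy singular ($r=0$, stabilizer nontrivial, chain block) versus regular ($r>0$, staircase block), and the same identification of the second cover of $\lambda$ as $s_{j\delta-\alpha}\circ\lambda=\lambda-r(j\delta-\alpha)$. The differences are only in execution: you compute $\operatorname{rad}M_{\ft}(\lambda)$ as the sum of the Verma modules at the (at most two) covers and finish by an intersection argument, and you organize the bookkeeping via the mirror picture at $(k+2)\mathbb{Z}$, whereas the paper counts composition factors of $N'_{\ft}(\lambda)$ through the set $Y_w$ and explicit reduced-word computations.
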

\begin{proof}
Since $k\in\mathbb{Z}_{\geq -1}$,
the $W(\ft)\circ$-orbit of $\lambda$ contains a unique maximal weight $\lambda'$.
Let $W_0:=\{y\in W(\ft)|\ y\circ \lambda'=\lambda'\}$;
each left coset in $W(\ft)/W_0$ contains a unique longest element; we denote the set of these elements
by $Y$ (one has $Y= W(\ft)$ if $W_0=\{\Id\}$). 

The simple subquotients of $M_{\ft}(\lambda')$ are
 of the form $L(y\circ\lambda')$, $y\in Y$, and
$$M_{\ft} (y_2\circ\lambda)\subset M_{\ft}(y_1\circ \lambda)\ \ \Longleftrightarrow\ \ \ 
[M_{\ft}(y_1\circ\lambda):L_{\ft}(y_2\circ\lambda)]\not=0\ \ \Longleftrightarrow\ \ \ y_2\geq y_1$$
where $\geq$ is given by the Bruhat order in the Coxeter group $W(\ft)$.
Moreover, since all Kazhdan-Lusztig polynomials for $\hat{\fsl}_2$ are ``trivial'' we have
$$[M_{\ft}(y_1\circ\lambda):L_{\ft}(y_2\circ\lambda)]=1 \ \text{ if }\ y_2\geq y_1.$$

The Weyl group $W(\ft)$ is  generated by the reflections $s_i:=s_{\alpha_i}$ for $i=0,1$.
By above, $\lambda=w\circ\lambda'$ for some $w\in Y$ such that $s_1w>w$ and 
$$N_{\ft}(\lambda)=M_{\ft}(\lambda)/M_{\ft}(s_1\circ\lambda),\ \ 
L_{\ft}(\lambda)=N_{\ft}(\lambda)/N'_{\ft}(\lambda).$$ 
Therefore all simple subquotients of $N'_{\ft}(\lambda)$ have multiplicity one and
 $$[N'_{\ft}(\lambda):L_{\ft}(\lambda'')]\not=0\ \ \Longleftrightarrow \ \ \lambda''=y\circ \lambda\ \text{ for }\ 
y\in Y_w:=\{y\in Y|\ y>w,\ y\not\geq s_1w\}.$$

Since $W(\ft)$ is the infinite dihedral group  generated by the reflections $s_0,s_1$, 
one has
$$y>y'\ \ \Longleftrightarrow\ \ \ell(y)>\ell(y'),$$
 where $\ell(y)$ stands for the length of $y$.
Since $s_1w>w$ we have $\ell(s_1w)=\ell(w)+1$  which gives
\begin{equation}\label{Yw}
Y_w=\{y\in Y|\ \ \ell(y)=\ell(w)+1,\ y\not=s_1w\}.\end{equation}

Since $\lambda'$ is maximal in $W\circ \lambda'$ we have
$$(\lambda'+\rho_{\ft},\alpha_i)\geq 0\ \text{ for } i=0,1.$$

Consider the case when $W_0\not=\{\Id\}$. 
By~\cite{Kbook}, Prop. 3.12 the subgroup $W_0$ is generated by simple reflections, so  
 $W_0=\{s_i,\Id\}$ for $i=0$ or $i=1$. This gives  $Y=\{(s_0s_1)^ms_0\}_{m\geq 0}$ for $i=0$ and 
$Y=\{s_1(s_0s_1)^m\}_{m\geq 0}$ for $i=1$. Note that $\ell(y)$ is odd for all $y\in Y$, so~(\ref{Yw}) implies
$Y_w=\emptyset$. Hence $N'_{\ft}(\lambda)=0$.  
Write $w\alpha_i$ as $w\alpha_i=\pm (n\delta\pm\alpha)$ 
for $n\geq 0$. Since $s_i\circ\lambda'=\lambda'$ and
$\lambda=w\circ\lambda'$ we have
$$0=(\lambda'+\rho_{\ft},\alpha_i)=(\lambda+\rho_{\ft},w\alpha_i)
=(\lambda+\rho_{\ft},n\delta\pm\alpha)=n(k+2)\pm ((\lambda,\alpha)+1).$$
Both $n$ and $(\lambda,\alpha)$ are non-negative, so  $w\alpha_i=\pm (n\delta-\alpha)$ .
This implies $n=j$, $r=0$, where 
 $j,r$ are as in~(\ref{jrdef}). Summarizing we obtain
 $$W_0\not=\{\Id\}\ \ \ \Longrightarrow\ \ \  N'_{\ft}(\lambda)=0\ \text{ and } r=0.$$

Now consider the remaining case $W_0=\{\Id\}$. In this case
\begin{equation}\label{lambdasl2}
(\lambda'+\rho_{\ft},\alpha_i)>0\ \text{ for } i=0,1.
\end{equation}
Since   $W_0=\{\Id\}$ we have
$Y=W(\ft)$ so $Y_w=\{y\in W(\ft)|\ \ell(y)=\ell(w)+1,\ y\not=s_1w\}$.

If $w=(s_0s_1)^m$,  then $Y_w=\{ws_0\}$  and $ws_0=s_{w\alpha_0}w$ with $w\alpha_0=(2m+1)\delta-\alpha$.

If $w=(s_0s_1)^ms_0$,  then $Y_w=\{ws_1\}$ and $ws_1=s_{w\alpha}w$ with $w\alpha=(2m+2)\delta-\alpha$.

Setting $j':=\ell(w)+1$  and $\gamma:=j'\delta-\alpha$ we get
$N'_{\ft}(\lambda)\cong L(s_{\gamma}\circ\lambda)$ and $w^{-1}\gamma\in \{\alpha,\alpha_0\}$.

It remains to verify that $j'=j$, $s_{\gamma}\circ\lambda=\lambda-r\gamma$ and $r>0$
where $r,j$ are given by~\Lem{lemsl2}.  For  $n\in\mathbb{Z}$ we have
\begin{equation}\label{ira}
(\lambda'+\rho_{\ft},w^{-1}(n\delta-\alpha))=(\lambda+\rho_{\ft},n\delta-\alpha)=(k+2)n-(\lambda,\alpha)-1.
\end{equation}
Thus, by definition,  $j$ is the minimal integer satisfying  $(\lambda'+\rho_{\ft},w^{-1}(j\delta-\alpha))\geq 0$.
By~(\ref{lambdasl2})  we have  $(\lambda'+\rho_{\ft},\beta)>0$  for all  $\beta\in\Delta^+(\ft)$. Therefore
 $j$ is the minimal integer such that $w^{-1}(j\delta-\alpha)\in\Delta^+(\ft)$.
One has
$$w^{-1}(n\delta-\alpha)=w^{-1}(\gamma-(j'-n)\delta)=w^{-1}\gamma-(j'-n)\delta.$$
Since $w^{-1}\gamma\in \{\alpha,\alpha_0\}$, we conclude that $w^{-1}(n\delta-\alpha)\in\Delta^+(\ft)$
is equivalent to $n\geq j'$. Hence $j=j'$.
 By~(\ref{ira}), $s_{\gamma}\circ\lambda=\lambda-r\gamma$. 
Since $N'_{\ft}(\lambda)$ is a  proper submodule of $N_{\ft}(\lambda)$ we have $r>0$.
This completes the proof.
\end{proof}

\subsubsection{}
Set $\Lambda_1:=\Lambda_0+\frac{\alpha}{2}$ (note that $\Lambda_0,\Lambda_1$ are the fundamental weights
for $\alpha_0,\alpha_1)$.

\begin{cor}{corsl2need}
Under the assumptions of~\Lem{lemsl2} we have $N'_{\ft}=L_{\ft}(\lambda_1)$ where 
$\lambda=\lambda_1$ if $(\lambda,\alpha)\leq k+2$ and 
$(\lambda-\lambda_1,\Lambda_1)<(\lambda,\alpha)$ if $(\lambda,\alpha)>k+2$.
\end{cor}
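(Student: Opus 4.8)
The plan is to read the structural statement $N'_{\ft}=L_{\ft}(\lambda_1)$ directly off~\Lem{lemsl2} (the case $r>0$; when $r=0$ one has $N'_{\ft}=0$), and then to establish the asserted inequality by a one-line computation of the bilinear form followed by a case split on the integer $j$ of~(\ref{jrdef}). So the only real content beyond the Lemma is the comparison of $(\lambda-\lambda_1,\Lambda_1)$ with $(\lambda,\alpha)$.

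First I would record the values of $(\,\cdot\,,\Lambda_1)$ on the relevant roots. From $\Lambda_1=\Lambda_0+\frac{\alpha}{2}$ and the standard $\hat{\fsl}_2$ pairings $(\Lambda_0,\delta)=1$, $(\Lambda_0,\alpha)=0$, $(\alpha,\alpha)=2$, $(\delta,\alpha)=0$, one gets $(\delta,\Lambda_1)=1$ and $(\alpha,\Lambda_1)=1$, hence $(\delta-\alpha,\Lambda_1)=0$. Since~\Lem{lemsl2} gives $\lambda-\lambda_1=r(j\delta-\alpha)$, this yields
$$(\lambda-\lambda_1,\Lambda_1)=r\bigl(j(\delta,\Lambda_1)-(\alpha,\Lambda_1)\bigr)=r(j-1).$$

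Next I would split on the value $j=\lceil\tfrac{(\lambda,\alpha)+1}{k+2}\rceil$. The case $j=1$ occurs exactly when $(\lambda,\alpha)\le k+1$; there $\lambda-\lambda_1=r(\delta-\alpha)$ is orthogonal to $\Lambda_1$, so $(\lambda-\lambda_1,\Lambda_1)=0$ and $\lambda_1=s_{\delta-\alpha}\circ\lambda$, which is the first alternative of the statement. In the case $j\ge 2$ (that is, $(\lambda,\alpha)\ge k+2$) I must prove $r(j-1)<(\lambda,\alpha)$. Substituting $(\lambda,\alpha)=j(k+2)-r-1$ from~(\ref{jrdef}), the target inequality is equivalent to $rj<j(k+2)-1$, i.e. to $r<(k+2)-\frac{1}{j}$.

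The one genuine input — and the step I expect to be the crux — is the sharp bound $r\le k+1$, which is exactly what forces strictness at the boundary. It comes for free from the minimality encoded in the ceiling: the defining inequality $(j-1)(k+2)<(\lambda,\alpha)+1$ gives $r=j(k+2)-(\lambda,\alpha)-1<k+2$, hence $r\le k+1$. Since $k+1<(k+2)-\frac{1}{j}$ for every $j\ge 2$, we obtain $r<(k+2)-\frac{1}{j}$ and therefore $(\lambda-\lambda_1,\Lambda_1)=r(j-1)<(\lambda,\alpha)$, completing the argument. Everything past the computation of the form is elementary arithmetic; the only point requiring care is that the inequality is strict precisely because $r$ can never reach $k+2$, so the borderline $j\ge 2$ weights are controlled without any exceptional analysis.
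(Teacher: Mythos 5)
Your proof is correct and follows essentially the same route as the paper's: both arguments reduce everything to the identity $(\lambda-\lambda_1,\Lambda_1)=r(j-1)$ and then exploit the ceiling bound $(j-1)(k+2)<(\lambda,\alpha)+1$, which yields $r\le k+1$, to obtain the strict inequality by elementary arithmetic. The one substantive difference is where the cases are cut: you split at $j=1$ versus $j\ge 2$, i.e.\ at $(\lambda,\alpha)\le k+1$ versus $(\lambda,\alpha)\ge k+2$, while the paper splits at $(\lambda,\alpha)\le k+2$ versus $(\lambda,\alpha)>k+2$ and asserts $r(j-1)=0$ in its first case. Your cut is in fact the correct one: at the boundary $(\lambda,\alpha)=k+2$ with $k\ge 0$ one has $j=2$ and $r=k+1$, so $r(j-1)=k+1\ne 0$ and the paper's first case fails there, whereas your second case covers it and gives $r(j-1)=k+1<k+2=(\lambda,\alpha)$, which is exactly what the application in~\Lem{lemcheck!} requires. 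One looseness you share with the paper: when $j=1$ and $r>0$ one has $(\lambda-\lambda_1,\Lambda_1)=0$ but $\lambda_1=\lambda-r(\delta-\alpha)\ne\lambda$, so neither your argument nor the paper's literally establishes the clause ``$\lambda=\lambda_1$'' of~\Cor{corsl2need}; both prove only the vanishing of the pairing, which is the form in which the corollary is actually used.
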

\begin{proof}
One has $j-1<\frac{(\lambda,\alpha)+1}{k+2}$ and $0\leq r\leq k+1$.
If $(\lambda,\alpha)\leq k+2$, then  $r(j-1)=0$.

If $(\lambda,\alpha)>k+2$, then
$$(\lambda-\lambda_1,\Lambda_1)=r(j\delta-\alpha, \Lambda_0+\frac{\alpha}{2})=r(j-1)<(k+1)\frac{(\lambda,\alpha)+1}{k+2}
=(\lambda,\alpha)+1-\frac{(\lambda,\alpha)+1}{k+2}.$$
Since  $(\lambda,\alpha)>k+2$ we have $\frac{(\lambda,\alpha)+1}{k+2}>1$, so  $(\lambda-\lambda_1,\Lambda_1)<(\lambda,\alpha)$.
\end{proof}


\end{document}